\title{The orbital counting problem for hyperconvex representations}
\author{A. Sambarino}
\date{}
\newcommand{\R}{\mathbb{R}}
\newcommand{\N}{\mathbb{N}}
\renewcommand{\P}{\mathbb{P}}
\renewcommand{\/}{\backslash}
\renewcommand{\k}{\kappa}
\newcommand{\eps}{\varepsilon}
\newcommand{\vacio}{\emptyset}
\newcommand{\G}{\Gamma}
\renewcommand{\l}{\ell}
\newcommand{\<}{\left<}
\renewcommand{\>}{\right>}
\newcommand{\E}{\Sigma}
\newcommand{\scr}{\mathscr}
\newcommand{\g}{\gamma}
\renewcommand{\a}{\alpha}
\newcommand{\z}{\zeta}
\newcommand{\w}{\widetilde}
\newcommand{\bord}{\partial_\infty}
\newcommand{\vo}[1]{\overline{#1}}
\newcommand{\sub}[1]{\underline{#1}}
\newcommand{\ta}{\Theta}
\newcommand{\vect}{\beta}
\newcommand{\cone}{\scr L}
\newcommand{\Pat}{\chi}
\newcommand{\Pats}{\Omega}
\newcommand{\bus}{\sigma}
\newcommand{\buss}{\sigma}
\newcommand{\grupo}{\Delta}
\newcommand{\om}{\omega}
\newcommand{\LL}{\lambda}
\newcommand{\posgen}{\scr F^{(2)}}
\newcommand{\vov}{}
\newcommand{\p}{{\mathsf{f}}}
\newcommand{\zz}{{\mathsf{Z}}}
\newcommand{\wk}{\check}
\newcommand{\BM}{\w{\Pat}_{\ta}}
\newcommand{\Gromov}[2]{[#1,#2]}
\newcommand{\dens}{\mu}
\newcommand{\simbolo}[1]{\stackrel{#1}{\sim}}
\newcommand{\mm}[1]{[#1]}
\newcommand{\F}{\overline{X}_F}
\newcommand{\uu}[1]{{\mathsf{u}}_{#1}}
\newcommand{\II}{I}
\newcommand{\ca}{c}
\renewcommand{\t}{\theta}
\renewcommand{\aa}{\underline{a}}
\newcommand{\UU}{{u_{\rho(\G)}}}
\newcommand{\TT}{{\ta_{\rho}}}
\newcommand{\Gr}{\mathscr G}
\renewcommand{\1}{\mathbf 1}
\renewcommand{\L}{\Lambda}
\renewcommand{\cal}{\mathcal}
\DeclareMathOperator{\Li}{L}
\DeclareMathOperator{\rank}{rank}
\DeclareMathOperator{\ii}{i}
\DeclareMathOperator{\tran}{tr}
\DeclareMathOperator{\inte}{int}
\DeclareMathOperator{\vol}{vol}
\DeclareMathOperator{\SL}{SL}
\DeclareMathOperator{\PSL}{PSL}
\DeclareMathOperator{\Leb}{Leb}
\DeclareMathOperator{\clase}{C}
\DeclareMathOperator{\id}{id}
\DeclareMathOperator{\PGL}{PGL}
\DeclareMathOperator{\lie}{Lie}
\newtheorem*{teos}{Theorem}
\newtheorem*{teo1}{Theorem A}
\newtheorem*{teoB}{Theorem B}
\newtheorem{teo}{Theorem}[section]
\newtheorem{cor}[teo]{Corollary}
\newtheorem*{cors}{Corollary}
\newtheorem{lema}[teo]{Lemma}
\newtheorem{prop}[teo]{Proposition}
\theoremstyle{definition}
\newtheorem{afi}[teo]{Claim}
\newtheorem{defi}[teo]{Definition}
\newtheorem{obs}[teo]{Remark}
\theoremstyle{remark}
\let\oldsqrt\sqrt
\def\sqrt{\mathpalette\DHLhksqrt}
\def\DHLhksqrt#1#2{%
\setbox0=\hbox{$#1\oldsqrt{#2\,}$}\dimen0=\ht0
\advance\dimen0-0.2\ht0
\setbox2=\hbox{\vrule height\ht0 depth -\dimen0}%
{\box0\lower0.4pt\box2}}
\begin{document}
\maketitle

\begin{abstract}We give a precise counting result on the symmetric space of a noncompact real algebraic semisimple group $G,$ for a class of discrete subgroups of $G$ that contains, for example, representations of a surface group on $\PSL(2,\R)\times\PSL(2,\R),$ induced by choosing two points on the Teichm\"uller space of the surface; and representations on the Hitchin component of $\PSL(d,\R).$ We also prove a mixing property for the Weyl chamber flow in this setting.
\end{abstract}


\section{Introduction}

The \emph{Orbital Counting Problem} is: given a discrete subgroup $\grupo$ of a connected noncompact real algebraic semisimple Lie group $G,$ find an asymptotic for the growth of $$\#\{g\in\grupo:d_X(o,g\cdot o)\leq t\}$$ as $t\to \infty,$ where $o=[K]$ is a basepoint on $X=G/K,$ the symmetric space of $G,$ endowed with a $G$-invariant Riemannian metric.

When the group $\grupo$ is a lattice, this problem has been studied by Eskin-McMullen \cite{esk}. They prove that the number of points in $\grupo\cdot o\cap B(o,t),$ is equivalent (modulo a constant) to the volume $\vol(B(o,t))$ of the ball of radius $t.$ Hence, the asymptotic has a polynomial term together with an exponential term. Similar results have been obtained by Duke-Rudnick-Sarnak \cite{drs}.

We will hence focus on subgroups of infinite covolume. An important tool for such groups, in negative curvature, is the limit set of the group on the visual boundary of the space in consideration. On higher rank, it turns out to be more useful to consider the Furstenberg boundary.

Let $P$ be a minimal parabolic subgroup of $G,$ and denote by $\scr F_G=\scr F=G/P$ the \emph{Furstenberg boundary} of $X.$ Benoist \cite{limite} has shown that the action of $\grupo$ on $\scr F$ has a smallest closed invariant set, called the \emph{limit set of $\grupo$ on $\scr F$}, and denoted by $\Li_\grupo.$ 

The limit set is well understood for Schottky groups. These are finitely generated free subgroups of $G,$ for which one has a good control on the relative position of the fixed points on $\scr F$ of the free generators, together with nice contraction properties.

This precise information allows Quint \cite{quint4} to build an equivariant continuous map, from the boundary at infinity of the group into $\scr F.$ The limit set is hence identified with a subshift of finite type. Quint \cite{quint4} uses the Thermodynamic Formalism on this subshift, to obtain an exponential equivalence for the orbital counting problem.

This work consists in studying the orbital counting problem, for a class of subgroups called hyperconvex representations, which we will now define.

The product $\scr F\times\scr F$ has a unique open $G$-orbit, denoted by $\posgen.$ For example, when $G=\PGL(d,\R),$ the space $\scr F$ is the space of complete flags of $\R^d,$ i.e. families of subspaces $\{V_i\}_{i=0}^d$ such that $V_i\subset V_{i+1}$ and $\dim V_i=i;$ and the set $\posgen$ is the set of flags in general position, i.e. pairs $(\{V_i\},\{W_i\})$ such that, for every $i,$ one has $$V_i\oplus W_{d-i}=\R^d.$$

Let $\G$ be the fundamental group of a closed connected negatively curved Riemannian manifold (for any basepoint).

\begin{defi}  We say that a representation $\rho:\G\to G$ is \emph{hyperconvex}, if there exists a H\"older-continuous $\rho$-equivariant map $\z:\bord\G\to\scr F,$ such that the pair $(\z(x),\z(y))$ belongs to $\posgen$ whenever $x,y\in\bord\G$ are distinct.
\end{defi}

If $G$ is a rank 1 simple group, then its Furstenberg boundary is the visual boundary of the symmetric space, and the open orbit $\posgen$ is $$\{(x,y)\in\scr F\times\scr F: x\neq y\}.$$ The classical Morse's Lemma implies thus, that a quasi-isometric embeding $\G\to G,$ is a hyperconvex representation.

Hyperconvex representations where introduced by Labourie \cite{labourie}, in his study of the Hitchin component. Consider a closed connected oriented surface $\E,$ of genus $g\geq2,$ and say that a representation $\pi_1(\E)\to\PSL(d,\R)$ is \emph{Fuchsian}, if it factors as $$\pi_1(\E)\to\PSL(2,\R)\to\PSL(d,\R),$$ where $\PSL(2,\R)\to\PSL(d,\R)$ is induced by the irreducible linear action of $\SL(2,\R)$ on $\R^d$ (unique modulo conjugation by $\SL(d,\R)$), and $\pi_1(\E)\to\PSL(2,\R)$ is discrete and faithful. A \emph{Hitchin component} of $\PSL(d,\R),$ is a connected component of the space $\hom(\pi_1(\E),\PSL(d,\R)),$ containing a Fuchsian representation.

\begin{teos}[Labourie \cite{labourie}]A representation in a Hitchin component of $\PSL(d,\R)$ is hyperconvex.
\end{teos}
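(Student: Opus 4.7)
The plan is to show that the set of hyperconvex representations is both open and closed in the Hitchin component. Since the Fuchsian locus lies in this set and the Hitchin component is connected by definition, this forces every representation in the component to be hyperconvex. For the Fuchsian case, identify $\R^d$ with the irreducible $\SL(2,\R)$-module of binary forms of degree $d-1$. If $\rho_0:\pi_1(\E)\to\PSL(2,\R)\hookrightarrow\PSL(d,\R)$ is Fuchsian, compose the standard H\"older boundary map $\bord\pi_1(\E)\to\R P^1$ with the Veronese map $[v]\mapsto\z_0([v])$ sending a line to its osculating flag; a Vandermonde-type computation shows that two such flags at distinct points are in general position, giving hyperconvexity of $\rho_0$.

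The heart of the proof is openness. Given a hyperconvex $\rho$ with boundary map $\z$, I would encode $\z$ as an equivariant section of the $\posgen$-bundle over the unit tangent bundle of $\E$; under the geodesic flow this section exhibits hyperbolic behavior, attracting along one flag coordinate and repelling along the other in suitable transverse directions on $\scr F$. Structural stability of such equivariant hyperbolic sections, proved via a contraction argument on a space of H\"older sections near $\z$, would produce for each representation $\rho'$ close enough to $\rho$ a unique nearby equivariant map $\z_{\rho'}$, whose image remains transverse by continuity. The main obstacle is precisely this structural stability step, which requires uniform cone and contraction estimates transported along the flow; this is the technical core of Labourie's Anosov property for Hitchin representations.

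Closedness within the Hitchin component is more delicate: as $\rho_n\to\rho$ with $\rho_n$ hyperconvex, the maps $\z_n$ are equicontinuous from uniform H\"older bounds, so one extracts a limiting equivariant map $\z$; the remaining issue is that transversality might degenerate in the limit. I would rule this out by exploiting the Frenet-type eigenvalue structure for Hitchin representations, whose proximal and strongly irreducible behavior along the Hitchin component prevents two distinct boundary points from being sent to non-transverse flags. Combining the Fuchsian base case with openness, closedness, and connectedness of the Hitchin component concludes the proof.
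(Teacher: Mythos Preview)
The paper does not give a proof of this statement: it is quoted in the Introduction as a theorem of Labourie, with a reference to \cite{labourie}, and is used only to motivate the class of hyperconvex representations. There is therefore no ``paper's proof'' to compare your proposal against.

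As an outline of Labourie's actual argument, your sketch has the right architecture (Fuchsian base case via the Veronese flag curve, openness via structural stability of the Anosov/hyperbolic section, then connectedness of the Hitchin component), and you correctly identify the stability step as the technical core. However, your closedness step contains a genuine gap. You propose to prevent degeneration of transversality in the limit by ``exploiting the Frenet-type eigenvalue structure for Hitchin representations, whose proximal and strongly irreducible behavior along the Hitchin component prevents two distinct boundary points from being sent to non-transverse flags.'' But the Frenet property and the simple eigenvalue spectrum are \emph{consequences} of hyperconvexity (equivalently, of the Anosov property), not independently established facts about an arbitrary representation in the Hitchin component; invoking them at this stage is circular. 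Equicontinuity of the $\zeta_n$ alone does not give uniform transversality, and without an a priori eigenvalue gap there is no mechanism in your argument to rule out that the limit map sends two distinct boundary points to non-transverse flags. In Labourie's paper, closedness within the Hitchin component is obtained by a different route: one shows that the Anosov property, once established on an open set, persists along paths in the Hitchin component because the relevant contraction/expansion rates are controlled uniformly (via compactness of the unit tangent bundle and continuity of the flat bundle data), so that the hyperbolic splitting cannot collapse. Filling this in requires real work beyond extracting a limit of the $\zeta_n$.
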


Finally, recall that if $G$ and $H$ are noncompact real algebraic semisimple groups, then the Furstenberg boundary of $G\times H,$ is $\scr F_G\times\scr F_H.$ Hence, if $\rho:\G\to G$ and $\eta:\G\to H$ are hyperconvex representations, so is the product $\rho\times \eta:\G\to G\times H.$

Denote by $C(Z),$ the Banach space of real continuous functions on a compact space $Z$ (with the uniform topology), and by $C^*(Z)$ its topological dual. Denote by $\F$ the Furstenberg compactification of $X$ (see Section \ref{section:bowenmargulis}). 

\begin{teo1}[See Section \ref{section:bowenmargulis}] Let $\rho:\G\to G$ be a Zariski-dense hyperconvex representation. Then there exist $h,c>0,$ and a probability measure $\mu$ on $\F,$ such that $$ce^{-ht}\sum_{\g\in\G-\{e\}:d_X(o,\rho(\g)\cdot o)\leq t}\delta_{\rho(\g)\cdot o} \otimes \delta_{\rho(\g^{-1})\cdot o} \to\mu\otimes\mu,$$ for the weak-star convergence on $C^*(\F^2),$ as $t\to\infty.$
\end{teo1}

Considering the constant function equal to 1, one obtains the following corollary.

\begin{cors}Let $\rho:\G\to G$ be a Zariski-dense hyperconvex representation. Then there exist $h,c>0,$ such that $$ce^{-ht}\#\{\g\in\G:d_X(o,\rho(\g)\cdot o)\leq t\}\to 1,$$ as $t\to\infty.$
\end{cors}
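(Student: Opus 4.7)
The plan is to derive the corollary directly from Theorem A by pairing the asserted weak-$*$ convergence against the constant function $\mathbf{1}\in C(\F^2).$ Since $\F$ is the Furstenberg compactification of $X$ and is in particular compact, the constant function equal to $1$ on $\F^2$ is a valid continuous test function in $C(\F^2),$ so the weak-$*$ convergence statement in $C^*(\F^2)$ may be evaluated against it.

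Evaluating the left-hand side of Theorem A at $\mathbf{1}$ gives
$$ce^{-ht}\sum_{\g\in\G-\{e\}:d_X(o,\rho(\g)\cdot o)\leq t}\bigl(\delta_{\rho(\g)\cdot o}\otimes\delta_{\rho(\g^{-1})\cdot o}\bigr)(\mathbf{1})=ce^{-ht}\#\{\g\in\G-\{e\}:d_X(o,\rho(\g)\cdot o)\leq t\},$$
because each term $\delta_{\rho(\g)\cdot o}\otimes\delta_{\rho(\g^{-1})\cdot o}$ is a probability measure. The right-hand side evaluates to $(\mu\otimes\mu)(\F^2)=\mu(\F)^2=1,$ since $\mu$ is a probability measure on $\F$ by Theorem A. The identity element $\g=e$ contributes exactly $1$ to the count appearing in the corollary, and after multiplication by $ce^{-ht}$ this extra term vanishes as $t\to\infty,$ so adding it in does not affect the limit.

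Combining these observations would yield
$$ce^{-ht}\#\{\g\in\G:d_X(o,\rho(\g)\cdot o)\leq t\}\to 1,$$
which is precisely the statement of the corollary. There is no genuine obstacle at this stage of the argument: all of the analytic, dynamical, and geometric difficulty has been absorbed into Theorem A, and the corollary is extracted merely by pairing the equidistribution statement with the trivial test function. The only point that requires justification is that $\mathbf{1}\in C(\F^2),$ which is immediate from the compactness of $\F,$ together with the elementary remark that replacing $\G-\{e\}$ by $\G$ perturbs the count by a uniformly bounded quantity.
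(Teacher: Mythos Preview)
Your argument is correct and is exactly the approach indicated in the paper, which simply remarks that ``considering the constant function equal to 1, one obtains the following corollary.'' You have faithfully filled in the routine details of that one-line observation.
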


The exponential growth rate $h$ in Theorem A is explicit: it is the topological entropy of a natural flow we construct, associated to the representation $\rho.$ On the contrary, not much information is known about the constant $c.$

As first shown by Margulis \cite{margulistesis} in negative curvature, in order to obtain a counting theorem one usually proves a mixing property of a well chosen dynamical system. In compact manifolds with negative curvature, the geodesic flow plays this role. In infinite covolume, for example for convex cocompact groups, one should restrict the geodesic flow to its nonwandering set. When $\grupo$ is a lattice in higher rank, Eskin-McMullen \cite{esk} use the mixing property of the Weyl chamber flow, to prove the counting result previously mentioned. 

Let $\tau$ be the Cartan involution on $\frak g=\lie(G),$ whose fixed point set is the Lie algebra of $K.$ Consider $\frak p=\{v\in\frak g: \tau v=-v\}$ and $\frak a,$ a maximal abelian subspace contained in $\frak p.$ Denote by $\frak a^+$ a closed Weyl chamber, and $M$ the centralizer of $\exp(\frak a)$ on $K.$ The \emph{Weyl chamber flow} is the right action by translations of $\exp(\frak a)$ in $$\grupo\/G/M.$$ When $\grupo$ is a lattice on $G,$ the mixing property of this action is due to Howe-Moore \cite{hw}.

In this article, we prove a mixing property of the Weyl chamber flow for hyperconvex representations. Before stating the result, let us recall the Patterson-Sullivan Theory on higher rank.

Consider a $G$-invariant Riemannian metric in $X,$ and $\|\ \|$ the induced Euclidean norm on $\frak a,$ invariant under the Weyl group. Consider the Cartan decomposition $G=K\exp(\frak a^+)K,$ and $a:G\to\frak a^+$ the Cartan projection, then for every $g\in G,$ one has $\|a(g)\|=d_X([K],g[K]).$ Hence, one is interested in understanding the growth of $$\#\{g\in\grupo:\|a(g)\|\leq t\},$$ as $t\to\infty.$ Given an open cone $\scr C$ in $\frak a^+,$ consider the exponential growth rate $$h_{\scr C}= \limsup_{s\to \infty} \frac{\log \#\{g\in \grupo: a(g)\in \scr C,\;\|a(g)\|\leq s\}}{s}.$$ The \emph{growth indicator} of $\grupo,$ introduced by Quint \cite{quint2}, is the map $\psi_\grupo:\frak a\to\R\cup\{-\infty\},$ defined by $$\psi_\grupo(v)=\|v\|\inf h_{\scr C},$$ where the greatest lower bound is taken over all open cones containing $v.$ Remark that $\psi_\grupo$ is homogeneous.

Benoist \cite{limite} has introduced the \emph{limit cone} $\cone_\grupo$ of $\grupo,$ as the closed cone in $\frak a^+$ generated by $\{\lambda(g):g\in\grupo\},$ where $\lambda:G\to\frak a ^+$ is the Jordan projection. Quint \cite{quint2} proves the following theorem. 

\begin{teo}[Quint \cite{quint2}]\label{teo:teoquint} Let $\grupo$ be a Zariski-dense discrete subgroup of $G.$ Then $\psi_\grupo$ is concave, upper semi-continuous and the space $$\{v\in\frak a:\psi_\grupo(v)>-\infty\},$$ is the limit cone $\cone_\grupo.$ Moreover $\psi_\grupo$ is nonnegative on $\cone_\grupo,$ and positive on its interior.
\end{teo}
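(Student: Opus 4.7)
The plan is to establish the four assertions---upper semi-continuity, identification of the finiteness locus of $\psi_\grupo$ with $\cone_\grupo$, nonnegativity on $\cone_\grupo$ with strict positivity on $\inte\cone_\grupo$, and concavity---in that order, since each step feeds into the next. The main obstacle is concavity.

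Upper semi-continuity is immediate from the definition. If $v_n\to v$, any open cone $\scr C$ containing $v$ eventually contains $v_n$, hence $\psi_\grupo(v_n)/\|v_n\|\le h_{\scr C}$ for $n$ large; taking $\limsup_n$ and then the infimum over $\scr C\ni v$ yields $\limsup_n\psi_\grupo(v_n)/\|v_n\|\le\psi_\grupo(v)/\|v\|$. For the finiteness locus, when $v\notin\cone_\grupo$ closedness of $\cone_\grupo$ provides an open cone $\scr C\ni v$ disjoint from $\cone_\grupo$; by Benoist's theorem that the accumulation directions of the Cartan projection coincide with $\cone_\grupo$, the set $\{g\in\grupo:a(g)\in\scr C\}$ is bounded, forcing $h_{\scr C}=-\infty$ and $\psi_\grupo(v)=-\infty$. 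Finiteness of $\psi_\grupo$ on $\cone_\grupo$ is controlled trivially by the total exponential rate of the full orbit.

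For strict positivity on $\inte\cone_\grupo$ I would invoke Benoist's density theorem---the Jordan projections $\lambda(g)$, $g\in\grupo$, are dense in $\cone_\grupo$---together with the ping-pong lemma for Zariski-dense subgroups. Fix $v\in\inte\cone_\grupo$ and choose loxodromic $g_1,\dots,g_k\in\grupo$ whose Jordan projections approximate independent rays spanning a neighborhood of $v$, with attracting and repelling flags pairwise in general position. Large powers $g_i^N$ then generate a free Schottky subsemigroup of $\grupo$, and the standard transfer estimate places $a(g_{i_1}^N\cdots g_{i_m}^N)$ within bounded distance of $\sum_j\lambda(g_{i_j}^N)$, so the $k^m$ distinct words of length $m$ all have Cartan projection in a prescribed narrow open cone $\scr C\ni v$, with norms growing linearly in $m$. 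This gives $h_{\scr C}>0$ and hence $\psi_\grupo(v)>0$; nonnegativity on the rest of $\cone_\grupo$ is then recovered from the upper semi-continuity of step one.

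Concavity is the main point, and by homogeneity reduces to the superadditive inequality $\psi_\grupo(v_1+v_2)\ge\psi_\grupo(v_1)+\psi_\grupo(v_2)$. The strategy is to combine exponentially many elements with Cartan projection near $v_1$ with exponentially many near $v_2$ so as to produce exponentially many genuinely distinct products whose Cartan projection clusters near $v_1+v_2$. Concretely, fix $c_i<\psi_\grupo(v_i)/\|v_i\|$ so that some cones $\scr C_i\ni v_i$ satisfy $h_{\scr C_i}\ge c_i$, and pick auxiliary loxodromic $g_1,g_2\in\grupo$ in general position (available by Benoist's density and Zariski density) whose high powers $g_i^N$ act as Schottky separators. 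Products of the form $g_1^N h_1 g_2^N h_2$, with $h_i$ ranging over the $\scr C_i$-bucket at scale $s_i=t\|v_i\|$, are pairwise distinct by ping-pong and, by the transfer estimate, have Cartan projection within bounded distance of $t(v_1+v_2)$. The count is at least $e^{t(c_1\|v_1\|+c_2\|v_2\|)}$ at norm $\approx t\|v_1+v_2\|$, yielding $h_{\scr C}\ge(c_1\|v_1\|+c_2\|v_2\|)/\|v_1+v_2\|$ for any $\scr C$ about $v_1+v_2$, hence $\psi_\grupo(v_1+v_2)\ge c_1\|v_1\|+c_2\|v_2\|$. Letting $c_i\nearrow\psi_\grupo(v_i)/\|v_i\|$ completes concavity on $\inte\cone_\grupo$, and extension to the boundary and then to all of $\frak a$ with value $-\infty$ outside $\cone_\grupo$ follows from the upper semi-continuity already established.
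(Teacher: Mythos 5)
The paper does not actually prove this theorem: it is stated as a citation to Quint \cite{quint2} and no argument is supplied, so there is no in-paper proof to compare your sketch against. Judged on its own terms, the overall strategy you describe (Benoist's identification of $\cone_\grupo$ with the accumulation directions of $a(\grupo)$, Schottky ping-pong subsemigroups, and the Benoist transfer estimate for Cartan projections of products) is indeed the one that underlies Quint's proof, so the route is plausible; but two steps have genuine gaps.

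In the finiteness-locus step you infer $h_{\scr C}=-\infty$ from boundedness of $\{g\in\grupo:a(g)\in\scr C\}$. With the definition used here this is wrong: a finite but nonempty set of Cartan projections in $\scr C$ gives a count stabilizing at some $N\ge 1$, hence $\log N/s\to 0$ and $h_{\scr C}=0$, not $-\infty$. To obtain $\psi_\grupo(v)=-\infty$ for $v\notin\cone_\grupo$ you need an open cone $\scr C\ni v$ with $a(\grupo)\cap\scr C$ actually empty, which calls for an extra argument ruling out (or absorbing) sporadic $a(g)$ close to the ray $\R_+v$; ``bounded'' is not enough. In the concavity step, the pairwise distinctness of the products $g_1^Nh_1g_2^Nh_2$ is asserted ``by ping-pong,'' but the $h_i$ range over arbitrary orbit elements in the buckets, not over a prepared Schottky family, so one must first pass to a positive-proportion subfamily that is uniformly transverse to the fixed separators $g_1^N,g_2^N$ before either the injectivity or the Benoist additivity estimate applies. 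One also has to account for the bounded but nonzero shift of the Cartan projection contributed by the separators when choosing the target cone around $v_1+v_2$. These discarding and transversality lemmas carry the quantitative weight of Quint's argument and are not present in your sketch.
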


The growth indicator plays the role, in higher rank, of the critical exponent in negative curvature. Denote by $P$ the minimal parabolic group of $G,$ associated to the choice of $\frak a^+.$ The set $\scr F=G/P$ is $K$-homogeneous, the group $M$ is the stabilizer in $K$ of $[P]\in\scr F.$ The \emph{Busemann cocycle} $\bus:G\times\scr F\to\frak a,$ is defined to verify the equation $$gk=l\exp(\bus(g,kM))n,$$ for every $g\in G$ and $k\in K,$ using Iwasawa's decomposition of $G=K\exp(\frak a) N,$ where $N$ is the unipotent radical of $P.$ 

\begin{teo}[Quint \cite{quint1}]\label{teo:JF-PS} Let $\grupo$ be a Zariski-dense discrete subgroup of $G.$ Then for each linear form $\varphi,$ tangent to $\psi_\grupo$ in a direction in the interior of $\cone_\grupo,$ there exists a probability measure $\nu_\varphi$ on $\scr F,$ supported on $\Li_\grupo,$ such that for every $g\in\grupo$ one has, $$\frac{dg_*\nu_\varphi}{d\nu_\varphi}(x)=e^{-\varphi(\bus(g^{-1},x))}.$$
\end{teo}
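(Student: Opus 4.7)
The plan is the classical Patterson--Sullivan construction of conformal densities, transplanted to the higher-rank setting via the Busemann cocycle $\bus$. Fix a basepoint $\xi_0 = [P] \in \scr F$. By Theorem~\ref{teo:teoquint}, the growth indicator $\psi_\grupo$ is concave on $\cone_\grupo$, so any linear form $\varphi$ tangent to $\psi_\grupo$ at an interior direction $v \in \inte(\cone_\grupo)$ satisfies $\varphi \geq \psi_\grupo$ on $\cone_\grupo$ with equality at $v$; this is the correct weight for the Poincar\'e series below.

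Introduce
$$\mathcal{P}_\varphi(s) = \sum_{g \in \grupo} e^{-s \varphi(a(g))},$$
and show that $s=1$ is its critical exponent. Convergence for $s>1$ follows at once from $\varphi \geq \psi_\grupo$ and the definition of $\psi_\grupo$; divergence at $s=1$ requires a lower bound on the number of $g \in \grupo$ with $a(g)$ in a narrow cone about $v$, which is provided by the positivity of $\psi_\grupo$ in the interior of $\cone_\grupo$ (Theorem~\ref{teo:teoquint}). Should the series happen to converge at $s=1$, insert Patterson's slowly-varying modifier $h(\|a(g)\|)$ to force divergence without altering the critical value. Then, for $s>1$, form the probability measures
$$\mu_s = \frac{1}{\mathcal{P}_\varphi(s)} \sum_{g \in \grupo} e^{-s \varphi(a(g))} \, \delta_{g \cdot \xi_0}$$
on the compact boundary $\scr F,$ and let $\nu_\varphi$ be any weak-$*$ accumulation point as $s \to 1^+$. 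Divergence at $s=1$ forces the mass to concentrate on the closure of $\grupo \cdot \xi_0$, hence on $\Li_\grupo$.

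The transformation rule is the crux. Reindexing $\eta = g\g$ in $g_* \mu_s$ gives
$$\frac{dg_*\mu_s}{d\mu_s}(\eta \cdot \xi_0) = e^{-s \left[\varphi(a(g^{-1}\eta)) - \varphi(a(\eta))\right]},$$
so it suffices to establish the higher-rank Busemann asymptotic
$$\varphi(a(g^{-1}\eta)) - \varphi(a(\eta)) = \varphi(\bus(g^{-1}, \eta \cdot \xi_0)) + o(1),$$
uniformly as $\eta \cdot \xi_0$ approaches an interior point of the limit set and for $g$ in a fixed compact set. This is the main obstacle: in rank one it reduces to the elementary identity $d(o, g^{-1}\eta\cdot o) - d(o, \eta \cdot o) \to -B_\xi(o, g\cdot o)$, but in higher rank the Cartan and Iwasawa projections of $g^{-1}\eta$ diverge from one another unless $\eta \cdot \xi_0$ is sufficiently generic with respect to the geometry of $g^{-1}$. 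Controlling the error uses Zariski density of $\grupo$ together with the fact that limit points associated to interior directions of $\cone_\grupo$ lie in general position in $\scr F$. Granted this asymptotic, passing to the limit $s \to 1^+$ in $\mu_s$ yields $dg_* \nu_\varphi / d\nu_\varphi = e^{-\varphi(\bus(g^{-1}, \cdot))}$, completing the construction.
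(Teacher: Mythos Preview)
The paper does not prove this statement: Theorem~\ref{teo:JF-PS} is quoted from Quint~\cite{quint1} as background in the Introduction, with no argument supplied. So there is no ``paper's own proof'' to compare against; your sketch is essentially an outline of Quint's original construction.

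That said, one point in your outline deserves care. You place the Dirac masses at $g\cdot\xi_0$ on $\scr F$ for a fixed $\xi_0=[P]$, whereas the standard construction (and Quint's) places them at $g\cdot o$ in the symmetric space $X$ and passes to a boundary compactification. The Cartan--Busemann asymptotic you invoke, $a(g^{-1}\eta)-a(\eta)\to\bus(g^{-1},x)$, is a statement about the Cartan direction of $\eta$, namely $x=k_1(\eta)M$ where $\eta=k_1\exp(a(\eta))k_2$; it is not immediately a statement about $\eta\cdot\xi_0$. These two points of $\scr F$ do coincide asymptotically when $a(\eta)$ escapes into the interior of $\frak a^+$ (since $\exp(a(\eta))$ contracts $\scr F$ toward $[P]$), but you should either justify this or, more simply, run the construction with $\delta_{g\cdot o}$ on the Furstenberg compactification $\F$ and then restrict the limit measure to $\scr F$. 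With that adjustment the sketch matches Quint's argument; the comparison lemma you need is exactly Lemma~\ref{lema:bus} in the present paper.
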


The measure $\nu_\varphi$ is called a $\varphi$-Patterson-Sullivan measure of $\grupo.$
Denote by $u_0,$ the unique element of the Weyl group that sends $\frak a^+$ to $-\frak a^+.$ The \emph{opposition involution} $\ii:\frak a\to\frak a$ is defined by $\ii=-u_0.$ One has $\ii( a (g))=a(g^{-1}),$ for every $g\in G,$ and thus $\psi_\grupo\circ\ii=\psi_\grupo.$ Moreover if $\varphi\in\frak a^*$ is tangent to $\psi_\grupo,$ so is $\varphi\circ\ii.$ Hence, in higher rank, Patterson-Sullivan's measures come in pairs.

As in negative curvature, one can use these measures to construct invariant measures for the Weyl chamber flow. Consider the action of $G$ on $\posgen \times \frak a,$ via Busemann's cocycle, defined by $$g(x,y,v)=(gx,gy,v-\bus(g,y)).$$ Denote by $\vo P$ the opposite parabolic subgroup of $P,$ associated to the choice of $\frak a^+,$ the stabilizer on $G$ of the point $([P],[\vo P],0)\in\posgen\times\frak a$ is isomorphic to $M,$ and we get thus an identification $G/M=\posgen\times\frak a.$ This is called \emph{Hopf's parametrization} of $G.$

Using Tits's \cite{tits} representations of $G,$ one can define a vector valued Gromov product $\Gr_\Pi:\posgen\to \frak a$ (see Section \ref{section:convex}) such that, for every $g\in G$ and $(x,y)\in\posgen,$ $$\Gr_\Pi(gx,gy)-\Gr_\Pi(x,y)=-(\ii\circ\bus(g,x)+\bus(g,y)).$$ For a given $\varphi\in\frak a^*$ tangent to $\psi_\grupo,$ the measure $$e^{-\varphi(\Gr_\Pi(\cdot,\cdot))}\nu_{\varphi\circ\ii}\otimes\nu_{\varphi}\otimes \Leb_{\frak a}$$ in $\posgen\times\frak a,$ is thus $\grupo$-invariant and $\frak a$-invariant. Denote by $\Pat_\varphi$ the measure induced on the quotient $\grupo\/G/M.$ We call this measure \emph{the Bowen-Margulis measure} for $\varphi,$ its support is the set $$\grupo\/(\Li_\grupo^{(2)}\times\frak a),$$ where $\Li_\grupo^{(2)} =(\Li_\grupo)^2 \cap\posgen.$ This set is the analogous, in higher rank, of the nonwandering set of the geodesic flow in negative curvature. An important contrast though, is that when $\grupo$ is not a lattice and $G$ is simple (of higher rank), the measure $\Pat_\varphi$ is expected to have infinite total mass. For example, Quint \cite{quintconvexo} has shown that if $\grupo\/(\Li_\grupo^{(2)}\times\frak a)$ is compact, then $\grupo$ is a cocompact lattice.

We prove the following mixing property, for hyperconvex representations, inspired by the work of Thirion \cite{thirionmixing}. He proves an analogous mixing property for ping-pong groups.

\begin{teoB}[Theorem \ref{teo:mixing}] Let $\rho:\G\to G$ be a Zariski-dense hyperconvex representation, and consider $\varphi\in\frak a^*$ tangent to $\psi_\grupo$ in the direction $u_\varphi.$ Then there exists $\kappa>0$ such that, for any two compactly supported continuous functions $f_0,f_1:\rho(\G)\/G/M\to\R,$ one has $$(2\pi t)^{(\rank(G)-1)/2}\Pat_\varphi(f_0\cdot f_1\circ \exp({tu_\varphi}))\to\kappa \Pat_\varphi(f_0) \Pat_\varphi(f_1),$$ as $t\to\infty.$ 
\end{teoB}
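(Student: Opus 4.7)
The strategy is to reduce Theorem B to a local central limit theorem (LCLT) for a vector-valued cocycle over a hyperbolic symbolic base, following the scheme introduced by Thirion \cite{thirionmixing} for ping-pong groups. The polynomial rate $(2\pi t)^{-(\rank(G)-1)/2}$ hidden in the statement is the signature of an LCLT in the transverse directions $\frak a/\R u_\varphi$, and $\kappa$ should emerge as the limiting Gaussian density evaluated at $0$ together with an appropriate Jacobian normalization.

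First I would construct a symbolic model of the restriction of the Weyl chamber flow to its nonwandering set $\rho(\G)\/(\Li_{\rho(\G)}^{(2)}\times\frak a)$. Since $\G$ is the fundamental group of a closed negatively curved manifold, the geodesic flow on its unit tangent bundle admits a Markov coding by a topologically mixing subshift of finite type $(\Sigma,\sigma)$, with a canonical H\"older semiconjugacy to $\bord\G^{(2)}$. Post-composing with the hyperconvex equivariant map $(\z,\z)$, and tracking the Busemann cocycle $\bus$ and the vector-valued Gromov product $\Gr_\Pi$, one obtains a H\"older continuous cocycle $\kappa\colon\Sigma\to\frak a$ recording the $\frak a$-displacement performed in one symbolic step. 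In these coordinates $\Pat_\varphi$ becomes the product of the equilibrium state of $-\varphi\circ\kappa$ on $\Sigma$ with Lebesgue measure on $\frak a$, while $\exp(tu_\varphi)$ acts as pure translation by $tu_\varphi$ on the $\frak a$-fiber.

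Next, decompose $\frak a=\R u_\varphi\oplus V$ with $\dim V=\rank(G)-1$, and write $\kappa=\alpha\,u_\varphi+\kappa_V$ where $\alpha\colon\Sigma\to\R$ is scalar and $\kappa_V\colon\Sigma\to V$. The $u_\varphi$-flow is then a suspension over $\sigma$ with roof proportional to $\alpha$, whose exponential mixing handles the longitudinal direction; the content of Theorem B is the joint equidistribution of $\kappa_V$ against Lebesgue measure on $V$ with the Gaussian rate $(2\pi t)^{-\dim V/2}$. This is precisely an LCLT for $\kappa_V$ over $\sigma$ with respect to the $\varphi$-equilibrium state. The LCLT is obtained by the Nagaev--Guivarc'h spectral method: one perturbs the Ruelle transfer operator $L_0$ of $-\varphi\circ\kappa$ by the characters $\xi\mapsto e^{i\xi(\cdot)}$ for $\xi\in V^*$, applies analytic perturbation theory near $\xi=0$, and exploits that the Hessian of the leading eigenvalue at $0$ is the asymptotic covariance of $\kappa_V$. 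Fourier inversion produces the $(2\pi t)^{-(\rank(G)-1)/2}$ factor, and $\kappa$ is computed from the determinant of this covariance together with the pressure derivatives fixing $u_\varphi$.

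The main obstacle is verifying the non-arithmeticity required by the LCLT: $\kappa_V$ must not be cohomologous to a cocycle valued in a proper closed subgroup of $V$, equivalently the twisted transfer operators $L_{i\xi}$ must have spectral radius strictly smaller than that of $L_0$ for every $\xi\in V^*\setminus\{0\}$. Via a Livsic closing argument, this reduces to the additive subgroup of $\frak a$ generated by the Jordan projections $\{\LL(\rho(\g)):\g\in\G\}$ being dense in $\frak a$. Zariski-density of $\rho(\G)$ is exactly what delivers this: by Benoist \cite{limite} the limit cone $\cone_{\rho(\G)}$ has nonempty interior in $\frak a^+$, the Jordan projections are dense in $\cone_{\rho(\G)}$, and one then rules out any proper closed subgroup containing them. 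Transferring this density from the geometric Jordan projections to the abstract symbolic cocycle is the most delicate point of the proof.
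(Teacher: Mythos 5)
Your proposal matches the paper's strategy essentially line by line: Markov-code a reparametrization of the geodesic flow, push the Busemann cocycle and Gromov product down to a H\"older $\frak a$-valued cocycle over the shift, decompose $\frak a$ transversally to the growth direction $u_\varphi$, run a vector-valued local limit theorem (the paper invokes Thirion's theorem \cite{thirionmixing}, itself proved by the Guivarc'h--Hardy/Babillot spectral perturbation of the transfer operator that you describe as Nagaev--Guivarc'h), and supply the non-arithmeticity from Benoist's density of the group generated by Jordan projections (the paper cites \cite{benoist2} directly rather than deriving it from the interior of the limit cone, and your attribution to \cite{limite} should be \cite{benoist2}). The one step you leave implicit that the paper makes explicit is Proposition \ref{prop:topo}: one must verify that, with the complement chosen to be $\ker\varphi$, the transverse component of the symbolic cocycle has zero mean with respect to the $\varphi$-equilibrium state --- this is exactly why $u_\varphi$ must be taken to be the dual direction and not an arbitrary vector with $\varphi(u_\varphi)=1$, and it is what makes the limiting Gaussian centered.
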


In Section \ref{section:cocyclosreales}, we recall results on H\"older cocycles from \cite{quantitative}, of particular interest is the Reparametrizing Theorem \ref{teo:reparametrisation}. This theorem is crucial in understanding the nature of $$\rho(\G)\/ (\Li_{ \rho (\G )}^{(2)} \times \frak a),$$ when $\rho:\G\to G$ is hyperconvex (Proposition \ref{prop:topo}). In Section \ref{section:mixingweyl}, we prove a general mixing property that will imply Theorem B. This is shown in Section \ref{section:convex}. In the last section, we prove Theorem A by adapting a method of Roblin \cite{roblin} and Thirion \cite{thirion1}.

\subsubsection*{Acknowledgments}

The author is extremely thankful to Jean-Fran\c{c}ois Quint for fruitful discussions and guiding, and for proposing (and insisting) to study the orbital counting problem for Hitchin representations. He would like to thank Fran\c{c}ois Ledrappier for discussions, and the referee for very careful reading and considerably improving the exposition of this work.

\section{H\"older cocycles}\label{section:cocyclosreales}

\subsubsection*{Reparametrizations}

The standard reference for the following is Katok-Hasselblat \cite{katokh}. Let $X$ be a compact metric space, $\phi=(\phi_t)_{t\in\R}$ a continuous flow on $X$ without fixed points (i.e. no point in $X$ verifies $\phi_tx=x$ for every $t\in\R$), and $V$ a finite dimensional real vector space.

\begin{defi}A \emph{translation cocycle} over $\phi$ is a map $\k:X\times \R\to V$ that verifies the following two conditions:
\begin{itemize}
\item[-] For every $x\in X$ and $t,s\in\R,$ one has $$\k(x,t+s)=\k(\phi_sx,t)+\k(x,s).$$ \item[-] For every $t\in\R,$ the map $\k(\cdot,t)$ is H\"older-continuous, with exponent independent of $t,$ and with bounded multiplicative constant when $t$ is bounded. 
\end{itemize}
\end{defi}

Two translation cocycles $\k_1$ and $\k_2$ are \emph{Liv\v sic-cohomologous}, if there exists a continuous map $U:X\to V,$ such that for all $x\in X$ and $t\in\R$ one has \begin{equation}\label{eq:algo}\k_1(x,t)-\k_2(x,t)=U(\phi_tx)-U(x).\end{equation}

Denote by $p(\tau)$ the period of a $\phi$-periodic orbit $\tau.$ If $\k$ is a translation cocycle then the \emph{period} of $\tau$ for $\k,$ is defined by $$L_\k(\tau)=\k(x,p(\tau)),$$ for any $x\in\tau.$ It is clear that $L_\k(\tau)$ does not depend on the chosen point $x\in\tau,$ and that the set of periods is a cohomological invariant of $\k.$

The standard example of a translation cocycle, is obtained by considering a H\"older-continuous map $f:X\to V,$ and defining $\k_f:X\times\R\to V$ by \begin{equation}\label{equation:k} \k_f(x,t)=\int_0^tf(\phi_sx)ds.\end{equation} The period, of a periodic orbit $\tau$ for $f,$ is then $$\int_\tau f=\int_0^{p(\tau)}f(\phi_sx)ds.$$

We say that a map $U:X\to V$ is $\clase^1$ \emph{in the direction of the flow} $\phi,$ if for every $x\in X,$ the map $t\mapsto U(\phi_tx)$ is of class $\clase^1,$ and the map $$x\mapsto \left.\frac{\partial }{\partial t}\right|_{t=0}U(\phi_tx)$$ is continuous. Two H\"older-continuous maps, $f,g:X\to V$ are \emph{Liv\v sic-cohomolo\-gous}, if the translation cocycles $\k_f$ and $\k_g$ are. If this is the case, the map $U$ of equation (\ref{eq:algo}) is $\clase^1$ in the direction of the flow, and for all $x\in X$ one has $$f(x)-g(x)=\left.\frac{\partial}{\partial t}\right|_{t=0} U(\phi_tx).$$

If $f:X\to\R$ is positive, then, since $X$ is compact, $f$ has a positive minimum and for every $x\in X,$ the function $\k_f(x,\cdot)$ is an increasing homeomorphism of $\R.$ We then have a map $\a_f:X\times\R\to\R$ that verifies \begin{equation}\label{equation:inversa} \a_f(x,\k_f(x,t))=\k_f(x,\a_f(x,t))=t,\end{equation} for every $(x,t)\in X\times\R.$

\begin{defi}\label{defi:repa}The \emph{reparametrization} of $\phi$ by $f:X\to\R_+^*,$ is the flow $\psi=\psi^f=(\psi_t)_{t\in\R}$ on $X,$ defined by $\psi_t(x)=\phi_{\a_f(x,t)}(x),$ for all $t\in\R$ and $x\in X.$ If $f$ is H\"older-continuous, we will say that $\psi$ is a H\"older reparametrization of $\phi.$
\end{defi}

\begin{obs} If two continuous functions $f,g:X\to\R_+^*$ are Liv\v sic-cohomolo\-gous, then the flows $\psi^f$ and $\psi^g$ are conjugated i.e. there exists a homeomorphism $h:X\to X$ such that, for all $x\in X$ and $t\in\R,$ one has\footnote{This is standard, see \cite[Remark 2.2.]{exponential} for a detailed proof.} $$h(\psi_t^fx) =\psi_t^g(hx).$$
\end{obs}

Denote by $\cal M^{\phi},$ the set of $\phi$-invariant probability measures on $X.$ The \emph{pressure} of a continuous function $f:X\to\R,$ is defined by $$P(\phi,f)=\sup_{m\in\cal M^{\phi}}h(\phi,m)+\int_X fdm,$$ where $h(\phi, m)$ is the metric entropy of $m$ for $\phi.$ A probability measure $m,$ on which the least upper bound is attained, is called an \emph{equilibrium state} of $f.$ An equilibrium state for $f\equiv0$ is called a \emph{probability measure of maximal entropy}, and its entropy is called the \emph{topological entropy} of $\phi,$ denoted by $h_{\textrm{top}}(\phi).$

If $f$ is positive, and $m$ is a $\phi$-invariant probability measure on $X,$ then the probability measure $m^\#,$ defined by \begin{equation}\label{equation:numeral}\frac{dm^\#}{dm}(\cdot)=\frac{f(\cdot)}{\int fdm},\end{equation} is invariant under $\psi^f.$

\begin{lema}[{\cite[Section 2]{quantitative}}]\label{lema:entropia2} If $h=h_{\textrm{top}}(\psi^f)<\infty,$ then the map $m\mapsto m^\#$ is a bijection between equilibrium states of $-hf,$ and probability measures of maximal entropy of $\psi^f.$
\end{lema}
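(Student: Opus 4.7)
The plan rests on Abramov's formula relating the metric entropies of a flow and of a reparametrization of it: for every $\phi$-invariant probability measure $m$ on $X$, one has
$$h(\psi^f, m^\#)\,\int_X f\, dm \;=\; h(\phi,m).$$
I would take this identity as known (it is the continuous-time analogue of Kac's lemma for suspension flows, and is standard in ergodic theory). Granted this, the lemma reduces to a clean manipulation of the variational principle.

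First I would observe that $m\mapsto m^\#$ is a bijection from $\cal M^\phi$ onto $\cal M^{\psi^f}$. Indeed, $\phi$ and $\psi^f$ share their orbits and differ only by a time change, so $m^\#$ is $\psi^f$-invariant; the inverse sends $n\in\cal M^{\psi^f}$ to the measure $n^\flat$ defined by $dn^\flat/dn=(1/f)/\int(1/f)\, dn$, which is $\phi$-invariant for the same reason. Here the positivity of $f$ on the compact space $X$ ensures that $f$ and $1/f$ are bounded away from zero and infinity, so all the normalising integrals are finite and strictly positive.

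Next I use Abramov's formula to compute the pressure of $-hf$:
$$P(\phi,-hf)\;=\;\sup_{m\in\cal M^\phi}\Bigl(h(\phi,m)-h\!\int_X f\, dm\Bigr)\;=\;\sup_{m\in\cal M^\phi}\int_X f\, dm\cdot\bigl(h(\psi^f,m^\#)-h\bigr).$$
Since $h=h_{\textrm{top}}(\psi^f)$, the factor $h(\psi^f,m^\#)-h$ is non-positive, while $\int f\, dm>0$; the supremum therefore equals $0$, attained exactly when $h(\psi^f,m^\#)=h$. Hence a measure $m\in\cal M^\phi$ is an equilibrium state of $-hf$ (i.e.\ realises $P(\phi,-hf)=0$ in the variational formula) if and only if $m^\#$ is a probability measure of maximal entropy for $\psi^f$. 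Combined with the bijection from the previous paragraph, this is precisely the claim.

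The main obstacle is the Abramov formula itself: the rest of the argument is a one-line variational computation. In practice I would cite it from the reference \cite{quantitative} (or from Katok--Hasselblatt) rather than reprove it, and structure the exposition around the short pressure calculation above, making explicit how the positivity and compactness hypotheses on $f$ guarantee that all the reciprocal-normalisation constants appearing in $m^\#$ and $n^\flat$ behave well.
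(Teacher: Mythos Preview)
Your argument is correct and is exactly the standard route via Abramov's formula; the paper does not prove the lemma here but merely cites \cite[Section~2]{quantitative}, where the same computation is carried out. There is nothing to correct or add: once $h(\psi^f,m^\#)\int f\,dm=h(\phi,m)$ and the bijectivity of $m\mapsto m^\#$ are in hand, the identity $P(\phi,-hf)=0$ with equality precisely at measures of maximal $\psi^f$-entropy drops out as you wrote.
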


\subsubsection*{Anosov flows and Markov codings}

Assume from now on that $X$ is a compact manifold, and that the flow $\phi$ is $\clase^1.$ We say that $\phi$ is \emph{Anosov}, if the tangent bundle of $X$ splits as a sum of three bundles $$ TX=E^s\oplus E^0\oplus E^u,$$ that are $d\phi_t$-invariant for every $t\in\R$ and, there exist positive constants $C$ and $c$ such that, $E^0$ is the direction of the flow, and for every $t\geq0$ one has $\|d\phi_tv\|\leq Ce^{-ct}\|v\|$ for every $v\in E^s,$ and $\|d\phi_{-t}v\|\leq Ce^{-ct}\|v\|$ for every $v\in E^u.$ 

We need the following classical result of Liv\v sic \cite{livsic2}:

\begin{teo}[Liv\v sic \cite{livsic2}]\label{teo:livsic3} Let $\phi$ be an Anosov flow on $X$ and $\k:X\times \R\to V$ a translation cocycle. If $L_\k(\tau)=0$ for every periodic orbit $\tau,$ then $\k$ is Liv\v sic-cohomologous to $0.$
\end{teo}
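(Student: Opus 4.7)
The plan is to adapt Livšic's classical argument, which uses the Anosov closing lemma and transitivity of $\phi$ to construct the coboundary explicitly along a dense orbit.

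First, using transitivity of the Anosov flow (which follows from the closing lemma together with nonwandering of the whole space in the setting where the theorem is applied), I pick a point $x_0 \in X$ whose orbit is dense and non-periodic. Since $\phi$ has no fixed points and the orbit has no self-intersections, I can unambiguously define a candidate $U_0 : \{\phi_t x_0 : t \in \R\} \to V$ by $U_0(\phi_t x_0) = \k(x_0, t)$. This choice is forced by the cocycle identity: any continuous $U$ satisfying $\k(x_0, t) = U(\phi_t x_0) - U(x_0)$ must agree with $U_0$ up to an additive constant.

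The central task is to show that $U_0$ is uniformly continuous on the dense orbit, so that it extends continuously to $X$. Take times $s < t$ for which $\phi_s x_0$ is $\delta$-close to $\phi_t x_0$. By the cocycle identity $U_0(\phi_t x_0) - U_0(\phi_s x_0) = \k(\phi_s x_0, t-s)$, and the point $\phi_s x_0$ almost returns to itself after time $t-s$. The Anosov closing lemma then produces a periodic point $z$ of period $T$ close to $t-s$, whose orbit $\{\phi_r z : 0 \leq r \leq T\}$ exponentially shadows $\{\phi_r(\phi_s x_0) : 0 \leq r \leq t-s\}$, with shadowing distance at time $r$ bounded by $C\delta\bigl(e^{-cr} + e^{-c(T-r)}\bigr)$.

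The main obstacle is that directly invoking the Hölder property of $\k(\cdot, T)$ is useless, since the Hölder constant may degrade as $T$ grows. To get around this, I split $[0,T]$ into subintervals of length one and apply the cocycle identity:
\begin{equation*}
\k(\phi_s x_0, T) - \k(z, T) = \sum_{i=0}^{\lfloor T \rfloor - 1} \bigl( \k(\phi_{s+i} x_0, 1) - \k(\phi_i z, 1) \bigr) + (\text{bounded remainder}).
\end{equation*}
Each summand is controlled by the uniform Hölder estimate for time one, applied to two points whose distance is $C\delta e^{-c\alpha \min(i, T-i)}$; the contributions form a convergent geometric series summing to $O(\delta^{\alpha})$, independent of $T$. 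Combined with the hypothesis $\k(z, T) = L_\k(\tau) = 0$ and a small correction for the discrepancy between $T$ and $t-s$ (handled by the second defining property of a translation cocycle), this shows $|\k(\phi_s x_0, t-s)| = O(\delta^\alpha)$, so $U_0$ is Hölder on its domain.

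Finally, $U_0$ extends by uniform continuity to a continuous map $U : X \to V$. The cohomological equation $\k(x,t) = U(\phi_t x) - U(x)$ holds on the dense orbit of $x_0$ by construction, and extends to all of $X$ using the continuity of both sides in $x$ (continuity of $\k(\cdot, t)$ for each fixed $t$). This realizes $\k$ as a coboundary, finishing the proof.
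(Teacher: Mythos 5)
The paper cites this result directly from Liv\v sic and gives no proof of its own, so there is nothing internal to compare against. Your sketch is the standard Liv\v sic argument: define a candidate primitive $U_0$ along a dense orbit, use the Anosov closing lemma and a time-$1$ decomposition to show $U_0$ is uniformly H\"older, extend by continuity. That is the right proof.

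Two points you gloss over, both standard but worth naming. First, the Anosov closing lemma for \emph{flows} does not match $\phi_r(\phi_s x_0)$ to $\phi_r z$ at equal times; it produces an increasing reparametrization $\theta:[0,t-s]\to[0,T]$ with $|\theta(r)-r|=O(\delta)$ and $d(\phi_r(\phi_s x_0),\phi_{\theta(r)}z)\leq C\delta\bigl(e^{-cr}+e^{-c(T-r)}\bigr)$. So the telescoping sum compares $\k(\phi_i(\phi_s x_0),1)$ with $\k(\phi_{\theta(i)}z,\theta(i+1)-\theta(i))$, and the increments $\theta(i+1)-\theta(i)$ are only close to $1$, not equal to it. Controlling those terms, and the remainder you acknowledge coming from $|T-(t-s)|=O(\delta)$, requires that $\k(y,u)$ be small for $u$ small uniformly in $y$ --- i.e.\ continuity of $\k$ in the time variable, uniform in $x$. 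That property is not listed in the paper's definition of a translation cocycle (only H\"older in $x$ with bounded constants for bounded $t$ is stated), but it is implicit --- the averaging formula in Lemma~\ref{lema:livsicgral} already presupposes it --- and you should make it explicit rather than attributing it to ``the second defining property.'' Second, a small typo: after exponential shadowing the distance at step $i$ is $\approx C\delta\,e^{-c\min(i,T-i)}$; the H\"older exponent $\alpha$ enters only after applying the time-$1$ H\"older bound to that distance, producing the summable series $\sum_i e^{-c\alpha\min(i,T-i)}$. Finally, transitivity is indeed needed to run this argument, and while the paper's statement omits it, every Anosov flow the paper actually applies this to (geodesic flows of closed negatively curved manifolds and their reparametrizations) is transitive, so your assumption is harmless in context.
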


As the next lemma proves, one can always chose a translation cocycle of the form $\k_f,$ in the cohomology class of a given translation cocycle $\k.$

\begin{lema}\label{lema:livsicgral}Let $\phi$ be an Anosov flow on $X,$ and let $\k:X\times\R\to V$ be a translation cocycle, then there exists a H\"older-continuous map $f:X\to V,$ such that the cocycles $\k$ and $\k_f$ are Liv\v sic-cohomologous.
\end{lema}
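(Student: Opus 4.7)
The plan is to regularize $\k$ in the flow direction by convolving against a smooth bump. Since the cocycle property lets us express $\k(x,\cdot)$ on an orbit in terms of values at the base point, a convolution in the $t$-variable automatically descends to produce both a coboundary $U$ and a candidate function $f$. Concretely, fix a smooth compactly supported $\chi:\R\to\R$ with $\int_\R\chi=1$ and set
$$U(x)=-\int_\R \chi(v)\,\k(x,v)\,dv,\qquad f(x)=-\int_\R \chi'(v)\,\k(x,v)\,dv.$$
The claim is that $f$ is H\"older-continuous and $\k-\k_f$ is the $\phi$-coboundary of $U$.

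The H\"older regularity of $f$ follows immediately from the second axiom of a translation cocycle: since $\chi'$ is supported in some bounded interval $[-M,M]$, the H\"older constant of $\k(\cdot,v)$ is uniformly bounded for $|v|\le M$, and the common H\"older exponent $\alpha$ is inherited by $f$. For the cohomology relation, start from
$$U(\phi_t x)-U(x)=-\int_\R\chi(s)\bigl[\k(x,t+s)-\k(x,t)\bigr]\,ds+\int_\R\chi(s)\k(x,s)\,ds,$$
which, after the change of variables $u=t+s$ and using $\int\chi=1$, rearranges to
$$\k(x,t)-\bigl(U(\phi_t x)-U(x)\bigr)=\int_\R\bigl[\chi(u-t)-\chi(u)\bigr]\,\k(x,u)\,du.$$
Writing $\chi(u-t)-\chi(u)=-\int_0^t\chi'(u-s)\,ds$, applying Fubini, and using the cocycle identity $\k(x,v+s)=\k(\phi_s x,v)+\k(x,s)$ together with $\int\chi'=0$ collapses the right-hand side to
$$-\int_0^t\int_\R\chi'(v)\,\k(\phi_s x,v)\,dv\,ds=\int_0^t f(\phi_s x)\,ds=\k_f(x,t),$$
which is exactly the desired Liv\v sic cohomology relation.

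The main obstacle is essentially bookkeeping: one has to be careful that the convolutions make sense (compact support of $\chi$ plus local boundedness of $\k$), that the swap of integrals is justified (bounded integrand on a compact domain), and that the cancellation $\int\chi'=0$ is used exactly where it is needed to kill the $\k(x,s)$ terms that otherwise would destroy the cohomological form. No appeal to the Anosov hypothesis is needed for this lemma; it is purely a smoothing argument, and the Anosov assumption inherited from the surrounding discussion only enters in the next applications (e.g.\ via Theorem \ref{teo:livsic3}).
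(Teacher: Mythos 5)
Your proof is correct, and in broad outline it follows the same strategy as the paper's: mollify $\k$ in the $t$-variable and extract the density $f$ and the coboundary $U$ from the mollified object. The paper does this with a box-car average, setting $\k^C(x,t)=\frac1C\int_0^C\k(\phi_sx,t)\,ds$; the resulting $f$ is $f(x)=\frac1C\k(x,C)$, which is what your construction degenerates to if one formally takes $\chi=\frac1C\1_{[0,C]}$ (so that $\chi'=\frac1C(\delta_0-\delta_C)$). Using a genuine smooth bump, as you do, is a clean way to avoid distributional derivatives.

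There is one genuine difference worth flagging, and it is to your credit. To show that $\k$ and $\k^C$ are Liv\v{s}ic-cohomologous, the paper's proof observes that they have the same periods and then invokes Liv\v{s}ic's Theorem \ref{teo:livsic3}, which is where the Anosov hypothesis enters. Your argument instead produces the coboundary $U$ explicitly, using only the cocycle identity, Fubini, and $\int\chi'=0$, and so you are right that the Anosov assumption plays no role in this lemma. In fact the same improvement is available for the paper's own box-car construction: the cocycle identity $\k(\phi_sx,t)-\k(x,t)=\k(\phi_tx,s)-\k(x,s)$ gives directly $\k^C(x,t)-\k(x,t)=W(\phi_tx)-W(x)$ with $W(x)=\frac1C\int_0^C\k(x,s)\,ds$, so the appeal to Liv\v{s}ic's theorem is unnecessary there as well. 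The one point worth making explicit, which you gesture at in your ``main obstacle'' paragraph, is that $U$ itself is H\"older-continuous (hence continuous, as the definition of Liv\v{s}ic-cohomologous requires): this follows by the same argument you give for $f$, since $\chi$ (not just $\chi'$) has compact support and the H\"older constants of $\k(\cdot,v)$ are uniformly bounded on the support.
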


\begin{proof} Fix $C>0,$ and consider the translation cocycle $\k^C,$ defined by $$\k^C(x,t)=\frac 1C\int_0^C \k(\phi_s(x),t)ds.$$ The translation cocycles $\k^C$ and $\k$ are Liv\v sic-cohomolgous since they have the same periods. One easily checks that $\k^C(\cdot,t)$ is of class $\clase^1$ in the direction of the flow and thus, $\k^C$ is the integral of a H\"older-continuous function along the orbits of $\phi.$
\end{proof}

The following lemma is useful. 

\begin{lema}[{\cite[Section 3]{quantitative}}]\label{lema:positiva}Consider a H\"older-continuous function $f:X\to\R,$ such that $$\frac1{p(\tau)}\int_{\tau} f>k,$$ for some positive $k$ and every periodic orbit $\tau$ of $\phi.$ Then $f$ is Liv\v sic-cohomologous to a positive H\"older-continuous function.
\end{lema}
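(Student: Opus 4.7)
The plan is to express $f$ as a sufficiently long Birkhoff average plus a Liv\v sic coboundary; the Birkhoff average will be positive provided the averaging time is large enough, and the coboundary can be written down explicitly.

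First, I would promote the periodic-orbit lower bound to a lower bound on all $\phi$-invariant probability measures. Since $\phi$ is a $\clase^1$ Anosov flow, Sigmund's density theorem (a consequence of specification) tells us that the periodic orbit measures $\mu_\tau=\frac{1}{p(\tau)}\int_0^{p(\tau)}\delta_{\phi_s x_\tau}\,ds$ are weak-$*$ dense in $\cal M^\phi$. The hypothesis $\int f\,d\mu_\tau=\frac{1}{p(\tau)}\int_\tau f>k$ and the weak-$*$ continuity of $m\mapsto \int f\,dm$ then yield
\[
\int_X f\,dm\geq k\qquad\text{for every }m\in\cal M^\phi.
\]

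Next, a compactness argument upgrades this to uniform positivity of a time-$T$ Birkhoff average. If no such $T$ existed, one could choose $T_n\to\infty$ and $x_n\in X$ with $\frac{1}{T_n}\int_0^{T_n}f(\phi_s x_n)\,ds\leq 0$; the empirical measures $m_n=\frac{1}{T_n}\int_0^{T_n}\delta_{\phi_s x_n}\,ds$, being probabilities on the compact space $X$, would accumulate on a $\phi$-invariant probability $m_\infty$ satisfying $\int f\,dm_\infty\leq 0$, contradicting the previous step. Fix such a $T$ and set
\[
h(x):=\frac{1}{T}\int_0^T f(\phi_s x)\,ds,
\]
so that $h$ is strictly positive on $X$; uniform continuity of $(s,x)\mapsto \phi_s x$ on $[0,T]\times X$ makes $h$ H\"older-continuous with the same exponent as $f$.

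Finally, I would exhibit the coboundary explicitly. Define
\[
U(x):=\frac{1}{T}\int_0^T(s-T)\,f(\phi_s x)\,ds.
\]
After the substitution $u=s+t$ one has $U(\phi_t x)=\frac{1}{T}\int_t^{T+t}(u-t-T)f(\phi_u x)\,du$, and Leibniz's rule at $t=0$ gives
\[
\frac{\partial}{\partial t}\bigg|_{t=0}U(\phi_t x)=f(x)-h(x).
\]
Thus $f$ is Liv\v sic-cohomologous to the positive H\"older function $h$, with $U$ continuous and $\clase^1$ in the flow direction by construction.

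The main obstacle is the first step: Sigmund's density is the substantive hyperbolic-dynamics input, and without it the hypothesis concerning only periodic orbits would give no control on Birkhoff averages at non-periodic points. Once that density is granted, the remainder of the argument is a routine compactness and Leibniz-rule calculation.
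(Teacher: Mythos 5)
Your proof is correct and follows the standard route (the paper itself only cites \cite{quantitative} here, and the argument you give is the one used there and in the closely related Lemma~\ref{lema:livsicgral} of this paper, which employs the same time-averaging coboundary device). The three ingredients — Sigmund density to pass from periodic-orbit averages to all invariant measures, Krylov--Bogolyubov compactness to obtain a uniformly positive time-$T$ Birkhoff average $h$, and the weighted average $U(x)=\frac1T\int_0^T(s-T)f(\phi_s x)\,ds$ as the explicit transfer function — are exactly the expected ones, and the Leibniz computation checks out.
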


In order to study the ergodic theory of Anosov flows, Bowen \cite{bowen2} and Ratner \cite{ratner2} introduced the notion of Markov coding.

\begin{defi}\label{defi:markovpatition}The triple $(\E,\pi,r)$ is a \emph{Markov coding} for $\phi,$ if $\E$ is an irreducible two-sided subshit of finite type, the maps $\pi:\E\to X$ and $r:\E\to\R_+^*$ are H\"older-continuous and verify the following conditions: Let $\sigma:\E\to\E$ be the shift, and let $\hat r:\E\times\R\to\E\times\R$ be the homeomorphism defined by $$\hat r(x,t)=(\sigma x, t-r(x)),$$ then

\begin{itemize}\item[i)] the map $\Pi:\E\times\R\to X$ defined by $\Pi(x,t)=\phi_t(\pi( x))$ is surjective and $\hat r$-invariant, \item[ii)] consider the suspension flow $\sigma^r=(\sigma^r_t)_{t\in\R}$ on $(\E\times\R)/\hat r,$ then the induced map $\Pi:(\E\times\R)/\hat r\to X$ is bounded-to-one and, injective on a residual set which is of full measure for every ergodic invariant measure of total support of $\sigma^r.$ 
\end{itemize}
\end{defi}

\begin{obs}\label{obs:weak} If a flow $\phi$ admits a Markov coding then every reparametrization $\psi$ of $\phi$ also admits a Markov coding, simply by changing the roof function $r.$
\end{obs}

A Markov coding is a very accurate measurable model for a flow $\phi.$ If $\phi$ admits a Markov coding, then it has a unique probability measure of maximal entropy, and the function $\Pi:(\E\times\R)/\hat r\to X$ is an isomorphism, between the probability measures of maximal entropy of $\sigma^r$ and that of $\phi.$ In particular the topological entropy of $\phi$ coincides with that of $\sigma^r.$

Recall that a flow $\phi$ is \emph{transitive} if it has a dense orbit.

\begin{teo}[Bowen \cite{bowen1,bowen2}] A transitive Anosov flow admits a Markov coding.
\end{teo}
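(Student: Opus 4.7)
The plan is to construct a Markov coding by building a Markov family of cross-sections to the flow and then passing to the suspension description. The two main ingredients are the local product structure coming from the Anosov splitting, and the Sinai--Bowen rectangle construction adapted to a flow rather than to a diffeomorphism.

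First I would fix a small constant $\eps>0$ and, for each $x\in X$, define local strong stable and unstable manifolds $W^{ss}_\eps(x)$ and $W^{uu}_\eps(x)$, which are $\clase^1$ discs tangent to $E^s$ and $E^u$ respectively and on which $\phi_t$ (resp.\ $\phi_{-t}$) contracts exponentially. A standard consequence of hyperbolicity is the local product structure: there exist $\delta>0$ and a continuous bracket $[y,z]$ defined on $\{(y,z):d(y,z)<\delta\}$ such that $[y,z]\in W^{ss}_\eps(y)\cap\bigcup_{|t|<\eps}\phi_tW^{uu}_\eps(z)$, and this bracket is H\"older-continuous. From this, on a small smooth transversal $D$ to the flow passing through a point $x_0$, one obtains a local product structure $D\simeq W^s_\loc(x_0)\cap D\times W^u_\loc(x_0)\cap D$ (this notation is informal, not a defined macro), and one defines a \emph{rectangle} in $D$ to be a subset closed under this bracket.

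Next, using transitivity, I would cover the nonwandering set (which is all of $X$ since the flow is transitive) by finitely many flow-boxes $T_i=\bigcup_{0\leq t\leq h_i}\phi_t D_i$, where each $D_i$ is a small closed disc transverse to the flow and $h_i>0$. Inside each transversal $D_i$ one performs the Sinai--Bowen construction: starting from a finite $\eps$-dense set of proper rectangles $\{R_j\}$ one refines them by cutting along the preimages and images of boundaries under the Poincar\'e return map until the collection becomes Markov, meaning that whenever the Poincar\'e return map $P$ sends $\inte R_j$ to a set meeting $\inte R_k$, the stable (resp.\ unstable) leaves of $R_j$ are mapped across the full stable (resp.\ unstable) extent of $R_k$. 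This is the technically delicate step---one must check that the refinement terminates after finitely many cuts and that the resulting rectangles have boundaries disjoint from their own $P$-orbit interiors modulo a meager set.

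Once a finite Markov family $\cal R=\{R_1,\ldots,R_N\}$ in $D=\bigsqcup D_i$ is in hand, I would let $\E\subset\{1,\ldots,N\}^\Z$ be the two-sided subshift of finite type whose transition matrix $A$ has $A_{jk}=1$ exactly when $P(\inte R_j)\cap\inte R_k\neq\vacio$. The map $\pi:\E\to X$ sending a sequence $(j_n)_{n\in\Z}$ to the unique point $\bigcap_{n\geq0}P^{-n}\cl R_{j_n}\cap\bigcap_{n<0}P^{-n}\cl R_{j_n}\subset D$ is well defined and H\"older-continuous by the exponential contraction/expansion of $P$. The roof function $r:\E\to\R_+^*$ is taken to be the first return time of $\pi(x)$ to $D$ under $\phi$; it is H\"older-continuous because $D$ is smooth and transverse to the smooth flow. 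The resulting $\Pi:\E\times\R\to X$, $\Pi(x,t)=\phi_t\pi(x)$, is $\hat r$-invariant and surjective because the $T_i$ cover $X$. Irreducibility of $\E$ follows from transitivity of $\phi$ by a standard argument: density of some forward orbit forces the matrix $A$ to be irreducible.

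The hard part will be the final property in Definition \ref{defi:markovpatition}, namely that $\Pi:(\E\times\R)/\hat r\to X$ be bounded-to-one and injective on a residual full-measure set. Failure of injectivity is concentrated on the orbits through $\bigcup_j\fr R_j$; by the Markov property each such boundary point has at most a bounded (in terms of $N$) number of codings, giving the bounded-to-one property. For the residual set, the set of points whose orbit never meets $\fr\cal R$ is a dense $G_\delta$, and it has full measure for any ergodic invariant measure of full support since the boundary, being a proper closed invariant-under-stable-or-unstable-holonomy subcomplex, has empty interior and the complement is forward-invariant under $P$; a Borel--Cantelli type argument shows its measure is zero. This finishes the construction.
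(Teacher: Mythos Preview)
The paper does not prove this theorem; it is quoted as a result of Bowen \cite{bowen1,bowen2} and used as a black box, so there is no ``paper's own proof'' to compare against. Your sketch follows the standard Bowen construction (Markov family of cross-sections, Poincar\'e return map, subshift on the transition matrix, suspension) and is the right outline.

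A couple of places where your sketch is too optimistic if you ever want to turn it into a full proof. First, the refinement ``terminates after finitely many cuts'' is not how Bowen proceeds: one does not iterate an arbitrary cutting process and hope it stabilizes. Instead one fixes a sufficiently fine initial cover by proper rectangles and refines \emph{once} by the common refinement with all first-return preimages and images; the Markov property then follows from the way stable and unstable pieces of rectangles are carried across by $P$. Second, your measure-zero argument for $\bigcup_j\fr R_j$ is not a Borel--Cantelli argument. The point is that the stable boundary $\fr^s\cal R$ is $P$-forward invariant and the unstable boundary $\fr^u\cal R$ is $P$-backward invariant; each is closed with empty interior, so by ergodicity of any fully supported invariant measure they have measure zero. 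Finally, your parenthetical use of $W^s_\loc$ should be replaced by something defined, e.g.\ $W^{ss}_\eps(x_0)\cap D$, if you write this up.
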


The following is standard.

\begin{prop}[Bowen-Ruelle \cite{bowenruelle}]\label{prop:ruellebowen} Let $\phi$ be a transitive Anosov flow. Then, given a H\"older-continuous function $f:X\to\R,$ there exists a unique equilibrium state for $f,$ moreover, the equilibrium state is ergodic.
\end{prop}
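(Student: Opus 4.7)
The plan is to transfer the problem from the Anosov flow $\phi$ to a subshift of finite type via the Markov coding, apply the classical thermodynamic formalism for Hölder potentials on such subshifts, and push the result back to $X$.

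First, since $\phi$ is a transitive Anosov flow, Bowen's theorem furnishes a Markov coding $(\E,\pi,r)$. Given the Hölder-continuous potential $f:X\to\R$, I would form the discretized potential $\widetilde f:\E\to\R$ defined by
\[
\widetilde f(x)=\int_0^{r(x)}f(\phi_s\pi(x))\,ds,
\]
which is Hölder-continuous because $f$, $\pi$ and $r$ are. Let $\sigma^r$ be the suspension flow on $(\E\times\R)/\widehat r$. The standard suspension correspondence $m\mapsto m^{\#}$ from $\cal M^\sigma$ to $\cal M^{\sigma^r}$, combined with Abramov's formula $h(\sigma^r,m^\#)=h(\sigma,m)/m(r)$ and $\int f\,dm^\#=\int\widetilde f\,dm/m(r)$, shows that
\[
h(\sigma^r,m^\#)+\int f\,dm^\#-c=\frac{1}{m(r)}\Bigl(h(\sigma,m)+\int(\widetilde f-cr)\,dm\Bigr)
\]
for any real $c$. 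Hence $P(\sigma^r,f)$ is the unique $c_0$ with $P(\sigma,\widetilde f-c_0 r)=0$, and $m^\#$ is an equilibrium state of $f$ for $\sigma^r$ if and only if $m$ is an equilibrium state of $\widetilde f-c_0 r$ for $\sigma$.

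Second, I would invoke the classical Bowen--Ruelle theorem for the irreducible two-sided subshift of finite type $\sigma:\E\to\E$: every Hölder-continuous potential admits a unique equilibrium state, which is a Gibbs measure, ergodic (indeed mixing). Applied to $\widetilde f-c_0 r$, this produces a unique equilibrium state $m$ on $\E$, whose associated suspension $m^\#$ is then the unique equilibrium state for $f$ on $(\E\times\R)/\widehat r$, and is ergodic because $m$ is.

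Third, I would transport this back to $X$ via $\Pi:(\E\times\R)/\widehat r\to X$. Since $\Pi$ is bounded-to-one and $m^\#$ has full support (as a Gibbs measure on an irreducible suspension), $\Pi$ is injective on a set of full $m^\#$-measure by the defining property of a Markov coding; thus $\Pi$ is a measurable isomorphism onto its image, and the pushforward $\Pi_*m^\#$ is an ergodic, $\phi$-invariant probability measure on $X$ with the same entropy and the same $f$-average as $m^\#$. Combined with the identification of pressures for Markov-coded flows, this shows $\Pi_*m^\#$ is an equilibrium state of $f$ for $\phi$. Conversely, any equilibrium state $\nu$ of $f$ for $\phi$ lifts through $\Pi$ to an equilibrium state of $f$ for $\sigma^r$ (using surjectivity of $\Pi$ and the entropy/integration identities), and the uniqueness at the shift level forces $\nu=\Pi_* m^\#$.

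The delicate point, and the one I would dwell on, is the passage through $\Pi$: one must verify that $\Pi$ does not destroy entropy and does not create new equilibrium states that fail to come from the shift. This is handled precisely by the two conditions in Definition \ref{defi:markovpatition} (surjectivity, and injectivity on a residual set of full measure for every ergodic fully-supported $\sigma^r$-invariant measure), which guarantee that the variational principle on $X$ and on $(\E\times\R)/\widehat r$ yield the same value and that equilibrium states correspond bijectively. The remaining ingredients — Abramov's formula and the Bowen--Ruelle theorem on subshifts of finite type — are entirely classical.
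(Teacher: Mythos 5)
Your proposal takes precisely the route that the paper itself sketches immediately after stating the proposition: pass to the Markov coding $(\E,\pi,r)$, form the discretized potential $F(x)=\int_0^{r(x)}f(\phi_t(\pi x))\,dt$ (your $\widetilde f$), take the equilibrium state $\nu$ of $F-P(f)r$ on the shift, and push forward via the suspension/$\Pi$ to get $m_f$ as in equation (\ref{eq:equlibrio}). The paper does not spell out the Abramov bookkeeping or the injectivity-almost-everywhere argument because it treats the proposition as classical (citing Bowen--Ruelle), but what you wrote is the standard proof and matches the paper's description of the construction exactly.
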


The equilibrium state of the last proposition can be described as follows (see Bowen-Ruelle \cite[Proposition 3.1]{bowenruelle}). If $(\E,\pi,r)$ is a Markov coding for the Anosov flow $\phi,$ then consider the function $F:\E\to\R$ defined by $$F(x)=\int_0^{r(x)}f(\phi_t(\pi x))dt,$$ and consider the equilibrium state $\nu,$ of $F-P(f)r,$ then the for every measurable function $G:X\to\R$ one has \begin{equation}\label{eq:equlibrio} \int_X Gdm_f=\frac1{\int rd\nu}\int_\E\int_0^{r(x)}G(\phi_t(\pi x ))dtd\nu(x).\end{equation}

We finish this subsection with the following classical result.

\begin{teo} Let $M$ be a closed connected, negatively curved Riemannian manifold. Then the geodesic flow of $M$ on $T^1M,$ is a transitive Anosov flow.
\end{teo}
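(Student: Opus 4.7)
The plan is to establish the Anosov splitting of $TT^1M$ and the existence of a dense orbit. Both properties follow from the negative curvature of $M$ together with the classical Jacobi field description of the differential of the geodesic flow.

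For the hyperbolic splitting, I would fix $v\in T^1M$ and identify $T_vT^1M$ with $\R\g_v'(0) \oplus J_v^\perp$, where $\g_v$ is the geodesic determined by $v$ and $J_v^\perp$ is the space of Jacobi fields along $\g_v$ orthogonal to $\g_v'$. In negative curvature there are no conjugate points, so for each $w\in T_{\g_v(0)}M$ orthogonal to $v$ and each $T>0$ there is a unique Jacobi field $J_T$ with $J_T(0)=w$ and $J_T(T)=0$; Rauch comparison (using that the sectional curvature is bounded above by some $-\k^2<0$) shows that the $J_T$ converge, as $T\to+\infty$, to a Jacobi field $J_+^w$ with $J_+^w(0)=w$ and $\|J_+^w(t)\|$ decaying exponentially at rate at least $\k$. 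Setting $E_v^s = \{J_+^w : w\perp v\}$, defining $E_v^u$ analogously using $T\to-\infty$, and $E_v^0=\R\g_v'(0)$, one obtains the decomposition $TT^1M = E^s\oplus E^0\oplus E^u$. It is $d\phi_t$-invariant by construction, continuous in $v$ because the Jacobi equation depends continuously on $v$, and Rauch comparison yields the required estimate $\|d\phi_t w\|\le Ce^{-\k t}\|w\|$ for $w\in E^s$ and $t\ge 0$, and the analogous inequality on $E^u$ for $t\le 0$.

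For transitivity, I would invoke the Liouville measure $\Leb$ on $T^1M$, which is a finite $\phi$-invariant Borel measure of full support. The Hopf--Anosov ergodicity theorem in variable negative curvature states that $\phi$ is ergodic with respect to $\Leb$; combined with the full support this implies that $\phi$-orbits are dense for $\Leb$-almost every point, and in particular a dense orbit exists. Alternatively, Poincar\'e recurrence applied to $\Leb$ shows that the nonwandering set is all of $T^1M$, after which a soft argument using the Anosov closing lemma yields transitivity.

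The main obstacle is the construction of the stable bundle $E^s$ and the verification that the ``limit at infinity'' of the Jacobi fields $J_T$ exists and depends continuously on $v$. This is the heart of Anosov's original theorem and relies essentially on the absence of conjugate points together with Rauch comparison; once it is in place, invariance, continuity and the exponential estimates all follow by routine arguments. The ergodicity of Liouville measure in variable negative curvature, which I use for transitivity, is itself a nontrivial result, but for transitivity alone it can be replaced by the much softer closing-lemma argument mentioned above.
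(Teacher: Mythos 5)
The paper states this as a classical theorem of Anosov and offers no proof of its own, so there is no in-text argument to compare against; your sketch is the standard proof and is essentially correct. One technical point worth flagging: the contraction estimate $\|d\phi_t w\|\le Ce^{-\k t}\|w\|$ for $w\in E^s_v$ refers to the Sasaki norm, which involves both $\|J_+^w(t)\|$ and $\|(J_+^w)'(t)\|$. The Rauch comparison you invoke, using the upper curvature bound $K\le-\k^2$, gives the decay of $\|J_+^w(t)\|$; to control $\|(J_+^w)'(t)\|$ one also uses the lower curvature bound $K\ge -b^2$ (available by compactness of $M$) through the Riccati equation, which bounds the ratio $\|(J_+^w)'(t)\|/\|J_+^w(t)\|$ uniformly for stable Jacobi fields. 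As for transitivity, your softer route via Poincar\'e recurrence and the closing lemma additionally needs that $T^1M$ is connected, which holds since $M$ is connected and $\dim M\ge 2$; with that, Smale's spectral decomposition (whose proof uses the closing lemma) shows that an Anosov flow on a connected manifold whose nonwandering set is the entire manifold must be transitive. The Hopf-ergodicity route is heavier machinery than needed here but is of course also correct.
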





\subsubsection*{H\"older cocycles on $\bord\G$}

Let $M$ be a closed connected negatively curved Riemannian manifold $M,$ and denote by $\w M\to M$ its universal cover. The group $\G=\pi_1(M)$ is hyperbolic, and the visual boundary of $\w M$ is identified with the boundary at infinity $\bord\G$ of the group, endowed with its usual H\"older structure (see Ghys-delaHarpe \cite{ghysharpe}). We will now focus on H\"older cocycles on $\bord\G.$

\begin{defi}\label{defi:cociclo}A \emph{H\"older cocycle} is a map
$c:\G\times\bord\G\to V,$ such that $$c(\g_0\g_1,x)=c(\g_0,\g_1x)+c(\g_1,x),$$ for
any $\g_0,\g_1\in\G$ and $x\in\bord\G,$ and such that $c(\g,\cdot)$ is H\"older-continuous, for every $\g\in\G$ (the same exponent is assumed for every $\g\in\G$). 
\end{defi}

Recall that each $\g\in\G-\{e\},$ has two fixed points on $\bord\G,$ $\g_+$ and $\g_-,$ and that for every $x\in\bord\G-\{\g_-\}$ one has $\g^n x\to \g_+,$ as $n\to\infty.$ We will refer to $\g_+$ as the \emph{attractor} of $\g.$ The \emph{period} of $\g$ for a H\"older cocycle $c,$ is defined by $$\l_c(\g)=c(\g,\g_+).$$ The cocycle property implies that for all $n\in\N,$ one has $\l_c(\g^n)=n\l_c(\g),$ and $\l_c(\g)$ only depends on the conjugacy class $[\g]$ of $\g.$ 

Two cocycles $c$ and $c'$ are \emph{cohomologous}, if there exists a H\"older-continuous function $U:\bord\G\to V,$ such that for all $\g\in\G$ one has $$c(\g,x)-c'(\g,x)=U(\g x)-U(x).$$ One easily deduces from the definition that, the set of periods of a H\"older cocycle is a cohomological invariant. The following theorem of Ledrappier \cite{ledrappier} relates H\"older cocycles with H\"older-continuous maps $T^1 M\to V.$ 

Recall that the periodic orbits of the geodesic flow of $M,$ are in one-to-one correspondence with the conjugacy classes $[\g],$ of elements $\g\in\G-\{e\}.$

\begin{teo}[Ledrappier {\cite[page 105]{ledrappier}}]\label{teo:ledrappier} For each H\"older cocycle $c:\G\times\bord\G\to V,$ there exists a H\"older-continuous map $F_c:T^1M\to V,$ such that for every $\g\in\G-\{e\},$ one has $$\l_c(\g)=\int_{[\g]} F_c.$$ The map $c\mapsto F_c$ induces a bijection between the set of cohomology classes of $V$-valued H\"older cocycles, and the set of Liv\v sic-cohomology classes, of H\"older-continuous maps from $T^1M\to V.$ 
\end{teo}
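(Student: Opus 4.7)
My approach is to construct an explicit inverse map $F\mapsto c_F$ via an asymptotic integration formula, verify the period identity directly from this formula, and then leverage Liv\v sic's Theorem \ref{teo:livsic3} for the (transitive Anosov) geodesic flow to package the constructions into a bijection of cohomology classes.

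\emph{Construction of the inverse.} Fix a basepoint $o\in\w M$, let $b_\xi$ denote the Busemann function on $\w M$ based at $\xi\in\bord\G$, and for $x\in\w M$ let $\w u_{x,\xi}\in T^1_x\w M$ be the unit vector aiming at $\xi$. For a H\"older $F:T^1M\to V$ with $\G$-invariant lift $\w F$, I propose
$$c_F(\g,\xi):=\lim_{t\to\infty}\biggl(\int_0^t\w F(\phi_s\w u_{\g^{-1}o,\xi})\,ds-\int_0^{\,t+b_\xi(o,\g^{-1}o)}\w F(\phi_s\w u_{o,\xi})\,ds\biggr).$$
The limit converges at an exponential rate because rays toward a common $\xi$ become exponentially close and $\w F$ is H\"older; the cocycle property of $c_F$ in $\g$ follows from $\G$-invariance of $\w F$ together with the cocycle identity for $b$; and H\"older continuity of $c_F(\g,\cdot)$ follows from standard estimates at $\bord\G$. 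For the period computation: since $\g$ fixes $\g_+$ we have $\g\w u_{\g^{-1}o,\g_+}=\w u_{o,\g_+}$, so the two integrands coincide along the axis, and using $b_{\g_+}(o,\g^{-1}o)=\l(\g)$ the formula telescopes to
$$c_F(\g,\g_+)=-\lim_{t\to\infty}\int_t^{t+\l(\g)}\w F(\phi_s\w u_{o,\g_+})\,ds=-\int_{[\g]}F,$$
since the orbit $(\phi_s\w u_{o,\g_+})_{s\ge 0}$ descends in $T^1M$ to a trajectory forward-asymptotic to the closed geodesic $[\g]$ and the integrand is H\"older. Up to a global sign absorbed into the definition, this is the desired period identity.

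\emph{Cohomology bijection via Liv\v sic.} Two H\"older cocycles $c,c'$ on $\bord\G$ are cohomologous if and only if they share all periods (a classical fact proved along the lines of Theorem \ref{teo:livsic3}), and by Liv\v sic's Theorem \ref{teo:livsic3} two H\"older maps $F,F':T^1M\to V$ are Liv\v sic-cohomologous if and only if their integrals over every closed geodesic coincide. Thus to obtain the bijection $[c]\leftrightarrow[F_c]$ it suffices to prove: (i) every H\"older cocycle $c$ has periods realizable by some H\"older $F:T^1M\to V$; (ii) every H\"older $F$ arises, up to cohomology, as $F_c$ for some $c$. Part (ii) is immediate from the explicit formula above together with the period identity: take $c=c_F$.

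\emph{Main obstacle: realizing periods.} Part (i) is the nontrivial direction and the main obstacle, since one must pass from the purely boundary datum $c$ to a H\"older function on $T^1 M$ without a direct geometric formula. I would use Bowen's Markov coding $(\E,\pi,r)$ of the geodesic flow: the coding identifies periodic orbits of the shift $\sigma$ with conjugacy classes $[\g]$, and assigns to each symbolic state a canonical element of $\G$ via the standard itinerary correspondence. Using $c$ evaluated at the corresponding boundary attractors, I would define $f^c:\E\to V$, and verify its H\"older regularity from H\"older continuity of $c(\g,\cdot)$ together with the exponential contraction intrinsic to the coding; the periods of $f^c$ then equal $\{\l_c(\g)\}_{[\g]}$ directly from the cocycle identity. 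Finally, Lemma \ref{lema:livsicgral} combined with the Markov-coding dictionary produces a H\"older $F_c:T^1M\to V$ realizing these periods, and $c_{F_c}\sim c$ is automatic from the period-matching criterion already established.
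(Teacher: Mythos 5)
The paper quotes this theorem from Ledrappier without reproducing a proof, so there is no in-paper argument to compare against; I assess your reconstruction on its own terms.

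Your explicit formula for $c_F$ is a correct $V$-valued Busemann-type construction, and the verification of the cocycle identity from $\G$-invariance of $\w F$ plus additivity of Busemann functions is sound. One small slip in the period computation: with the usual convention $b_\xi(p,q)=\lim_r\bigl(d(p,\xi_r)-d(q,\xi_r)\bigr)$ one has $b_{\g_+}(o,\g^{-1}o)=-|\g|$ (not $+|\g|$), since $\g$ translates \emph{toward} $\g_+$. Plugging this in gives $c_F(\g,\g_+)=\lim_t\int_{t-|\g|}^{t}\w F(\phi_s\w u_{o,\g_+})\,ds=\int_{[\g]}F$ directly, so there is no stray sign to absorb; your stated conclusion was off by a sign because of this.

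The genuine gap is part~(i), the passage from $c$ to a H\"older $F_c:T^1M\to V$. You propose to build a H\"older $f^c:\E\to V$ from $c$ and the Markov coding, then let ``the Markov-coding dictionary'' together with Lemma~\ref{lema:livsicgral} produce $F_c$ on $T^1M$. This step does not close. The coding map $\Pi:(\E\times\R)/\hat r\to T^1M$ is only bounded-to-one and injective off a residual set, so a H\"older function on $\E$ cannot be pushed forward to a well-defined, let alone H\"older, function on $T^1M$; and Lemma~\ref{lema:livsicgral} takes as input a translation cocycle \emph{on $T^1M$}, which you have not constructed. (It is also not clear that an arbitrary H\"older $f^c$ on $\E$ with the prescribed periods is cohomologous to the pullback $\pi^*F$ of a H\"older $F$ on $T^1M$; this is a nontrivial claim and cannot be assumed.) What is actually needed is a direct geometric construction of a $\G$-invariant translation cocycle over the geodesic flow on $T^1\w M$ from $c$ alone: for instance, twist the trivial bundle $T^1\w M\times V$ by the action $\g\cdot(\tilde p,v)=(\g\tilde p,\,v+c(\g,\tilde p^{+}))$, pass to the quotient flat $V$-bundle over $T^1M$, and trivialize it by a H\"older partition of unity subordinate to a cover by lifts of a fundamental domain; evaluating the resulting connection form along the flow direction yields a translation cocycle on $T^1M$ whose periods are $\l_c(\g)$, and only then does Lemma~\ref{lema:livsicgral} apply. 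This geometric step is the real content of Ledrappier's theorem, and the Markov coding cannot substitute for it because it lives on the wrong side of the (non-injective) coding map to carry H\"older regularity back to $T^1M$.
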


Two H\"older cocycles $c$ and $\vo c$ are \emph{dual cocycles}, if for every $\g\in\G-\{e\},$ one has $\l_{\vo c}(\g)=\l_c(\g^{-1}).$ If this is the case we will say that the pair $\{c,\vo c\}$ is a pair of dual cocycles. 

Denote by $\bord^2\G$ the set of pairs $(x,y)\in\bord\G^2,$ such that $x\neq y.$ A function $$[\cdot,\cdot]:\bord^2\G\to V$$ is a \emph{Gromov product} for a pair of dual cocycles $\{c,\vo c\},$ if for every $\g\in\G$ and $(x,y)\in\bord^2\G$ one has $$[\g x,\g y]-[x,y]=-(\vo c(\g,x)+c(\g,y)).$$

\begin{obs} The existence of these objects, for a given H\"older cocycle, is a consequence of Ledrappier's Theorem \ref{teo:ledrappier}, see \cite[Section 2]{quantitative} for details.
\end{obs}

We will now focus on real valued H\"older cocycles with non negative periods, i.e. such that $\l_c(\g)\geq0$ for every $\g\in\G-\{e\}.$ The \emph{exponential growth rate} of such cocycle is defined by $$h_c= \limsup_{s\to\infty} \frac{\log\#\{[\g]\in[\G]-\{e\}:\l_c(\g)\leq s\}}s\in(0,\infty],$$ (it is a consequence of Ledrappier's work \cite{ledrappier}, that a H\"older cocycle $c$ with non negative periods, verifies $h_c>0$). 

\begin{obs} A simple argument shows that two dual cocycles have the same exponential growth rate, i.e. $h_c=h_{\vo c}.$
\end{obs}

For $\g\in\G-\{e\},$ denote by $|\g|$ the length of the closed geodesic on $M$ associated to $[\g].$

We will need the following two lemmas.

\begin{lema}[Ledrappier {\cite[page 106]{ledrappier}}]\label{lema:piedra} Let $c$ be a H\"older cocycle with nonnegative periods and finite exponential growth rate, then $$\frac 1m<\inf_{\g\in\G-\{e\}} \frac{\l_c(\g)}{|\g|}<\sup_{\g\in\G-\{e\}}\frac{\l_c(\g)}{|\g|}<m,$$ for a positive $m.$
\end{lema}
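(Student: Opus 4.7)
My plan is to use Ledrappier's theorem to translate the statement into one about orbital averages of a H\"older function on $T^1M$, and then handle the two bounds separately.

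By Theorem \ref{teo:ledrappier} fix a H\"older-continuous $F_c\colon T^1M\to\R$ with $\l_c(\g)=\int_{[\g]}F_c$ for every $\g\in\G-\{e\}$. Since the length $|\g|$ of the closed geodesic $[\g]$ is its period, the quantity $\l_c(\g)/|\g|$ is exactly the time average of $F_c$ along $[\g]$. The upper bound is then completely elementary: $F_c$ is bounded on the compact space $T^1M$, so every orbital average is at most $\|F_c\|_\infty<\infty$. (Only the H\"older regularity of $F_c$, not the finiteness of $h_c$, is used here.)

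The lower bound is more delicate and is where the finiteness of $h_c$ enters. I would argue by contraposition: assuming $\inf_\g \l_c(\g)/|\g|=0$, I would deduce $h_c=\infty$. Since periodic-orbit measures are weak-$*$ dense in the space of invariant probability measures of the geodesic flow (a transitive Anosov flow, so Bowen's theorem supplies a Markov coding), the nonnegativity of the periods propagates to $\int F_c\,dm\geq 0$ for every invariant $m$, and by compactness there is an invariant $m_0$ with $\int F_c\,dm_0=0$. An Anosov closing/specification argument in a Markov box near $\mathrm{supp}\,m_0$ then produces ergodic measures $m_n$ with $\int F_c\,dm_n\to 0$ and $h(m_n)\geq c>0$, so by the variational principle $P_\phi(-tF_c)\geq h(m_n)-t\int F_c\,dm_n\to c>0$ for every fixed $t>0$. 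The standard periodic-orbit asymptotic
$$\sum_{[\g]\,:\,|\g|\leq T}e^{-t\l_c(\g)}\;\asymp\; e^{P_\phi(-tF_c)\,T}$$
for Anosov flows then gives an infinite abscissa of convergence for $\sum_\g e^{-t\l_c(\g)}$, which equals $h_c$, contradicting $h_c<\infty$.

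The main obstacle is the construction of $m_n$ with simultaneously small $F_c$-integral and positive entropy: a naive convex combination of $m_0$ with the Bowen--Margulis measure raises the entropy but also raises $\int F_c\,dm$, and a priori $m_0$ itself may have zero entropy. The proof therefore needs the finer symbolic-dynamical control afforded by the Markov coding and the H\"older regularity of $F_c$, which together with Liv\v sic's Theorem \ref{teo:livsic3} and Lemma \ref{lema:positiva} yield the required dichotomy: either the orbital averages are uniformly bounded below by a positive constant (so the lower bound holds directly), or the pressure $t\mapsto P_\phi(-tF_c)$ is strictly positive on $(0,\infty)$ (giving $h_c=\infty$).
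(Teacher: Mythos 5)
The paper does not prove this lemma; it is quoted directly from Ledrappier's article, so there is no in-paper argument to compare with. On its own terms, your reduction via Ledrappier's Theorem \ref{teo:ledrappier} and the upper bound are fine, and recasting the lower bound as positivity of the pressure $P(\phi,-tF_c)$ for all $t>0$ is a sensible strategy.

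The step you flag as ``the main obstacle'' is, however, a genuine gap, and the sentence that is supposed to close it does not. Lemma \ref{lema:positiva} runs the wrong way: its hypothesis is exactly the uniform positive lower bound on $\l_c(\g)/|\g|$ that the present lemma asserts, so invoking it here is circular, and Liv\v sic's Theorem \ref{teo:livsic3} only covers the degenerate case where every period vanishes. Your formulation of the obstacle is also stronger than what is true or needed: if $\sup\{h(m):\int F_c\,dm=0\}=0$, then by upper semicontinuity of entropy no sequence $m_n$ with $\int F_c\,dm_n\to0$ can have entropy bounded away from zero, yet one must still conclude $P(\phi,-tF_c)>0$ for every $t$. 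What actually forces this is a strict-monotonicity argument absent from your proposal: pass to a non-negative H\"older representative $\tilde F$ of the cohomology class (a sub-action, i.e.\ a positive Liv\v sic theorem in the style of Lopes--Thieullen), and note that the convex, non-increasing function $t\mapsto P(\phi,-t\tilde F)$ has derivative $-\int\tilde F\,d\mu_t$ with $\mu_t$ the fully supported Gibbs equilibrium state; if this derivative vanished on an interval then $\tilde F\equiv0$ and all periods vanish (so $h_c=\infty$ trivially), and otherwise the pressure decreases strictly to its limit $\sup\{h(m):\int F_c\,dm=0\}\geq0$ and hence is positive for every finite $t$. Some ingredient of this kind is what is missing; as written, the contraposition does not close.
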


\begin{lema}[{\cite[Section 2]{quantitative}}]\label{lema:funcionpositiva} Let $c:\G\times\bord\G\to\R$ be a H\"older cocycle with nonnegative periods and finite exponential growth rate, then the function $F_c$ is Li\v vsic-cohomologous to a positive function.
\end{lema}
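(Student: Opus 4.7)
The plan is to combine the previous three results in the excerpt: Ledrappier's Theorem \ref{teo:ledrappier}, Lemma \ref{lema:piedra}, and Lemma \ref{lema:positiva}. The argument is essentially a one-line assembly once everything is set up.

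First I would recall that by Ledrappier's Theorem \ref{teo:ledrappier}, the H\"older-continuous function $F_c:T^1M\to\R$ associated to the cocycle $c$ satisfies
\[
\int_{[\g]} F_c = \l_c(\g)
\]
for every $\g\in\G-\{e\}$, where $[\g]$ is viewed as a periodic orbit of the geodesic flow on $T^1 M$. Under the bijection between conjugacy classes $[\g]$ and periodic orbits $\tau$, the period $p(\tau)$ equals $|\g|$, the length of the closed geodesic associated to $[\g]$.

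Next, I would apply Lemma \ref{lema:piedra} to the cocycle $c$, which has nonnegative periods and finite exponential growth rate by hypothesis. This yields a constant $m>0$ such that
\[
\inf_{\g\in\G-\{e\}} \frac{\l_c(\g)}{|\g|} > \frac{1}{m}.
\]
Combining this with the identity from Ledrappier's theorem, I obtain, for every periodic orbit $\tau$ of the geodesic flow,
\[
\frac{1}{p(\tau)}\int_\tau F_c = \frac{\l_c(\g)}{|\g|} > \frac{1}{m} > 0,
\]
where $\g$ is a representative of the conjugacy class corresponding to $\tau$.

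Finally, since the geodesic flow on $T^1M$ is a (transitive) Anosov flow, Lemma \ref{lema:positiva} applies to $F_c$ with $k=1/m$, and concludes that $F_c$ is Liv\v sic-cohomologous to a positive H\"older-continuous function, which is what we want. There is no real obstacle here; the statement is essentially a packaging of the uniform lower bound on the ratio $\l_c(\g)/|\g|$ into the Liv\v sic-cohomology framework, and the only thing to verify carefully is the dictionary between conjugacy classes in $\G$ and periodic orbits of the geodesic flow, which is standard.
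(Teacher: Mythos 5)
Your proof is correct, and since the paper cites this lemma from elsewhere without giving a proof, the natural argument is exactly the one you give: Ledrappier's Theorem \ref{teo:ledrappier} identifies the integrals of $F_c$ over periodic orbits with the periods $\ell_c(\gamma)$, Lemma \ref{lema:piedra} provides the uniform positive lower bound $\ell_c(\gamma)/|\gamma| > 1/m$, and Lemma \ref{lema:positiva} (applicable because the geodesic flow of a closed negatively curved manifold is a transitive Anosov flow) converts that uniform lower bound into Liv\v sic-cohomology to a positive function. The dictionary you flag — that the period $p(\tau)$ of the periodic orbit associated to $[\gamma]$ is precisely $|\gamma|$ — is indeed the only hinge, and the paper has already set it up explicitly.
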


If $c$ has finite exponential growth rate then, following Patterson's construction, Ledrappier \cite{ledrappier} proves the existence of a \emph{Patterson-Sullivan} probability measure $\mu$ on $\bord\G$ of cocycle $h_cc,$ this is to say, $\mu$ verifies $$\frac{d\g_*\mu}{d\mu}(x)=e^{-h_cc(\g^{-1},x)}$$ for every $\g\in\G$ and $x\in\bord\G.$ 

\begin{teo}[Ledrappier \cite{ledrappier} page 102]\label{teo:medidas} Let $c$ be a H\"older cocycle with nonnegative periods. Then $c$ has finite exponential growth rate $h_c$ if and only if there exists a Patterson-Sullivan probability measure of cocycle $h_cc.$ If this is the case, the Patterson-Sullivan probability measure is unique.
\end{teo}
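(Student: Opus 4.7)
The plan is to translate the question into one about the geodesic flow of $M$ via the correspondence $c\leftrightarrow F_c$ of Theorem~\ref{teo:ledrappier}, and then to construct the Patterson--Sullivan measure from the unique equilibrium state of $-h_cF_c$.

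For the direction ``$h_c<\infty\Rightarrow$ existence of a PS measure'', I would first invoke Lemma~\ref{lema:piedra} to get $\l_c(\g)\geq|\g|/m$ for some $m>0$, and then Lemma~\ref{lema:funcionpositiva} to replace $F_c$ by a Liv\v sic-cohomologous strictly positive H\"older function $f:T^1M\to\R$. The reparametrization $\psi^f$ of the geodesic flow admits a Markov coding (Remark~\ref{obs:weak}); its closed orbits are in bijection with the conjugacy classes $[\g]\neq[e]$, with period $\int_{[\g]}F_c=\l_c(\g)$, so its topological entropy equals $h_c$, which forces $P(\phi,-h_cF_c)=0$ (via the pressure--reparametrization formula and Lemma~\ref{lema:entropia2}). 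Proposition~\ref{prop:ruellebowen} then produces a unique equilibrium state $\nu$ for $-h_cF_c$ on $T^1M$, ergodic under the geodesic flow.

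To manufacture the PS measure from $\nu$, I would lift $\nu$ to a $\G$-invariant Radon measure $\w\nu$ on $T^1\w M$, and work in Hopf coordinates $T^1\w M\cong\bord^2\G\times\R$. Choose a dual cocycle $\vo c$ of $c$ together with an associated Gromov product $[\cdot,\cdot]$ for $\{c,\vo c\}$ (whose existence is noted in the excerpt as a consequence of Theorem~\ref{teo:ledrappier}). The Gibbs property of $\nu$ combined with the identity
$$[\g x,\g y]-[x,y]=-(\vo c(\g,x)+c(\g,y))$$
should force a factorization
$$d\w\nu(x,y,t)=e^{-h_c[x,y]}\,d\mu(x)\,d\vo\mu(y)\,dt$$
for Radon measures $\mu,\vo\mu$ on $\bord\G$. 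Reading off the $\G$-equivariance of $\w\nu$ from the equivariance of $[\cdot,\cdot]$ then yields exactly $d\g_*\mu/d\mu=e^{-h_cc(\g^{-1},\cdot)}$, and normalizing $\mu$ produces the required PS probability of cocycle $h_cc$.

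For the converse, if $\mu$ is a PS probability of cocycle $\delta c$ for some $\delta>0$, I would run a shadow-lemma style argument using hyperbolicity of $\G$: the $\mu$-mass of a small shadow near the attractor $\g_+$ is comparable, up to a uniform multiplicative constant, to $e^{-\delta\l_c(\g)}$; a covering argument applied to $\mu(\bord\G)=1$ produces $\sum_{[\g]}e^{-\delta\l_c(\g)}<\infty$, whence $h_c\leq\delta<\infty$, while a matching Besicovitch-type packing yields $\delta\leq h_c$, so $\delta=h_c$. Uniqueness then comes from reversing the middle paragraph: any PS probability of cocycle $h_cc$ gives rise, through Hopf coordinates and the product construction above, to a $\G$-invariant and flow-invariant Radon measure on $T^1\w M$ whose projection is an equilibrium state of $-h_cF_c$, and uniqueness of the equilibrium state (Proposition~\ref{prop:ruellebowen}) pins $\mu$ down. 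The main obstacle is the middle paragraph, where one must rigorously match the Gibbs structure of $\nu$ on Bowen balls with the product form dictated by the cocycle algebra of $[\cdot,\cdot]$, and carefully account for the coboundary terms relating $F_c$ on $T^1\w M$ to the boundary cocycle $c$.
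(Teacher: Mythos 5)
The paper does not prove this statement; it cites Ledrappier and, immediately above the theorem, says explicitly that Ledrappier's argument uses \emph{Patterson's construction}: one builds $\mu$ directly as a weak-$*$ limit of weighted orbital Dirac masses (weighted via the function $F_c$ integrated along geodesic segments from a basepoint), and the transformation rule $d\g_*\mu/d\mu=e^{-h_cc(\g^{-1},\cdot)}$ falls out of the construction. The product/Bowen--Margulis structure and the identification with the measure of maximal entropy are then \emph{consequences}, as in Theorem~\ref{teo:reparametrisation}. Your proposal inverts this order: you start from the equilibrium state of $-h_cF_c$ (which certainly exists and is unique by Proposition~\ref{prop:ruellebowen}), lift it to $\bord^2\G\times\R$, and try to read off the PS measure as a marginal. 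That is a genuinely different route.

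The gap you yourself flag in the middle paragraph is a real one, and it is the crux. The Gibbs property of the equilibrium state on Bowen balls does not by itself yield the global factorization $d\w\nu=e^{-h_c[x,y]}\,d\vo\mu(x)\,d\mu(y)\,dt$. What you need is the \emph{local product structure} of equilibrium states for (transitive) Anosov flows, together with absolute continuity of the stable/unstable holonomies, so that the stable and unstable conditionals at different base points can be glued into two global boundary measures $\vo\mu,\mu$ with the Gromov-product correction accounting for the change of transversal. This is a nontrivial theorem in its own right (it is not recorded among the lemmas you cite), and matching the holonomy Jacobians to the cocycle identity for $[\cdot,\cdot]$ requires precisely the coboundary bookkeeping you mention in passing. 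Ledrappier's Patterson-construction route sidesteps all of this: it never needs to know that the equilibrium state has a local product structure, because the quasi-conformal boundary measure is built first and its equivariance is immediate. Your converse (shadow-lemma bound $\mu(\mathcal O)\asymp e^{-\delta\l_c(\g)}$ plus a covering/packing argument) is the standard adaptation of Sullivan's argument and is correct in outline, and your uniqueness argument via uniqueness of the equilibrium state is sound \emph{provided} the product factorization step is in place; so the whole proposal stands or falls on that one step.
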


Denote by $\mu$ and $\vo\mu$ the Patterson-Sullivan probability measures associated to $c$ and $\vo c$ respectively, and consider a Gromov product $[\cdot,\cdot],$ for the pair $\{c,\vo c\}.$ Remark that the measure $$e^{-h_c[x,y]}d\vo\mu(x)d\mu(y)$$ on $\bord^2\G,$ denoted from now on by $e^{-h_c[\cdot,\cdot]}\vo\mu\otimes\mu,$ is $\G$-invariant. The following theorem is crucial to understand the Weyl chamber flow.

\begin{teo}[The Reparametrizing Theorem \cite{quantitative}]\label{teo:reparametrisation} Let $c$ be a H\"older cocycle with nonnegative periods such that $h_c$ is finite. Then: \begin{enumerate}\item the action of $\G$ in $\bord^2\G\times\R$ via $c,$ that is, $$\g(x,y,s)=(\g x,\g y,s-c(\g,y)),$$ is proper and cocompact. Moreover, the flow $\psi$ on $\G\/(\bord^2\G\times\R),$ defined by $$\psi_t\G(x,y,s)=\G(x,y,s-t),$$ is conjugated to a H\"older reparametrization of the geodesic flow on $T^1 M.$ The conjugating map is also H\"older-continuous. The topological entropy of $\psi$ is $h_c.$ \item The measure $$e^{-h_c[\cdot,\cdot]}\vo\mu\otimes\mu\otimes ds$$ on $\bord^2\G\times\R,$ induces on the quotient $\G\/(\bord^2\G\times\R),$ a positive multiple of the probability measure of maximal entropy of $\psi.$
\end{enumerate}
\end{teo}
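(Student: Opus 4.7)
My plan is to realize the flow $\psi$ as a Hölder reparametrization of the geodesic flow of $M$ via Ledrappier's correspondence, and then to match the explicit Patterson-Sullivan expression with the Bowen-Ruelle equilibrium state associated to that reparametrization.

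First I would apply Ledrappier's Theorem \ref{teo:ledrappier} to $c$ to obtain a Hölder function $F_c: T^1M \to \R$ with periods $\int_{[\g]} F_c = \l_c(\g)$. Since $\l_c$ is nonnegative with finite growth rate $h_c$, Lemma \ref{lema:piedra} gives $\inf_\g \l_c(\g)/|\g| > 0$, and Lemma \ref{lema:positiva} then produces a positive Hölder $f: T^1M \to \R_+^*$ in the same Liv\v sic class as $F_c$. The reparametrized flow $\psi^f$ on $T^1M$ has closed-orbit periods $\l_c(\g)$, admits a Markov coding by Remark \ref{obs:weak}, and by Bowen's variational principle for flows with Markov codings its topological entropy equals the exponential growth rate of those periods, namely $h_c$.

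Next I would identify $T^1 \w M$ with $\bord^2 \G \times \R$ via Hopf parametrization associated to the Busemann cocycle $b$ of $\w M$, so that $\G$ acts by $\g\cdot_b(x,y,s) = (\g x, \g y, s - b(\g,y))$ and the geodesic flow lifts to translation in $\R$. Letting $\w f$ be the $\G$-invariant lift of $f$ and $I(x,y,s) = \int_0^s \w f(x,y,u)\,du$, I would construct a Hölder homeomorphism $\Phi(x,y,s) = (x, y, \eta(x,y,s))$ in which $\eta(x,y,\cdot)$ is a monotone time change built from $I$ and adjusted by an additive cochain $C: \bord^2\G \to \R$. Flow compatibility between $\psi$ and the lifted $\w\psi^f$ pins $\eta$ down up to $C$, while $\G$-equivariance between the $c$- and $b$-actions reduces to a cohomology equation for $C$ on $\bord^2\G$ whose right-hand side is (up to a sign fixed by orientation) $c(\g,y) - I(x,y, b(\g,y))$. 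Viewed as a Hölder cocycle on $\bord^2\G$, this has period $\l_c(\g) - \int_{[\g]} f = 0$ over every closed orbit $(\g_-,\g_+)$; Liv\v sic's Theorem \ref{teo:livsic3} applied to the transitive Anosov geodesic flow (after invoking Lemma \ref{lema:livsicgral} to pass to an integrand representative) then produces the desired Hölder $C$. Thus $\Phi$ descends to a Hölder conjugacy between $\psi$ on $\G\backslash(\bord^2\G \times \R)$ and $\psi^f$ on $T^1M$, from which properness and cocompactness of the $c$-action and the entropy equality $h_{\tex{top}}(\psi) = h_c$ follow at once.

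For the measure, the transformation laws from Theorem \ref{teo:medidas} and the defining equivariance of the Gromov product $[\cdot,\cdot]$ make $e^{-h_c[\cdot,\cdot]}\vo\mu\otimes\mu\otimes ds$ invariant under both the $c$-action and translation in $s$, so it descends to a finite $\psi$-invariant Radon measure $m$ on the quotient. I would transport $m$ to $T^1M$ through the conjugation and apply Lemma \ref{lema:entropia2}: identifying $m$ (up to scaling) with the measure of maximal entropy of $\psi^f$ reduces to showing that $m/f$ is the Bowen-Ruelle equilibrium state of $-h_c f$ for the geodesic flow described by formula (\ref{eq:equlibrio}). That equilibrium state lifts to a Gibbs measure on $T^1\w M$ whose conditionals on strong-stable and strong-unstable leaves transform like Patterson-Sullivan measures for $h_c\vo c$ and $h_c c$; the uniqueness clause in Theorem \ref{teo:medidas} then forces them to coincide with $\vo\mu$ and $\mu$, giving the claim. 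The hard part of the proof is the cohomological step in the construction of $\Phi$: checking that $c(\g,y)$ and the ``integrated'' cocycle $\int_0^{b(\g,y)} \w f$ differ by a Hölder coboundary on $\bord^2\G$, which requires transferring Liv\v sic's theorem from the Anosov geodesic flow to these boundary cocycles while keeping careful track of orientation conventions between the two Hopf parametrizations.
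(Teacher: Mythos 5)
Theorem~\ref{teo:reparametrisation} is imported from~\cite{quantitative} and not reproved in this paper, so there is no in-text argument to compare against; your outline nonetheless follows the route taken there. You correctly assemble the ingredients: Ledrappier's Theorem~\ref{teo:ledrappier} together with Lemmas~\ref{lema:piedra} and~\ref{lema:funcionpositiva} produce a positive H\"older $f$ whose reparametrization $\psi^f$ has periods $\ell_c(\gamma)$ and entropy $h_c$ via the Markov coding; the Hopf time change plus a Liv\v sic/Ledrappier coboundary (killing the defect cocycle, whose periods vanish because $f$ and $F_c$ are Liv\v sic-cohomologous) furnishes the $\G$-equivariant H\"older conjugacy; and the measure claim is pinned down by Lemma~\ref{lema:entropia2} combined with the uniqueness of Patterson--Sullivan measures in Theorem~\ref{teo:medidas}, which is essentially the same approach.
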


\begin{obs}\label{obs:F} Consider $F_c:T^1M\to\R$ given by Ledrappier's Theorem \ref{teo:ledrappier} for the cocycle $c.$ Lemma \ref{lema:funcionpositiva} implies that $F_c$ is Liv\v sic-cohomologous to a positive function. The reparametrization on Theorem \ref{teo:reparametrisation} is given by this positive function. 
\end{obs}

\section{The action by translations of $V$ on $\G\/(\bord^2\G\times V)$}\label{section:mixingweyl}

Recall that $M$ is a closed connected, negatively curved Riemannian manifold, $\G$ is its fundamental group (for any base point), and $V$ is a finite dimensional vector space.

Fix a H\"older cocycle $c:\G\times\bord\G\to V,$ and denote by $\cone_c,$ the smallest closed convex cone of $V,$ that contains the periods $\{\l_c(\g):\g\in\G-\{e\}\}.$ The \emph{dual cone} of $\cone_c$ is the set of linear forms that are nonnegative on this cone: $$\cone_c^*=\{\varphi\in V^*:\varphi|_{\cone_c}\geq0\}.$$ For $\varphi\in \cone_c^*,$ denote by $c_\varphi$ the real valued H\"older cocycle $\varphi\circ c,$ and by $h_\varphi$ the exponential growth rate of $c_\varphi,$ $$h_\varphi=\limsup_s\frac{\log \#\{[\g]:\varphi(\l_c(\g))\leq s\}}s.$$

A direct consequence of the Reparametrizing Theorem \ref{teo:reparametrisation} is the following.
\begin{samepage}
\begin{cor}\label{cor:propia} If there exists $\varphi\in \cone_c^*$ such that $h_\varphi$ is finite, then the action of $\G$ on $\bord^2\G\times V$ via $c,$ that is, $$\g(x,y,v)=(\g x, \g y,v-c(\g, y)),$$ is properly discontinuous. 
\end{cor}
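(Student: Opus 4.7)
The plan is to reduce the claim to the properness part of the Reparametrizing Theorem \ref{teo:reparametrisation}, applied to the real-valued cocycle $c_\varphi=\varphi\circ c$.

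First I would verify the hypotheses of Theorem \ref{teo:reparametrisation} for $c_\varphi$. Since $\varphi\in\cone_c^*$ and every period $\l_c(\g)$ lies in $\cone_c$, we have
\[
\l_{c_\varphi}(\g)=\varphi(c(\g,\g_+))=\varphi(\l_c(\g))\geq 0
\]
for all $\g\in\G-\{e\}$, so $c_\varphi$ has nonnegative periods. By hypothesis its exponential growth rate $h_\varphi$ is finite, so Theorem \ref{teo:reparametrisation}(1) applies: the action of $\G$ on $\bord^2\G\times\R$ defined by $\g(x,y,s)=(\g x,\g y,s-c_\varphi(\g,y))$ is proper.

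Next I would introduce the equivariant projection
\[
\Phi:\bord^2\G\times V\longrightarrow\bord^2\G\times\R,\qquad \Phi(x,y,v)=(x,y,\varphi(v)).
\]
A direct computation using $\varphi\circ c=c_\varphi$ shows that $\Phi(\g\cdot(x,y,v))=\g\cdot\Phi(x,y,v)$ for all $\g\in\G$, where on the right we use the $\G$-action via $c_\varphi$. In particular $\Phi$ is continuous and $\G$-equivariant.

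To conclude proper discontinuity, let $K\subset\bord^2\G\times V$ be compact. Then $\Phi(K)$ is compact in $\bord^2\G\times\R$, and if $\g\in\G$ satisfies $\g K\cap K\neq\vacio$, then by equivariance $\g\Phi(K)\cap\Phi(K)\neq\vacio$. By the properness established above, only finitely many such $\g$ exist, proving that the $\G$-action on $\bord^2\G\times V$ is properly discontinuous. There is no substantive obstacle here: the only thing to check carefully is that $\varphi\in\cone_c^*$ really forces $c_\varphi$ to have nonnegative periods, which is immediate from the definition of $\cone_c$ as the closed convex cone generated by the periods of $c$.
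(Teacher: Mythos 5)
Your argument is correct and matches the route the paper has in mind: the corollary is stated as a ``direct consequence'' of the Reparametrizing Theorem, and indeed the paper's later Proposition~\ref{prop:topo} uses exactly your equivariant fibration $\widehat\varphi:(x,y,v)\mapsto(x,y,\varphi(v))$ onto $\bord^2\G\times\R$ with the action via $c_\varphi$. Pulling back proper discontinuity along a continuous $\G$-equivariant map, after checking nonnegativity of the periods of $c_\varphi$ and applying Theorem~\ref{teo:reparametrisation}(1), is precisely the intended proof.
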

\begin{flushright}$\square$\end{flushright}
\end{samepage}

Denote by $\inte(\cone^*_c)$ the interior of $\cone^*_c.$ One has the following lemma.

\begin{lema} If $\varphi\in\cone_c^*$ is such that $h_\varphi<\infty,$ then $\varphi\in\inte(\cone_c^*),$ in particular $\inte(\cone^*_c)$ is nonempty. Moreover, for every $\theta\in\inte(\cone^*_c),$ one has $h_\theta<\infty.$
\end{lema}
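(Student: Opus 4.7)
The plan is to reduce both claims to Lemma~\ref{lema:piedra} applied to the scalar cocycle $c_\varphi = \varphi\circ c$, combined with the uniform bound $\|\ell_c(\gamma)\| \leq C|\gamma|$ coming from Ledrappier's Theorem. Since $\varphi\in\cone_c^*$ forces $\varphi(\ell_c(\gamma))\geq 0$, and $h_\varphi=h_{c_\varphi}<\infty$ by hypothesis, Lemma~\ref{lema:piedra} will produce $m>0$ with $\varphi(\ell_c(\gamma))\geq|\gamma|/m$ for every $\gamma\in\G-\{e\}$. On the other hand, Theorem~\ref{teo:ledrappier} writes $\ell_c(\gamma)=\int_{[\gamma]}F_c$ for a continuous (hence bounded) function $F_c\colon T^1M\to V$, which gives $\|\ell_c(\gamma)\|\leq\|F_c\|_\infty|\gamma|$. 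Combining the two inequalities yields the key estimate
\[ \varphi(\ell_c(\gamma))\ \geq\ \frac{\|\ell_c(\gamma)\|}{m\|F_c\|_\infty}. \]

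For the first claim I would then propagate this bound from the periods to the whole cone. By definition every $v\in\cone_c$ is a limit of finite nonnegative combinations $v_n=\sum_i t_i^n\,\ell_c(\gamma_i^n)$ with $t_i^n\geq 0$, so linearity and the triangle inequality give $\varphi(v_n)\geq\|v_n\|/(m\|F_c\|_\infty)$, and passing to the limit produces $\varphi(v)\geq C\|v\|$ on $\cone_c$ with $C=1/(m\|F_c\|_\infty)$. This uniform lower bound is stable under dual-norm perturbations: any $\varphi'$ with $\|\varphi'-\varphi\|<C$ still satisfies $\varphi'|_{\cone_c}\geq 0$, so belongs to $\cone_c^*$. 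Hence $\varphi\in\inte(\cone_c^*)$, which already implies that the interior is nonempty.

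For the second claim, given $\theta\in\inte(\cone_c^*)$ I would choose $\varepsilon>0$ with $\theta+\psi\in\cone_c^*$ whenever $\|\psi\|\leq\varepsilon$. For any nonzero $v\in\cone_c$, Hahn--Banach furnishes $\psi$ of norm $\varepsilon$ with $\psi(v)=-\varepsilon\|v\|$, and $\theta+\psi\in\cone_c^*$ forces $\theta(v)\geq\varepsilon\|v\|$ on $\cone_c$. Using the first part's consequence $\|\ell_c(\gamma)\|\geq\varphi(\ell_c(\gamma))/\|\varphi\|\geq|\gamma|/(m\|\varphi\|)$, this gives $\theta(\ell_c(\gamma))\geq C'|\gamma|$ with $C'=\varepsilon/(m\|\varphi\|)$, so
\[ \#\{[\gamma]:\theta(\ell_c(\gamma))\leq s\}\ \leq\ \#\{[\gamma]:|\gamma|\leq s/C'\}. \]
The right-hand side grows at most exponentially in $s$, because the geodesic flow on $T^1M$ has finite topological entropy. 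Hence $h_\theta<\infty$.

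The only subtle step is the bookkeeping in extending the pointwise inequality on periods to a uniform linear lower bound on all of $\cone_c$; the rest is standard cone duality together with one application of Hahn--Banach. No deep new input is needed beyond Lemma~\ref{lema:piedra} and the representation of periods in Theorem~\ref{teo:ledrappier}.
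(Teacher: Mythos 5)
Your proof is correct, and it reaches the same conclusion by a genuinely different (and more elementary) route. For the first claim, the paper applies Lemma~\ref{lema:piedra} to get $\varphi\bigl(\tfrac1{|\g|}\int_{[\g]}F_c\bigr)>k>0$ and then invokes Anosov's closing lemma to promote this to $\varphi(\int F_cdm)>k$ for \emph{every} $\phi$-invariant probability $m$; the interior statement then follows because $\{\int F_cdm:m\in\cal M^\phi\}$ is a compact convex set generating $\cone_c$. You avoid the dynamical step entirely: you convert the same input from Lemma~\ref{lema:piedra} into $\varphi(\l_c(\g))\geq\|\l_c(\g)\|/(m\|F_c\|_\infty)$ using the elementary bound $\|\l_c(\g)\|\leq\|F_c\|_\infty|\g|$, then propagate to all of $\cone_c$ by linearity, the triangle inequality and closure, arriving directly at a uniform lower bound $\varphi\geq C\|\cdot\|$ that makes the stability under dual-norm perturbation explicit. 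This is a bare-hands replacement for the compactness-of-$\cal M^\phi$ argument. For the second claim, the paper compares $\theta$ to the given $\varphi$: positivity of $\theta$ on the compact base gives $\varphi\leq a\theta$ on $\cone_c$, hence $h_\theta\leq ah_\varphi<\infty$. You instead establish a linear lower bound $\theta(\l_c(\g))\geq C'|\g|$ and appeal to finiteness of the topological entropy of the geodesic flow to bound $\#\{[\g]:|\g|\leq T\}$. Both are valid; the paper's version stays inside the cocycle framework and uses only the finiteness of $h_\varphi$ as external input, while yours uses the geodesic flow's entropy directly, which is morally the same fact. Net effect: your argument trades the measure-theoretic compactness step for a more concrete estimate on periods, which some may find cleaner, but it does not yield anything the paper's proof does not already contain.
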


\begin{proof}Consider the function $F_c:T^1 M\to V$ associated to $c.$ One has $$ \varphi(\int_{[\g]} F_c)= \varphi(\l_c(\g))\geq0.$$ Moreover, since $h_\varphi<\infty,$ Ledrappier's Lemma \ref{lema:piedra}, applied to the H\"older cocycle $c_\varphi,$ implies that $$\varphi(\frac 1{|\g|}\int_{[\g]}F_c)=\frac 1{|\g|}\varphi(\l_c(\g))>k>0,$$ for some positive $k$ and every $\g\in\G-\{e\}.$ Anosov's closing Lemma (c.f. Shub \cite{shub}) states that the convex combinations of the Lebesgue measures on periodic orbits, are dense in $\cal M^\phi,$ thus 
\begin{itemize}
 \item[-] $\varphi(\int F_cdm)>k$ for every $\phi$-invariant probability measure $m,$ \item[-] the set $$\{\int F_c dm:m\in\cal M^\phi\}$$ is compact and generates the cone $\cone_c.$
\end{itemize}
Hence, $\varphi$ is positive on the cone $\cone_c-\{0\},$ i.e. $\varphi\in\inte(\cone_c^*).$ 

If $\theta$ belongs to the interior of $\cone^*_c,$ then $\theta|_{\cone_c-\{0\}}>0.$ Hence, there exists a positive $a$ such that $\varphi(v)\leq a\theta(v),$ for all $v\in\cone_c.$ This implies that $h_\theta\leq a h_\varphi<\infty.$ This finishes the proof.
\end{proof}

Assume from now on the existence of $\varphi\in \cone_c^*$ with finite $h_\varphi.$ We then have a natural map between $\P(\inte(\cone^*_c))$ and $\P(\cone_c)$ as follows. Fix $F_c:T^1 M\to V$ associated to $c.$

\begin{defi} For $\varphi\in \inte(\cone_c^*),$ denote by $m_\varphi$ the equilibrium state, on $T^1 M,$ of the function $-h_\varphi\varphi\circ F_c$ (recall Proposition \ref{prop:ruellebowen}). The \emph{dual direction} of $\R_+\varphi,$ is the direction in $\cone_c$ given by the vector $$\int F_cdm_\varphi,$$ and is denoted by $\uu\varphi\in\P(\cone_c).$ 
\end{defi}

\begin{obs} A change in the Liv\v sic-cohomology class of $F_c$ does not change the value of the integral of $F_c$ over any $\phi$-invariant measure. Hence $\uu\varphi$ is well defined, independently of the choice of $F_c.$ Remark also that if $t\in\R_+,$ then $h_{t\varphi}=h_\varphi/t,$ hence, the dual direction of $\R_+\varphi,$ only depends on the direction given by $\varphi.$
\end{obs}

Fix also a dual cocycle $\vo c$ of $c,$ and a Gromov product $[\cdot,\cdot]:\bord^2\G\to V$ for the pair $\{c,\vo c\}.$ Denote by $\mu_\varphi$ and $\vo\mu_\varphi,$ the Patterson-Sullivan probability measures of cocycles $h_\varphi c_\varphi$ and $h_\varphi\vo c_\varphi$ respectively. The function $$[\cdot,\cdot]_\varphi= \varphi \circ[ \cdot,\cdot]$$ is a Gromov product for the pair $\{c_\varphi, \vo c_\varphi\}.$ Denote by $\Pats_\varphi,$ the measure on $\G\/(\bord^2\G\times V),$ induced by the measure $$\widetilde{\Pats_\varphi}=e^{-h_\varphi[\cdot,\cdot]_\varphi} \vo\mu_\varphi \otimes \mu_\varphi\otimes\Leb_V,$$ where $\Leb_V$ is a fixed Lebesgue measure on $V.$ The measure $\Pats_\varphi$ is called the \emph{Bowen-Margulis measure} of the pair $\{c,\vo c\}$ for the linear form $\varphi.$


Choose a vector $u_\varphi\in\uu\varphi,$ such that $\varphi(u_\varphi)=1,$ and consider the flow $\om^\varphi=(\om^\varphi_t)_{t\in\R}$ on $\G\/(\bord^2\G\times V),$ induced on the quotient by $$(x,y,v)\mapsto(x,y,v-tu_\varphi).$$

\begin{prop}[Straightening the action of $V$]\label{prop:topo} For every $\varphi\in\cone_c^*$ such that $h_\varphi<\infty,$ there exists a H\"older reparametrization of the geodesic flow $\psi=\psi^{c,\varphi},$ a H\"older-conti\-nuous map $f:T^1M\to \ker \varphi,$ with zero mean for the probability measure of maximal entropy of $\psi,$ denoted by $m^\#,$ i.e. $$\int_{T^1 M} fdm^\#=0,$$ and a H\"older-continuous homeomorphism $$\vo E:\G\/(\bord^2\G\times V)\to T^1M\times\ker\varphi,$$ that conjugates the flow $\om^\varphi,$ with the flow $\widehat\psi=(\widehat\psi_t)_{t\in\R}$ on $T^1M\times \ker\varphi,$ defined by \begin{equation}\label{equation:rompebola}\widehat\psi_t(p,v_0)=(\psi_t (p),v_0-\int_0^tf(\psi_sp)ds).\end{equation} The map $\vo E$ also conjugates the actions of $\ker\varphi,$ on $\G\/(\bord^2\G\times V)$ and on $T^1M\times\ker\varphi$ (by translation on the fibers), and is an isomorphism, up to a multiplicative constant, between the measures $\Pats_\varphi$ and $m^\#\otimes\Leb_{\ker\varphi}.$
\end{prop}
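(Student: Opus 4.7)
The plan is to view $\G\/(\bord^2\G\times V)\to T^1M$ as a $\ker\varphi$-principal bundle over a reparametrized geodesic flow and to trivialize it via the Liv\v sic principle that equal periods force cohomologous cocycles. Decompose $V=\R u_\varphi\oplus\ker\varphi$ through $\varphi$, so $c=c_\varphi u_\varphi+c_0$ with $c_0:\G\times\bord\G\to\ker\varphi$ a H\"older cocycle. Theorem \ref{teo:reparametrisation} applied to the real cocycle $c_\varphi$ provides a H\"older reparametrization $\psi$ of the geodesic flow together with a H\"older homeomorphism $\Xi:\G\/(\bord^2\G\times\R)\to T^1M$ conjugating the translation flow with $\psi$, and identifies the induced measure on $T^1M$ of $e^{-h_\varphi[\cdot,\cdot]_\varphi}\vo\mu_\varphi\otimes\mu_\varphi\otimes\Leb_{\R u_\varphi}$ with a positive multiple of $m^\#$. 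The forgetful map $[x,y,v]\mapsto\Xi[x,y,\varphi(v)]$ makes $\G\/(\bord^2\G\times V)\to T^1M$ a $\ker\varphi$-principal bundle, of which $\om^\varphi$ is a lift of $\psi$.

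A direct computation yields the holonomy of this bundle around a periodic $\psi$-orbit $[\g]$: using $c(\g,\g_+)=c_\varphi(\g,\g_+)u_\varphi+\l_{c_0}(\g)$ together with $\g\g_+=\g_+$, one checks that $\om^\varphi$ after time $T_\g:=c_\varphi(\g,\g_+)$ sends $[\g_-,\g_+,v]$ to $[\g_-,\g_+,v+\l_{c_0}(\g)]$, so the holonomy equals $\l_{c_0}(\g)$. Applying Lemma \ref{lema:livsicgral} to the $\ker\varphi$-valued translation cocycle on $T^1M\times\R$ describing this holonomy produces a H\"older $f:T^1M\to\ker\varphi$ with $\int_{[\g]}^\psi f=-\l_{c_0}(\g)$, so that the model flow $\widehat\psi$ of (\ref{equation:rompebola}) has holonomy $-\int_{[\g]}^\psi f=\l_{c_0}(\g)$ matching the bundle. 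By Liv\v sic's Theorem \ref{teo:livsic3}, this equality of periods across every periodic orbit upgrades to a H\"older conjugation $\vo E:\G\/(\bord^2\G\times V)\to T^1M\times\ker\varphi$ intertwining $\om^\varphi$ with $\widehat\psi$ and the two $\ker\varphi$-translation actions; the primitive $R(x,y,s):=\int_0^s f(\Xi[x,y,u])\,du$ combined with the Liv\v sic coboundary provides an explicit formula for the $\ker\varphi$-component of $\vo E$.

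For the mean-zero property, the definition of the dual direction gives $\int F_c\,dm_\varphi=(\int F_{c_\varphi}\,dm_\varphi)u_\varphi$, whose $\ker\varphi$-projection yields $\int F_{c_0}\,dm_\varphi=0$. Letting $r$ denote the positive reparametrizing function (Liv\v sic-cohomologous under $\phi$ to $F_{c_\varphi}$, Remark \ref{obs:F}), one has $\int_{[\g]}^\phi f r=\int_{[\g]}^\psi f=-\int_{[\g]}^\phi F_{c_0}$; by Theorem \ref{teo:livsic3}, $f r+F_{c_0}$ is Liv\v sic-cohomologous to zero under $\phi$, so $\int(f r+F_{c_0})\,dm_\varphi=0$. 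Lemma \ref{lema:entropia2} gives $dm^\#/dm_\varphi=r/\int r\,dm_\varphi$, whence
$$\int_{T^1M} f\,dm^\#=\frac{\int f r\,dm_\varphi}{\int r\,dm_\varphi}=\frac{-\int F_{c_0}\,dm_\varphi}{\int r\,dm_\varphi}=0.$$
The measure assertion now decomposes $\Leb_V=\Leb_{\R u_\varphi}\otimes\Leb_{\ker\varphi}$: the base pushforward is handled by Theorem \ref{teo:reparametrisation}(2), and $\vo E$ is fiberwise a translation on $\ker\varphi$ so sends $\Leb_{\ker\varphi}$ to itself.

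The main obstacle is the bundle trivialization step: turning the equality of $\psi$-periods into a H\"older-continuous bundle isomorphism (as opposed to merely a cohomology statement) requires carefully bridging Liv\v sic's theorem with the $\G$-equivariance on the cover, or equivalently invoking the Markov coding of $\psi$ to produce the coboundary correction in a H\"older way; once this is done, the remaining verifications (flow-intertwining, $\ker\varphi$-equivariance, measure identification) reduce to the computations sketched above.
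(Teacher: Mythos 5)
Your overall strategy is the one the paper follows: use the fibration $\widehat\varphi$ to present $\G\/(\bord^2\G\times V)\to T^1M$ as a $\ker\varphi$-principal bundle carrying a lift of $\psi$, and identify the cocycle governing the fiberwise displacement via Liv\v sic theory. The holonomy computation (fiber shift over the periodic orbit of $[\g]$ equals $\l_{c_0}(\g)$), the mean-zero computation via $\int F_c\,dm_\varphi\in\R u_\varphi$ together with $dm^\#/dm_\varphi=r/\int r\,dm_\varphi$, and the measure identification all reproduce the paper's steps.

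The gap is exactly where you flag it, and the proposed resolution does not close it. You apply Lemma \ref{lema:livsicgral} to ``the $\ker\varphi$-valued translation cocycle describing this holonomy,'' but no translation cocycle $\k:T^1M\times\R\to\ker\varphi$ is available until the bundle has already been trivialized: such a $\k$ is precisely the datum one reads off from a $\ker\varphi$-equivariant H\"older identification of $\G\/(\bord^2\G\times V)$ with $T^1M\times\ker\varphi$, so invoking Lemma \ref{lema:livsicgral} at this stage is circular. Liv\v sic's Theorem \ref{teo:livsic3} cannot supply the trivialization either; it promotes vanishing of all periods of a \emph{given} translation cocycle on a fixed flow to a coboundary for that same cocycle, and does not on its own produce a H\"older conjugation between two $\ker\varphi$-extensions of $\psi$ from equality of holonomies. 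The paper resolves the trivialization directly and cheaply: since $\ker\varphi$ is a vector space, hence contractible, the principal $\ker\varphi$-bundle $E\circ\widehat\varphi:\G\/(\bord^2\G\times V)\to T^1M$ admits a global H\"older section, and the free, transitive, fiber-preserving $\ker\varphi$-action then yields a H\"older, $\ker\varphi$-equivariant trivialization. Only after this does one write $\Psi_t(p,v_0)=(\psi_t p,\,v_0-\k(p,t))$, apply Lemma \ref{lema:livsicgral} to $\k$, and conjugate $\Psi$ to $\widehat\psi$; once that step is in place, the remainder of your argument is correct and agrees with the paper's.
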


\begin{proof} Consider the action of $\G$ on $\bord^2\G\times\R$ via $c_\varphi.$ Then one has a $\G$-equivariant fibration $\widehat\varphi:\bord^2\G\times V\to \bord^2\G\times\R$ with fiber $\ker\varphi,$ given by $$\widehat\varphi (x,y,v)=(x,y,\varphi(v)).$$ The measure $\w{\Pats_\varphi}$ disintegrates over the measure $$e^{-h_\varphi[\cdot, \cdot]_\varphi} \vo\mu_\varphi \otimes \mu_\varphi \otimes\Leb_\R$$ on $\bord^2\G\times\R,$ with conditional measures the Lebesgue measure on $\ker\varphi.$

Since $h_\varphi$ is finite, the Reparametrizing Theorem \ref{teo:reparametrisation} applies and thus, the action of $\G$ on $\bord^2\G\times\R$ via $c_\varphi,$ is properly discontinuous. Moreover there exists a H\"older-continuous homeomorphism $E:\G\/(\bord^2\G\times\R)\to T^1M,$ that conjugates the translation flow with a reparametrization of the geodesic flow. Denote this reparametrization by $\psi.$ The image of the measure induced on the quotient by $$e^{-h_\varphi[\cdot,\cdot]_\varphi}\vo \mu_\varphi \otimes \mu_\varphi \otimes \Leb_\R,$$ is sent by $E,$ to a positive multiple of the (unique) probability measure of maximal entropy of $\psi.$ 

The functions $\varphi\circ F_c$ and $F_{c_\varphi}$ are Liv\v sic-cohomologous, since they have the same period, for every periodic orbit of the geodesic flow.  Lemma \ref{lema:funcionpositiva} implies then that, $\varphi\circ F_c$ is Liv\v sic-cohomolgous to a positive function, hence we can (and will) assume that $\varphi\circ F_c>0.$ Remark \ref{obs:F} states that the flow $\psi$ can be taken as the re\-pa\-ra\-me\-tri\-za\-tion of the geodesic flow $\phi$ by $\varphi\circ F_c.$ The probability measure of maximal entropy of $\psi$ is ${m_\varphi}^\#$ (recall that $m_\varphi$ is the equilibrium state of $-h_\varphi\varphi\circ F_c$ and use Lemma \ref{lema:entropia2}).

Abusing notation, denote by $$\widehat\varphi:\G\/(\bord^2\G\times V)\to \G\/(\bord^2\G\times\R),$$ the map induced on the quotients by $\widehat\varphi:\bord^2\G\times V\to \bord^2\G\times\R.$ For every $u\in V,$ one has that $$E\circ\widehat\varphi(x,y,v-u)=\psi_{\varphi(u)}(E(x,y,\varphi(v))),$$ in particular the flow $\om^\varphi$ is (semi)conjugated to $\psi$ by $E\circ\widehat\varphi,$ i.e. for every $t\in\R$ one has $$E\circ\widehat\varphi\circ\om^\varphi_t=\psi_t\circ E\circ\widehat\varphi.$$

The action of the abelian group $\ker \varphi,$ on $\bord^2\G\times V,$ commutes with the action of $\G$ and preserves the fibers $\widehat\varphi^{-1}(x,y,t)$ of $\widehat\varphi.$ Hence we have an action of $\ker\varphi$ on the quotient, and one finds that $$E\circ\widehat\varphi:\G\/(\bord^2\G\times V)\to T^1M$$ is a vector bundle with fiber $\ker \varphi,$ and the group $\ker\varphi$ acts by H\"older-continuous homeomorphisms on $\G\/(\bord^2\G\times V)$ preserving the fibers, and acting transitively on them. Using the zero section of a vector bundle, and the action of $\ker \varphi,$ one can trivialize this bundle. Hence, $\G\/(\bord^2\G\times V)$ is (H\"older) isomorphic to $T^1M\times\ker\varphi,$ and this isomorphism is $\ker\varphi$-equivariant.

Denote by $\Psi=(\Psi_t)_{t\in\R}$ the flow on $T^1M\times\ker\varphi,$ corresponding to the flow $\om^\varphi$ via this last identification. Since $\om^\varphi$ commutes with the action of $\ker\varphi,$ the same occurs for $\Psi,$ and thus we can write $$\Psi_t(p,v_0)=(\psi_t(p),v_0-\k(p,t)),$$ where $\k:T^1M\times \R\to\ker\varphi$ is a translation cocycle over $\psi.$ Lemma \ref{lema:livsicgral} implies the existence of a H\"older-continuous map $f:T^1M\to\ker \varphi,$ such that the cocycles $\k$ and $\k_f$ are Liv\v sic-cohomologous (for the flow $\psi$). The flow $\Psi$ is hence conjugated to the flow $\widehat\psi=(\widehat\psi_t)_{t\in\R}$ on $T^1M\times\ker \varphi,$ defined by $$\widehat{\psi_t}(p,v)=(\psi_t(p),v-\int_0^tf(\psi_s(p))ds).$$ Denote by $\vo E:\G\/(\bord^2\G\times V)\to T^1M\times\ker\varphi,$ the composition of the trivialization of $\G\/(\bord^2\G\times V)$ defined above, with this last conjugacy between $\Psi$ and $\widehat\psi.$ By definition, $\vo E$ conjugates the flows $\om^\varphi$ and $\widehat\psi,$ and is $\ker\varphi$-equivariant.

We remark that the image by $\vo E,$ of the measure $\Pats_\varphi$ on $T^1M\times \ker\varphi,$ is a measure that disintegrates as a $\ker\varphi$-invariant measure on the fibers, and a positive constant multiple of ${m_\varphi}^\#$ on $T^1M.$ This measure is then a positive constant multiple of ${m_\varphi}^\# \otimes \Leb_{\ker \varphi}.$ 

It remains to check that $\int_{T^1M}fd{m_\varphi}^\#=0.$ In order to do this, recall that $\varphi(u_\varphi)=1$ and that $u_\varphi$ is collinear to the vector $\int F_cdm_\varphi,$ hence \begin{equation}\label{eq:1}\int F_cdm_\varphi=u_\varphi\int\varphi\circ F_cdm_\varphi.\end{equation}

\begin{figure}[h]\label{figure:figura1}
\begin{center}
\psfrag{a}{$\w\psi_{\varphi(\l_c(\g))}(\cdot)$}\psfrag{b}{$\l_c^0(\g)$}\psfrag{c}{$\textrm{fiber over $p$}$}
\includegraphics[height=7cm]{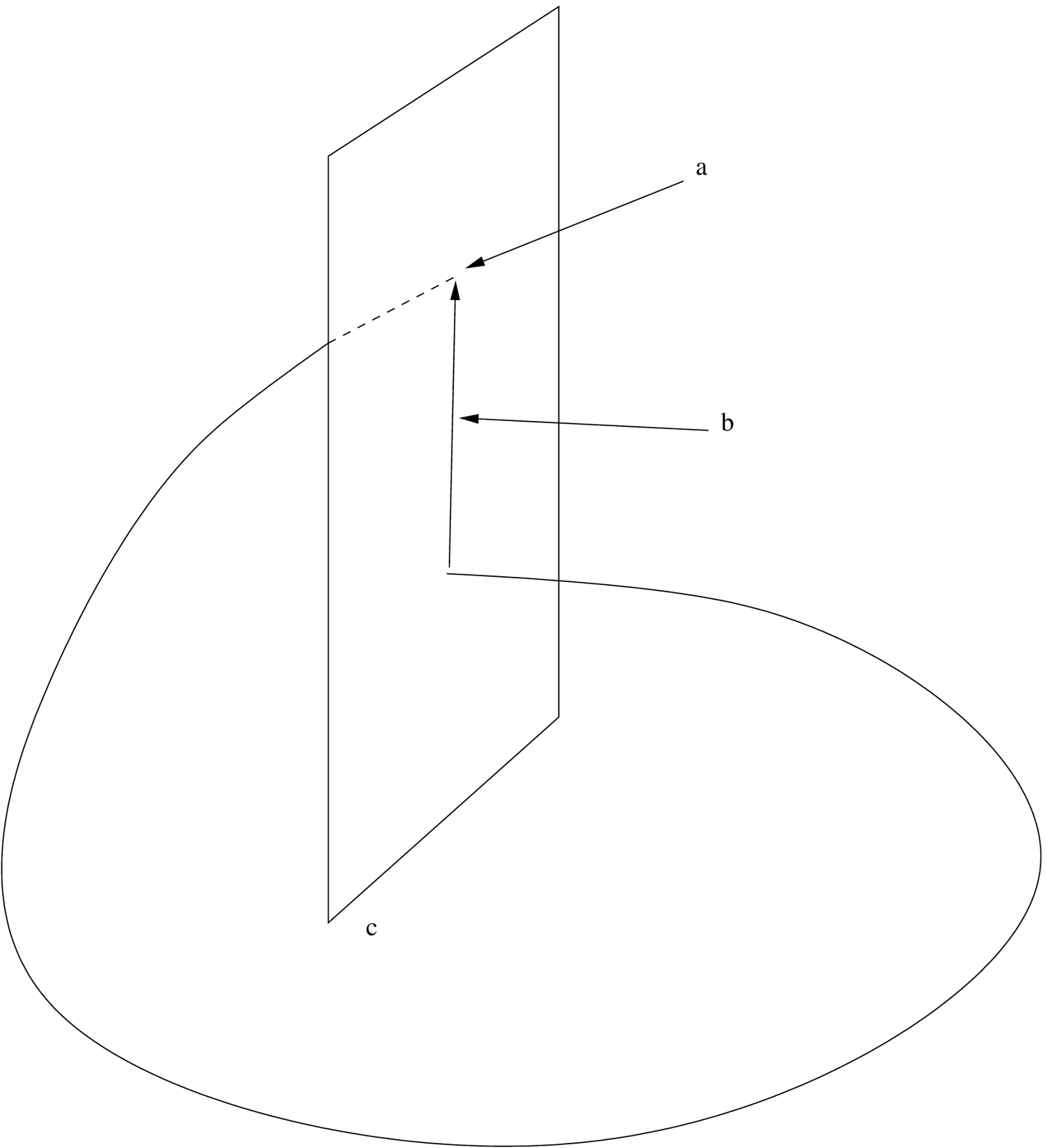}
\caption{If $p\in T^1M$ belongs to the periodic orbit associated to $[\g],$ the translation on the fiber $\ker\varphi$ by the flow $\widehat\psi,$ at the returning time, is given by $\l_c^0(\g).$} 
\end{center}
\end{figure}

For every $\g\in\G-\{e\},$ let $\l_c^0(\g)$ be the projection of the period $\l_c(\g)$ on $\ker\varphi,$ using the decomposition $V=\ker\varphi\oplus\uu\varphi.$ Remark that, for any $v\in V$ and $\g\in\G-\{e\},$ one has $$\g(\g_-,\g_+,v+\l^0_c(\g))=(\g_-,\g_+,v-\l_{c_\varphi}(\g)u_\varphi)=\om^\varphi_{\l_{c_\varphi}(\g)}(\g_-,\g_+,v).$$ This is to say, $\l^0_c(\g)$ is the displacement on $\ker\varphi$ of the flow $\om^\varphi,$ over a point of the form $(\g_-,\g_+,v),$ on the return time $\varphi(\l_c(\g))=\l_{c_\varphi}(\g).$

Consider also $F_c=F^0_c+(\varphi\circ F_c )u_\varphi,$ using this same decomposition. Equation (\ref{eq:1}) implies that $$\int_{T^1M} F^0_cdm_\varphi=0,$$ and that $$\l_c^0(\g)=\int_{[\g]} F^0_c(\phi_sp) ds,$$ where $\phi$ is the geodesic flow of $M.$ Since $\widehat\psi$ and $\om^\varphi$ are conjugated, one has $$\l^0_c(\g)=-\int_{[\g]^\#} f=-\int_{[\g]} f\varphi\circ F_c$$ (see the above figure), where $[\g]^\#$ is the $\psi$-invariant measure associated to the periodic orbit $[\g],$ defined by equation (\ref{equation:numeral}). Liv\v sic's Theorem \ref{teo:livsic3} implies that, the functions $F^0_c$ and $-f\varphi\circ F_c$ are Liv\v sic-cohomologous for the flow $\phi,$ thus $$0=\int F^0_cdm_\varphi=-\int f\varphi\circ F_cdm_\varphi=-\int fd{m_\varphi}^\#\int\varphi\circ F_cdm_{\varphi}.$$ This finishes the proof.
\end{proof}

\subsubsection*{Mixing properties of the action of $V$ on $\G\/(\bord^2\G\times V)$}

Now that we have a good description of $\G\/(\bord^2\G\times V),$ together with the action of $V,$ we can use Markov codings and a theorem of Thirion \cite{thirionmixing}, to prove a mixing property.

Consider $\varphi\in \cone_c^*,$ with $h_\varphi<\infty,$ and $u_\varphi\in\uu\varphi$ such that $\varphi(u_\varphi)=1.$ For $v_0\in\ker\varphi$ and $t\in\R^+,$ denote by $\om_t^{\varphi,v_0}:\G\/(\bord^2\G\times V)\to \G\/(\bord^2\G\times V),$ the map induced on the quotient by $$(x,y,v)\mapsto(x,y,v-tu_\varphi-\sqrt t v_0).$$ 

If $|\cdot|$ is an euclidean norm on $V,$ denote by $I=I^{|\cdot|}:\ker\varphi\to\R$ the function defined by \begin{equation}\label{eq:I}I(v)=\frac{|v|^2|u_\varphi|^2-\<v,u_\varphi\>^2}{|u_\varphi|^2}.\end{equation}

\begin{teo}\label{teo:melange} Let $c:\G\times\bord\G\to V$ be a H\"older cocycle, such that the group generated by its periods is dense in $V.$ Fix a linear form $\varphi\in \cone_c^*$ such that $h_\varphi<\infty.$ There there exists an euclidean norm $|\cdot|$ on $V,$ such that the map $\om_t^{\varphi,v_0}$ verifies: given two compactly supported continuous functions $f_0,f_1:\G\/(\bord^2\G\times V)\to\R,$ one has $$(2\pi t)^{(\dim V-1)/2}\Pats_\varphi( f_0\cdot f_1\circ\om_t^{\varphi,v_0})\rightarrow c e^{-\II(v_0)/2}\Pats_\varphi(f_0)\Pats_\varphi(f_1),$$ as $t\to\infty,$ for some positive constant $c.$ 
\end{teo}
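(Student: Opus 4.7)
The first move is to transfer the problem to the model provided by Proposition \ref{prop:topo}. There one has a Hölder-continuous homeomorphism $\vo E:\G\/(\bord^2\G\times V)\to T^1M\times\ker\varphi$ that conjugates $\om^\varphi$ with the skew-product flow $\widehat\psi_t(p,w)=(\psi_t p,\,w-\int_0^t f(\psi_s p)\,ds)$, is equivariant for the $\ker\varphi$-action by fibre translations, and sends $\Pats_\varphi$ to a positive multiple of $m^\#\otimes\Leb_{\ker\varphi}$. Under this identification the map $\om_t^{\varphi,v_0}$ becomes the further translation by $-\sqrt t\,v_0$ on the fibre. Setting $\sigma_t(p)=\int_0^t f(\psi_s p)\,ds$, the statement therefore reduces to proving that
\begin{equation*}
(2\pi t)^{(\dim V-1)/2}\int_{T^1M\times\ker\varphi}\!\!f_0(p,w)\,f_1\bigl(\psi_t p,\,w-\sigma_t(p)-\sqrt t\,v_0\bigr)\,dm^\#(p)\,dw
\end{equation*}
converges to $c\,e^{-I(v_0)/2}\,\Pats_\varphi(f_0)\,\Pats_\varphi(f_1)$ as $t\to\infty$.

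Next I would code $\psi$ by a Markov partition: since $\psi$ is a Hölder reparametrization of the geodesic flow on $T^1M$, Remark \ref{obs:weak} supplies a Markov coding $(\E,\pi,r)$; the unique measure of maximal entropy $m^\#$ lifts to that of the suspension $\sigma^r$, and the cocycle $\sigma$ becomes the integration of a Hölder function $F:\E\to\ker\varphi$ along suspension orbits (with Birkhoff periods equal to the periods of $f$). The target asymptotic is then a local central limit theorem for Birkhoff-type sums of the $\ker\varphi$-valued Hölder potential $F$ over the suspension of an irreducible two-sided subshift of finite type, with the macroscopic offset $-\sqrt t\,v_0$. This is precisely the content of Thirion's local limit theorem \cite{thirionmixing}; applying it produces the desired asymptotic and fixes the Euclidean norm $|\cdot|$ on $V$ by requiring that its restriction to $\ker\varphi$ coincide with the asymptotic covariance form of $F$ for $m^\#$. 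The quadratic form $I$ of equation (\ref{eq:I}) then records the squared $|\cdot|$-length of $v_0$ measured orthogonally to the flow direction $u_\varphi$, producing the Gaussian profile $e^{-I(v_0)/2}$.

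The hard part is verifying the non-arithmeticity hypothesis that underlies Thirion's theorem: the perturbed Ruelle transfer operators $\mathcal L_{i\langle\xi,F\rangle-P(-h_\varphi r)r}$ must have spectral radius strictly less than one for every nonzero character $\xi\in(\ker\varphi)^*$. By Liv\v sic's Theorem \ref{teo:livsic3} this is equivalent to showing that the additive subgroup of $\ker\varphi$ generated by the Birkhoff periods of $F$ is dense. These periods are precisely the vectors $\l_c^0(\g)$ computed at the end of the proof of Proposition \ref{prop:topo}, i.e.\ the projections of the full periods $\l_c(\g)$ onto $\ker\varphi$ parallel to $u_\varphi$; meanwhile the flow-direction periods $\varphi(\l_c(\g))$ already generate $\R$ densely, so $\psi$ is topologically mixing. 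The standing hypothesis that $\{\l_c(\g):\g\in\G-\{e\}\}$ generates a dense subgroup of $V=\ker\varphi\oplus\R u_\varphi$ then forces the $\ker\varphi$-projections to be dense in $\ker\varphi$, which closes the argument and licenses the application of Thirion's local limit theorem with non-degenerate covariance.
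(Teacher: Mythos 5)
Your overall strategy is the same as the paper's: transfer the problem via Proposition \ref{prop:topo} to the skew-product $\widehat\psi$ on $T^1M\times\ker\varphi$, pass to a Markov coding of $\psi$, and invoke Thirion's local limit theorem. Your reduction formula for the integral is correct, and you are right that the remaining work is a non-arithmeticity verification. But the verification you give has a genuine gap.

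The paper applies Thirion's theorem to the \emph{$V$-valued} potential $K(x)=r(x)u_\varphi+\int_0^{r(x)}f(\psi_s(\pi x))\,ds$ on the shift $\E$, and the required hypothesis is that the Birkhoff periods of $K$ generate a dense subgroup of $V$, not merely that their $\ker\varphi$-components generate a dense subgroup of $\ker\varphi$. You argue instead that it suffices to show density of the $\ker\varphi$-projections $\l_c^0(\g)$ and, separately, density of the return-times $\varphi(\l_c(\g))$ in $\R$. These two facts together are \emph{strictly weaker} than the joint density in $V$ that the local limit theorem actually needs: a lattice such as $T(\Z^2)$ for a generic linear isomorphism $T$ of $\R^2$ has dense image under both coordinate projections but is itself discrete. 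Your perturbed transfer operator is also stated only for characters $\xi\in(\ker\varphi)^*$, whereas the Guivarc'h--Hardy/Babillot mechanism underlying Thirion's theorem requires control for joint characters in $(\ker\varphi)^*\times\R$.

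The correct verification is short, and uses the computation already carried out at the end of Proposition \ref{prop:topo}: the Birkhoff period of $K$ along the periodic orbit indexed by $[\g]$ is $\varphi(\l_c(\g))\,u_\varphi - \l_c^0(\g) = 2\varphi(\l_c(\g))\,u_\varphi - \l_c(\g)$, i.e.\ the image of $\l_c(\g)$ under the linear involution $\mathcal R(v)=2\varphi(v)u_\varphi-v$ (reflection across $\R u_\varphi$ along $\ker\varphi$). Since $\mathcal R$ is a linear automorphism of $V$, the group generated by the periods of $K$ is $\mathcal R$ of the group generated by $\{\l_c(\g)\}$, hence dense in $V$ by the standing hypothesis of the theorem. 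This, rather than separate density of the two projections, is what licenses the application of Thirion's theorem.
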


The remainder of the section is devoted to the proof of Theorem \ref{teo:melange}.

Applying Proposition \ref{prop:topo}, we get a reparametrization of the geodesic flow $\psi,$ together with a H\"older-continuous map $f:T^1M\to \ker\varphi$ and $\vo E:\G\/(\bord^2\G\times V) \to T^1M\times\ker\varphi$ that conjugates:

\begin{itemize} \item[-] the action of $\ker\varphi$ on $\G\/(\bord^2\G\times V),$ and it's action by translations on the fibers on $T^1M\times\ker\varphi,$ \item[-] the flow $\om^\varphi$ on $\G\/(\bord^2\G\times V)$ with the flow $\widehat\psi=(\widehat\psi_t)_{t\in\R}$ on $T^1M\times\ker\varphi,$ defined by equation (\ref{equation:rompebola}).
\end{itemize}

We will thus study mixing properties of $$t\cdot(x,v) \mapsto (\psi_t(p),v-\int_0^tf(\psi_sp)ds-\sqrt tv_0).$$

Consider a Markov coding $(\E,\pi,r)$ for $\psi$ (Remark \ref{obs:weak}). According to equation (\ref{eq:equlibrio}), there exists an equilibrium state of the shift $\sigma:\E\to\E,$ denoted by $\nu_\varphi,$ corresponding to the measure ${m_\varphi}^\#$ via the Markov coding, i.e. for every measurable function $G:T^1M\to\R$ one has \begin{equation}\label{equation:cuenta} \int_{T^1M} Gd{m_\varphi}^\#=\frac1{\int rd\nu_\varphi}\int_\E\int_0^{r(x)}  G(\psi_s(\pi (x)))ds d\nu_\varphi(x).\end{equation}

Define $K:\E\to V$ by $$K(x)=r(x)u_\varphi +\int_0^{r(x)} f(\psi_s(\pi (x)))ds,$$ and $\hat K:\E\times V\to\E\times V$ by $\hat K(x,v)=(\sigma x, v-K(x)).$

\begin{lema} The map $\sub \pi:\E\times V\to T^1M\times \ker\varphi,$ defined by $$\sub \pi(x,v)= (\psi_{\varphi(v)}(\pi x),v-\varphi(v)u_\varphi-\int_0^{\varphi(v)}f(\psi_s(\pi (x)))ds)$$ $$=\widehat\psi_{\varphi(v)}(\pi x, v-\varphi(v)u_\varphi),$$ is $\hat K$-invariant, and induces a measurable isomorphism, between the measure induced on $(\E\times V)/\hat K$ by $\nu_\varphi\otimes\Leb_V,$ and a positive multiple of the measure ${m_\varphi}^\#\otimes\Leb_{\ker\varphi}$ on $T^1M\times\ker\varphi.$
\end{lema}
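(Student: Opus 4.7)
The plan is to reduce the statement to what is already known about the Markov coding $(\E,\pi,r)$ of $\psi$, by factoring source and target over the map $(x,v)\mapsto(x,\varphi(v))$.

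A first observation is the identity $\varphi(K(x))=r(x)$: since $f$ takes values in $\ker\varphi$ and $\varphi(u_\varphi)=1$,
\[
\varphi(K(x))=r(x)\,\varphi(u_\varphi)+\int_0^{r(x)}\varphi(f(\psi_s(\pi x)))\,ds=r(x).
\]
This identity is what makes $(x,v)\mapsto(x,\varphi(v))$ equivariant from $\hat K$ on $\E\times V$ to $\hat r$ on $\E\times\R$. Combined with the semigroup property $\widehat\psi_{s+t}=\widehat\psi_s\circ\widehat\psi_t$ and the relation $\pi(\sigma x)=\psi_{r(x)}(\pi x)$ (a direct consequence of the $\hat r$-invariance of the Markov projection), a routine computation then verifies the $\hat K$-invariance of $\sub\pi$:
\[
\sub\pi(\hat K(x,v))=\widehat\psi_{\varphi(v)-r(x)}\bigl(\widehat\psi_{r(x)}(\pi x,\,v-\varphi(v)u_\varphi)\bigr)=\widehat\psi_{\varphi(v)}(\pi x,\,v-\varphi(v)u_\varphi)=\sub\pi(x,v).
\]

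For the measure and isomorphism statements, I would use the splitting $V=\R u_\varphi\oplus\ker\varphi$ to decompose $\Leb_V$ as a positive multiple of $\Leb_\R\otimes\Leb_{\ker\varphi}$, and switch to coordinates $(x,t,w)$ with $t=\varphi(v)$ and $w=v-\varphi(v)u_\varphi\in\ker\varphi$. In these coordinates $\sub\pi$ reads
\[
(x,t,w)\longmapsto\Bigl(\psi_t(\pi x),\,w-\int_0^t f(\psi_s(\pi x))\,ds\Bigr),
\]
and $\hat K$ reads $(x,t,w)\mapsto(\sigma x,\,t-r(x),\,w-\int_0^{r(x)}f(\psi_s(\pi x))\,ds)$. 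The first two factors are exactly the Markov coding map $\Pi$, which by the definition of a Markov coding together with equation (\ref{equation:cuenta}) descends to a measurable isomorphism $(\E\times\R)/\hat r\to T^1M$ pushing $\nu_\varphi\otimes\Leb_\R$ forward to a positive multiple of ${m_\varphi}^\#$. On the $\ker\varphi$ fibers, $\hat K$ acts by a translation depending only on $x$, and since Lebesgue measure on $\ker\varphi$ is translation-invariant, the fiber-wise identification preserves $\Leb_{\ker\varphi}$.

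Putting these two factors together yields a measurable isomorphism $(\E\times V)/\hat K\to T^1M\times\ker\varphi$ sending $\nu_\varphi\otimes\Leb_V$ to a positive multiple of ${m_\varphi}^\#\otimes\Leb_{\ker\varphi}$. The subtle point requiring care is that $\Pi$ is only bounded-to-one and only injective on a residual set of full measure; but this is exactly the notion of measurable isomorphism needed here, and since the $\ker\varphi$ direction is straightened by a translation (hence a genuine bijection on each fiber), the almost-everywhere isomorphism at the level of $\Pi$ lifts without difficulty to an almost-everywhere isomorphism at the level of $\sub\pi$.
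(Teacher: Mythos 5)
Your proof is correct and takes essentially the same approach as the paper: the $\hat K$-invariance rests on the identity $\varphi(K(x))=r(x)$ together with the Markov-coding relation $\psi_{r(x)}(\pi x)=\pi(\sigma x)$, and the measure statement is reduced to equation (\ref{equation:cuenta}) and property ii) by factoring through the $\hat r$-equivariant map $(x,v)\mapsto(x,\varphi(v))$. Your use of the semigroup property of $\widehat\psi$ to absorb the integral substitution is a cleaner packaging of the paper's explicit computation, and you spell out the measure-theoretic step the paper leaves implicit.
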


\begin{proof} Let's show that $\sub\pi$ is $\hat K$-invariant, the proof is an explicit computation. Remark that property i) on the definition of Markov coding states that, for every $x\in\E$ and $t\in\R,$ one has $\psi_{t-r(x)}(\pi(\sigma(x)))=\psi_t(\pi(x)).$ Now, $$\sub\pi(\hat K(x,v))=\widehat\psi_{\varphi(v)-r(x)}(\pi(\sigma(x)),v-K(x)-\varphi(v-K(x))u_\varphi).$$ Recall that $K(x)=r(x)u_\varphi+ \int_0^{r(x)}f(\psi_s(\pi x))ds,$ hence $$\sub\pi(\hat K(x,v))=\widehat\psi_{\varphi(v)-r(x)}(\pi(\sigma(x)),v-\int_0^{r(x)}f(\psi_s(\pi(x)))ds-\varphi(v)u_\varphi)=$$ $$ \widehat\psi_{\varphi(v)}(\pi(x), v-\varphi(v)u_\varphi -\int_0^{r(x)}f(\psi_s(\pi x))ds-\int_0^{-r(x)}f(\psi_s(\pi(\sigma(x))))ds.$$ Finally, remark that $$-\int_0^{-r(x)}f(\psi_s(\pi(\sigma(x))))ds=-\int_{r(x)}^0 f(\psi_{s-r(x)}(\pi(\sigma(x))))ds=$$ $$-\int_{r(x)}^0f(\psi_s(\pi(x)))ds=\int_0^{r(x)}f(\psi_s(\pi(\sigma(x))))ds.$$ This proves $\hat K$-invariance. The remaining statements follow from equation (\ref{equation:cuenta}) and property ii) of Markov codings.
\end{proof}

Hence, the flow $\widehat\psi$ is measurably conjugated to the translation flow on $(\E\times V)/\hat K,$ in the direction given by $u_\varphi.$ Remark that, since Proposition \ref{prop:topo} states that $\int fdm^\#_\varphi=0,$ equation (\ref{equation:cuenta}) applied to $G=f$ yields $$\int_{\E}Kd\nu_\varphi=(u_\varphi+\int fdm_\varphi^\#)\int_\E rd\nu_\varphi=u_\varphi\int_\E rd\nu_\varphi.$$ Moreover, this conjugation also conjugates the action of $\ker\varphi$ on $T^1M\times\ker\varphi,$ and on $(\E\times V)/\hat K.$ Theorem \ref{teo:melange} is thus a consequence of Proposition \ref{prop:topo}, and the following theorem due to Thi\-rion \cite{thirionmixing}.

\begin{teo}[Thirion \cite{thirionmixing}] Let $\E$ be a subshift of finite type and $K:\E\to V$ a H\"older-continuous map, 
such that the group generated by its periods is dense in $V.$ Assume there exists $\varphi\in V^*$ such that $\varphi\circ K$ is Liv\v sic-cohomologous to a positive function. Consider an equilibrium state $\nu$ and denote by $$\tau=\int_\E K d\nu\in V.$$ Define $\hat K:\E\times V\to\E\times V$ by $\hat K(x,v)=(\sigma(x),v-K(x)).$ Then there exists an euclidean norm $|\cdot|$ on $V$ such that given two compactly supported continuous functions $f_0,f_1:(\E\times V)/\hat K\to\R,$ and $v_0\in \ker\varphi,$ one has $$(2\pi t)^{(\dim V-1)/2}\int_{(\E\times V)/\hat K}f_0(x,v)f_1(x,v-t\tau-\sqrt t v_0)d(\nu\otimes\Leb_V)$$ converges, as $t\to\infty,$ to $$\ca e^{-\II(v_0)/2}\int_{(\E\times V)/\hat K}f_0d(\nu \otimes\Leb_V) \int_{(\E\times V)/\hat K} f_1 d(\nu \otimes\Leb_V),$$ where $\ca>0$ is a constant and $\II(v_0)=(|v_0|^2|\tau|^2-\<v_0,\tau\>^2)/|\tau|^2.$ 
\end{teo}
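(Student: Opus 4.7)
The plan is to prove a multidimensional local central limit theorem for the Birkhoff sums $S_n K = K + K\circ\sigma + \cdots + K\circ\sigma^{n-1}$ against the equilibrium state $\nu,$ and then translate this into the mixing statement via Fourier analysis on the suspension $(\E\times V)/\hat K.$ The key tool is the family of twisted transfer operators. Let $\phi$ be the potential on $\E$ whose equilibrium state is $\nu,$ normalized so that the Ruelle operator $\cal L_0$ has leading eigenvalue $1.$ For $\t\in V^{*}$ define
\[
\cal L_\t g(x) \;=\; \sum_{\sigma y = x} e^{\phi(y) + \ii\,\t(K(y))} g(y)
\]
on a suitable space of H\"older functions on $\E.$

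First I would invoke the standard Ruelle--Perron--Frobenius theorem: $\cal L_0$ has a simple leading eigenvalue $1$ with a spectral gap. By Kato's analytic perturbation theory, for $\t$ in a neighborhood $U$ of $0$ the operator $\cal L_\t$ has a simple dominant eigenvalue $\lambda(\t)$ depending analytically on $\t,$ with Taylor expansion
\[
\lambda(\t) \;=\; 1 + \ii\,\t(\tau) \;-\; \tfrac{1}{2} Q(\t) \;+\; O(|\t|^{3}),
\]
where $Q$ is the asymptotic covariance quadratic form on $V^{*}$ associated to the CLT for $K.$ The hypothesis that $\varphi\circ K$ is Liv\v sic-cohomologous to a positive function together with the density of the periods of $K$ in $V$ ensures that $Q$ is positive definite on $V^{*}$ (equivalently, $\tau\neq 0$ and the non-drift part of $K$ genuinely fluctuates in every direction of $\ker\varphi$).

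Second, I would establish the non-arithmeticity estimate: for every compact $C\subset V^{*}\setminus\{0\}$ there exist $\eps>0$ and $C_0$ with $\|\cal L_\t^n\|\le C_0 e^{-\eps n}$ uniformly for $\t\in C.$ This is the multidimensional Dolgopyat-type bound, and the density of the group generated by the periods of $K$ in $V$ is precisely what rules out any resonance. This is the main obstacle of the proof; it requires quantitative oscillatory cancellation estimates, and is exactly the content of Thirion's technical work.

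Third, having these spectral ingredients, I would lift the integrand to $\E\times V$ using a fundamental domain for $\hat K$ and apply Fourier transform in the $V$ variable. Writing $\widehat{f_i}(x,\t)=\int_V f_i(x,v)e^{-\ii\t(v)}dv,$ the integral in the statement unfolds (by Plancherel and the cocycle decomposition $v-t\tau-\sqrt{t}v_0\mapsto v - S_n K(x)$ along orbits) to an expression of the form
\[
\sum_{n\ge 0} \int_{V^{*}} \Big(\!\int_\E \widehat{f_0}(x,\t)\,\overline{\widehat{f_1}(x,\t)}\,\cal L_\t^n \1(x)\,d\nu(x)\!\Big)\, e^{\ii\t(t\tau + \sqrt{t}v_0)}\,d\t.
\]
The non-arithmeticity estimate discards the contribution of $\t$ outside a small neighborhood $U$ of $0$ as $O(e^{-\eps n}),$ hence negligible after summation.

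Fourth, on $U$ I would substitute $\t = s/\sqrt{t}$ (Jacobian $t^{-\dim V/2}$) and choose $n = n(t)$ so that $n \int r\, d\nu \sim t.$ Using $\log\lambda(\t) = \ii\t(\tau) - \tfrac12 Q(\t) + O(|\t|^{3}),$ the phase $e^{\ii\t(t\tau + \sqrt{t}v_0)} \lambda(\t)^{n}$ becomes
\[
\exp\!\bigl(\ii\, s(v_0) - \tfrac{1}{2} Q(s) + o(1)\bigr),
\]
the $\sqrt{t}\tau$-term being cancelled by the imaginary part of $n\log\lambda(s/\sqrt{t}).$ The leading eigenprojector converges to $\1\otimes\nu,$ so the $x$-integral factorizes into $\Pats_\varphi(f_0)\Pats_\varphi(f_1)$ in the limit. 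What remains is the Gaussian integral
\[
\int_{V^{*}} e^{-Q(s)/2 + \ii s(v_0)}\,ds,
\]
which must be evaluated carefully: the $u_\varphi$-direction in $V$ is frozen by the drift constraint so only the $(\dim V - 1)$-dimensional transverse Gaussian survives, producing the normalization $(2\pi t)^{(\dim V - 1)/2}$ and the factor $\ca\, e^{-\II(v_0)/2}$ once $|\cdot|$ is chosen to be the Euclidean norm dual to $Q$ restricted to the hyperplane orthogonal to $\tau.$ Putting these four pieces together yields the stated convergence.
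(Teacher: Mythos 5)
Your proposal follows the same route as the paper's sketch: the paper simply sets up a normalized Ruelle operator, notes that the associated semi-Markovian chain on $\E\times V$ is the right object, and defers to verifying the hypotheses of Babillot's Theorem 2.9 (following Guivarc'h--Hardy), which is exactly the twisted-transfer-operator plus spectral-perturbation machinery you lay out. Your four steps are a faithful unpacking of what Babillot's result requires. Two minor points worth flagging: the roof function $r$ you invoke in step four does not exist in the statement being proved (it is a leftover from the surrounding lemma's Markov coding; here the natural choice is $\varphi\circ K$, which is cohomologous to a positive function by hypothesis, and $n\sim t$ is the correct relation); and calling the non-arithmeticity estimate a ``Dolgopyat-type bound'' overstates what is needed --- in this discrete-time, compact-$\t$-domain setting one only needs $r(\cal L_\t)<1$ for $\t\neq 0$ on the H\"older space, which follows from the density of the group of periods by a classical aperiodicity argument (Aaronson--Denker, Guivarc'h--Hardy), not from the quantitative oscillatory cancellation estimates of Dolgopyat. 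With those corrections your outline matches the method the paper points to.
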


\begin{proof}Let us give some hints on the proof for completeness, the basic method is that of Guivarc'h-Hardy \cite{guivarchhardy}. Consider a H\"older-continuous function $g:\E_A\to\R,$ and the associated Ruelle operator, defined by $$L_g(f)(x)=\sum_{y\in\E:\sigma(y)=x}e^{-g(y)}f(y),$$ where $f:\E\to\R$ is H\"older-continuous. We can assume that $g$ is normalized, such that the equilibrium state $\nu,$ is the unique probability measure on $\E$ such that $L_g^*\nu=\nu.$ One then considers the semi-Markovian chain on $\E\times V,$ defined by $$\P_{(x,v)}=\sum_{y\in\E:\sigma(y)=x}e^{-g(y)}\delta_{(y,v+K(y))}.$$ The proof then consists on explicitly verifying the hypothesis of Babillot \cite[Theorem 2.9]{babillot}, see Thirion \cite{thirionmixing} for details.
\end{proof}

\section[The Weyl chamber flow]{Convex representations and the Weyl chamber flow}\label{section:convex}

We are now interested in studying representations $\G\to G,$ of the fundamental group $\G$ of a closed connected negatively curved Riemannian manifold, admitting equivariant maps from $\bord\G$ to some flag space of a noncompact real algebraic semisimple group $G.$ 

Let $K$ be a maximal compact subgroup of $G,$ and consider $\tau,$ the Cartan involution on $\frak g=\lie(G)$ whose fixed point set is the Lie algebra of $K.$ Consider $\frak p=\{v\in\frak g: \tau v=-v\}$ and $\frak a$ a maximal abelian subspace contained in $\frak p.$

Let $\E$ be the set of roots of $\frak a$ on $\frak g.$ Consider a closed Weyl chamber $\frak a^+,$ $\E^+$ the set of positive roots associated to $\frak a^+,$ and $\Pi$ the set of simple roots determined by $\E^+.$ Let $W$ be the Weyl group of $\E,$ and denote by $u_0:\frak a\to \frak a$ the longest element in $W,$ which is the unique element in $W$ that sends $\frak a^+$ to $-\frak a^+.$  The \emph{opposition involution} $\ii:\frak a\to\frak a$ is defined by $\ii=-u_0.$

To each subset $\t$ of $\Pi,$ one associates two opposite parabolic subgroups, $P_\t$ and $\wk{P_\t},$ of $G,$ whose Lie algebras are, by definition, $$\frak p_\t=\frak a \oplus\bigoplus_{\a\in\E^+}\frak g_\a\oplus \bigoplus_{\a\in \<\Pi-\t\>}\frak g_{-\a},$$ and $$\wk{\frak p_\t}=\frak a \oplus\bigoplus_{\a\in\E^+}\frak g_{-\a}\oplus \bigoplus_{\a\in \<\Pi-\t\>}\frak g_{\a},$$ where $\<\t\>$ is the set of positive roots generated by $\t,$ and $$\frak g_\a=\{w\in\frak g:[v,w]=\a(v)w\ \forall v\in\frak a\}.$$ Every pair of opposite parabolic subgroup of $G$ is conjugated to $(P_\t,\wk{P_\t}),$ for a unique $\t,$ and every opposite parabolic subgroup of $P_\t,$ is conjugated to $P_{\ii\t}:$ the parabolic group associated to $$\ii\t=\{\a\circ\ii:\a\in\t\}.$$

Fix from now on a subset of simple roots $\t\subset\Pi,$ and denote by $\scr F_\t=G/P_\t.$ The space $\scr F_{\ii\t}\times\scr F_\t$ has a unique open $G$-orbit, denoted by $\posgen_\t.$

\begin{defi} A representation $\rho:\G\to G$ is $\t$-\emph{convex}, if it admits two H\"older-continuous $\rho$-equivariant maps, $\xi:\bord\G\to\scr F_\t$ and $\eta:\bord\G\to\scr F_{\ii\t},$ such that whenever $x\neq y$ in $\bord\G,$ the pair $(\eta(x),\xi(y))$ belongs to $\posgen_\t.$
\end{defi}

The space $\scr F_\Pi=\scr F$ is the Furstenberg boundary of the symmetric space of $G,$ hence, a $\Pi$-convex representation is called \emph{hyperconvex}.

We recall some definitions from Benoist \cite{limite}. An element $g\in G$ is \emph{proximal on} $\scr F_\t,$ if it has an attracting fixed point on $\scr F_\t.$ This attractor is unique and is denoted by $g^\t_+.$ The element $g$ also has a fixed point $g_-^\t$ on $\scr F_{\ii\t},$ which is the attractor for $g^{-1}$ on $\scr F_{\ii\t}.$ For every $x\in\scr F_\t$ such that $(g_-^\t,x)\in\posgen_\t,$ one has $g^nx\to g_+^\t.$ The point $g_-^\t$ is called the repelling hyperplane of $g.$

\begin{lema}[{\cite[Section 3]{exponential}}]\label{lema:lox} Let $\rho:\G\to G$ be a Zariski-dense $\t$-convex representation. Then for every $\g\in\G-\{e\},$ $\rho(\g)$ is proximal on $\scr F_\t,$ $\xi(\g_+)$ is its attracting fixed point and $\eta(\g_-)$ is the repelling hyperplane.
\end{lema}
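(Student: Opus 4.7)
The plan is to verify two facts from which the lemma follows by standard dynamical arguments: (i) $\xi(\g_+) \in \scr F_\t$ and $\eta(\g_-) \in \scr F_{\ii\t}$ are transverse fixed points of $\rho(\g)$, and (ii) the Cartan projection of $\rho(\g^n)$ is $\t$-regular, meaning $\min_{\a \in \t} \a(a(\rho(\g^n))) \to \infty$ as $n \to \infty$. Combined with Benoist's standard contraction principle on $\scr F_\t$, these give proximality with the stated attractor and repelling hyperplane.

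Step (i) is immediate from equivariance: since $\g \cdot \g_+ = \g_+$ in $\bord\G$, we have $\rho(\g) \xi(\g_+) = \xi(\g \cdot \g_+) = \xi(\g_+)$, and analogously $\rho(\g) \eta(\g_-) = \eta(\g_-)$. Transversality $(\eta(\g_-), \xi(\g_+)) \in \posgen_\t$ is exactly the content of the $\t$-convex hypothesis applied to the distinct points $\g_+ \neq \g_-$ of $\bord\G$.

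Step (ii) is the main work. The strategy is to exploit the Busemann cocycle: for each simple root $\a \in \t$, the composition of $\a \circ \bus$ with $\xi$ produces a H\"older cocycle $c_\a : \G \times \bord\G \to \R$ whose period on $\g \in \G - \{e\}$ equals $\a(\lambda(\rho(\g)))$, where $\lambda$ denotes the Jordan projection. The transversality $(\eta(x), \xi(y)) \in \posgen_\t$ for $x \neq y$, together with Zariski-density, ensures that $c_\a$ has nonnegative periods and finite exponential growth rate; Ledrappier's Theorem \ref{teo:ledrappier} together with Lemma \ref{lema:funcionpositiva} then forces $F_{c_\a}$ to be Liv\v sic-cohomologous to a strictly positive function, so $\a(\lambda(\rho(\g))) > 0$ for every $\g \in \G - \{e\}$ and every $\a \in \t$. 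Since $a(\rho(\g^n)) = n\lambda(\rho(\g)) + O(1)$, this establishes $\t$-regularity of the sequence $\rho(\g^n)$.

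With (i) and (ii) in hand, write $\rho(\g^n) = k_n \exp(a(\rho(\g^n))) l_n$ in Cartan coordinates. Up to a subsequence $k_n[P_\t] \to x_\infty \in \scr F_\t$ and $l_n^{-1}[\wk{P_{\ii\t}}] \to y_\infty \in \scr F_{\ii\t}$; by the contraction principle, the $\t$-regularity of $a(\rho(\g^n))$ yields $\rho(\g^n) z \to x_\infty$ for every $z$ transverse to $y_\infty$. Choosing $z = \xi(x)$ for any $x \in \bord\G - \{\g_-\}$, the equivariance of $\xi$ and $\g^n x \to \g_+$ force $\rho(\g^n)\xi(x) = \xi(\g^n x) \to \xi(\g_+)$, hence $x_\infty = \xi(\g_+)$ independently of the extracted subsequence; the analogous computation with $\eta$ and $\g^{-n}$ gives $y_\infty = \eta(\g_-)$. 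This is exactly proximality of $\rho(\g)$ on $\scr F_\t$ with attractor $\xi(\g_+)$ and repelling hyperplane $\eta(\g_-)$. The hard part of the argument is step (ii): converting the purely topological transversality of the pair $(\xi, \eta)$ into quantitative divergence of the Cartan projection in each $\a \in \t$, which is precisely where the H\"older cocycle machinery of Section \ref{section:cocyclosreales} and the Anosov flow on $T^1 M$ enter decisively.
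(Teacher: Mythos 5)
Your Step (i) is correct, and Step (iii) is essentially fine once proximality is available, but Step (ii) is circular. The claim that the period of $c_\a$ on $\g$ equals $\a(\lambda(\rho(\g)))$ is exactly the content of Lemma~\ref{lema:alpedo}, whose proof \emph{depends on} Lemma~\ref{lema:lox}: one needs to already know that $\xi(\g_+)$ is the \emph{attracting} fixed point of $\rho(\g)$ on $\scr F_\t$ in order to identify $\bus_\t(\rho(\g),\xi(\g_+))$ with $\lambda_\t(\rho(\g))$. By equivariance alone you only know $\xi(\g_+)$ is \emph{some} fixed point of $\rho(\g)$, and for a non-attracting fixed point $x$ the value $\a(\bus_\t(g,x))$ is a different Weyl-group translate of the Jordan projection and can be negative (already in $\PSL(2,\R)$, at the repelling fixed point, $\bus(g,x)=-\log\lambda_1(g)<0$). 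Thus neither the nonnegativity of the periods nor the period formula is available a priori, and Lemma~\ref{lema:funcionpositiva} cannot be invoked. The same circularity arises with the finiteness of the exponential growth rate: in the paper that is Lemma~\ref{lema:finite}, and its proof also routes through Lemma~\ref{lema:alpedo}, hence through Lemma~\ref{lema:lox}. In short, the thermodynamic machinery in Section~\ref{section:cocyclosreales} is \emph{downstream} of this lemma, not upstream of it.

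The proof in the cited reference must therefore be more elementary. A correct route is: reduce to the projective case via the Tits representations $\L_\a$, $\a\in\t$, from Proposition~\ref{prop:titss}, so that one has $\rho$-equivariant transverse maps $\xi_\a\circ\xi:\bord\G\to\P(V_\a)$ and $\eta_\a\circ\eta:\bord\G\to\P(V_\a^*)$. By equivariance and transversality, $\L_\a(\rho(\g))$ preserves the splitting $V_\a=\ell\oplus H$ where $\ell=\xi_\a(\xi(\g_+))$ and $H=\ker\eta_\a(\eta(\g_-))$; let $\lambda$ be the eigenvalue on $\ell$. For any $x\in\bord\G-\{\g_-\}$, the north--south dynamics on $\bord\G$ and equivariance give $\L_\a(\rho(\g))^n\,\xi_\a(\xi(x))\to\ell$ in $\P(V_\a)$. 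Writing a lift of $\xi_\a(\xi(x))$ as $c\,v_\ell+w$ with $w\in H$ (and $c\neq 0$ by transversality), projective convergence to $\ell$ forces $\|\L_\a(\rho(\g))^n w\|/|\lambda|^n\to 0$. Zariski-density of $\rho(\G)$ makes $\L_\a\circ\rho$ irreducible, so $\xi_\a(\xi(\bord\G))$ spans $V_\a$ and the vectors $w$ obtained this way span $H$; hence the spectral radius of $\L_\a(\rho(\g))|_H$ is strictly less than $|\lambda|$, i.e.\ $\L_\a(\rho(\g))$ is proximal with attracting line $\ell$ and repelling hyperplane $H$. Doing this for every $\a\in\t$ gives proximality on $\scr F_\t$ with the stated attractor and repelling hyperplane. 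This argument uses only equivariance, transversality, north--south dynamics on $\bord\G$, and irreducibility — none of the reparametrization or equilibrium-state machinery.
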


The equivariant functions $\xi$ and $\eta$ of the definition are then unique, since attracting points $\g_+$ are dense in $\bord\G.$






\subsubsection*{Busemann cocycle of $\rho$}

To a $\t$-convex representation $\rho:\G\to G,$ one associates a H\"older cocycle on $\bord\G.$ In order to do so, we need \emph{Busemann's cocycle} of $G,$ introduced by Quint \cite{quint1}. 

The set $\scr F$ is $K$-homogeneous, denote by $M$ the stabilizer of $[P]$ in $K.$ One defines $\bus_\Pi:G\times\scr F\to\frak a$ to verify the following equation $$gk=l\exp(\bus_\Pi(g,kM))n,$$ for every $g\in G$ and $k\in K,$ using Iwasawa's decomposition of $G=K\exp(\frak a)N,$ where $N$ is the unipotent radical of $P.$

In order to obtain a cocycle only depending on the set $\scr F_\t$ (and $G$), one considers $$\frak a_\t=\bigcap_{\a\in\Pi-\t}\ker\a,$$ the Lie algebra of the center of the reductive group $P_\t\cap \vo{P_{\t}},$ where $\vo{P_\t}$ is an opposite parabolic group of $P_\t.$ Consider also $p_\t:\frak a\to\frak a_\t,$ the only projection invariant under the group $W_\t=\{w\in W:w(\frak a_\t)=\frak a_\t\}.$ 

\begin{obs}\label{obs:formula}One easily verifies the following relation: $p_{\ii\t}=\ii\circ p_\t\circ\ii.$
\end{obs}

Quint \cite{quint1} proves the following lemma.

\begin{lema}[{Quint \cite[Lemmas 6.1 and 6.2]{quint1}}] The map $p_\t\circ\bus_\Pi$ factors trough a map $\sigma_\t:G\times\scr F_\t\to \frak a_\t.$ The map $\bus_\t$ verifies the cocycle relation: for every $g,h\in G$ and $x\in\scr F_\t$ one has $$\bus_\t(gh,x)=\bus_\t(g,hx)+\bus_\t(h,x).$$
\end{lema}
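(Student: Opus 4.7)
The plan is to verify the two assertions separately: first that $p_\t\circ\bus_\Pi$ descends to a well-defined map $\bus_\t:G\times\scr F_\t\to\frak a_\t,$ and then that this $\bus_\t$ inherits the cocycle identity from $\bus_\Pi.$

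For the factoring, I would use Tits representations. Pick $\omega\in\frak a^*$ that is dominant integral (for some finite cover of $G,$ if necessary) and \emph{supported on $\t$}, meaning $\omega(H_\alpha)=0$ for every $\alpha\in\Pi-\t;$ equivalently $\omega$ vanishes on $\ker p_\t,$ so $\omega$ lies in $\frak a_\t^*.$ Let $\phi_\omega:G\to\GL(V_\omega)$ be the irreducible representation of highest weight $\omega,$ fix a unit highest weight vector $v_\omega,$ and a $K$-invariant Euclidean norm on $V_\omega.$ A standard computation shows that the $G$-stabilizer of the line $[v_\omega]$ is exactly $P_\t,$ whose orbit is $\scr F_\t.$ For $y=kM\in\scr F$ with $k\in K,$ writing the Iwasawa decomposition $gk=l\exp(\bus_\Pi(g,y))n,$ using that $n$ fixes $v_\omega,$ that $\exp(H)$ acts on $v_\omega$ by $e^{\omega(H)},$ and that $l\in K$ is unitary, gives
$$\|\phi_\omega(g)\phi_\omega(k)v_\omega\|=\|\phi_\omega(gk)v_\omega\|=e^{\omega(\bus_\Pi(g,y))}.$$
If $y'=k'M$ has the same image in $\scr F_\t,$ then $k^{-1}k'\in K\cap P_\t$ fixes $[v_\omega]$ by a unitary scalar, so $\phi_\omega(k)v_\omega$ and $\phi_\omega(k')v_\omega$ differ by a unit scalar. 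The left-hand side is thus unchanged, so $\omega(\bus_\Pi(g,y))$ depends only on $\pi(y)\in\scr F_\t.$ Such $\omega$ range over a lattice spanning $\frak a_\t^*,$ and any $\omega\in\frak a_\t^*$ satisfies $\omega=\omega\circ p_\t;$ hence $p_\t(\bus_\Pi(g,y))$ is determined by its pairings against such $\omega$ and so depends only on $\pi(y).$ Set $\bus_\t(g,\pi(y))$ equal to this common value.

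Once the factoring is established, the cocycle property of $\bus_\t$ is formal. The cocycle relation for $\bus_\Pi$ follows directly from the uniqueness of the Iwasawa decomposition (write $hk=l_h\exp(\bus_\Pi(h,kM))n_h,$ apply Iwasawa to $gl_h,$ and use that $A$ normalizes $N$). Lifting $y\in\scr F_\t$ arbitrarily to $\tilde y\in\scr F,$ the point $h\tilde y$ is a lift of $h\cdot y,$ and applying the linear map $p_\t$ to
$$\bus_\Pi(gh,\tilde y)=\bus_\Pi(g,h\tilde y)+\bus_\Pi(h,\tilde y)$$
yields the desired identity $\bus_\t(gh,y)=\bus_\t(g,h\cdot y)+\bus_\t(h,y).$

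The only substantive ingredient is the existence of enough dominant integral weights supported on $\t$ to span $\frak a_\t^*;$ this is standard Borel--Tits theory for real algebraic semisimple groups, possibly after replacing $G$ by a finite cover, which does not affect the Iwasawa calculation. Modulo that input, the entire argument is a direct manipulation of the Iwasawa decomposition together with the defining property of $p_\t$ as the $W_\t$-invariant projection onto $\frak a_\t.$
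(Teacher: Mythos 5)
The paper does not reproduce a proof of this statement — it simply cites Quint's Lemmas 6.1 and 6.2 in \cite{quint1} — so there is no in-text argument to compare against; I can only assess your argument on its own terms. It is correct, and it is essentially the standard proof (the same underlying idea Quint uses). For the factoring, pairing $\bus_\Pi$ against highest-weight vectors of irreducible representations with weight supported on $\t$ is exactly the device used elsewhere in this paper (Proposition \ref{prop:titss} and Lemma \ref{lema:busemanna}), and since the fundamental weights $\omega_\alpha$, $\alpha\in\t$, restrict to a basis of $\frak a_\t^*$, your spanning argument is sound. Your derivation of the cocycle identity for $\bus_\Pi$ from uniqueness of Iwasawa (via $A$ normalizing $N$) and then pushing forward by the linear map $p_\t$ is also correct.

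Two small remarks. First, your claim that the $G$-stabilizer of $[v_\omega]$ is \emph{exactly} $P_\t$ holds only for $\omega$ that is regular with respect to $\t$, i.e., $\omega(H_\alpha)>0$ for all $\alpha\in\t$; for a general dominant weight supported on $\t$ the stabilizer may be strictly larger — but your argument only uses the inclusion $P_\t\subset\mathrm{Stab}_G([v_\omega])$, so nothing breaks. Second, you could bypass passing to a finite cover by invoking the Tits representations $\Lambda_\alpha:G\to\PGL(V_\alpha)$ of Proposition \ref{prop:titss}, whose highest weights $\chi_\alpha$ are integer multiples of $\omega_\alpha$ and which already live on $G$; since only the ratio $\|\Lambda_\alpha(g)v\|/\|v\|$ enters (as in Lemma \ref{lema:busemanna}), the $\PGL$ target is harmless. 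Either route is fine, and your version is self-contained.
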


The cocycle associated to a $\t$-convex representation $\vect_\t^\rho=\vect_\t:\G\times\bord\G\to\frak a_\t,$ is defined by $$\vect_\t(\g,x)= \bus_\t(\rho(\g), \xi(x)).$$


Denote by $\lambda:G\to\frak a^+$ the \emph{Jordan projection}, and define $\lambda_\t:G\to\frak a_\t$ by $\lambda_\t(g)=p_\t(\lambda (g)).$

\begin{lema}\label{lema:alpedo}Let $\rho:\G\to G$ be a Zariski-dense $\t$-convex representation. Then the period of $\vect_\t,$ for $\g\in\G-\{e\},$ is $$\vect_\t(\g,\g_+)=\lambda_\t(\rho(\g)).$$
\end{lema}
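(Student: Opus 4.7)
The plan is to first unfold the definition and then reduce to a general identity for the Busemann cocycle. By definition, $\vect_\t(\g,\g_+)=\bus_\t(\rho(\g),\xi(\g_+))$, so in view of Lemma \ref{lema:lox}---which identifies $\xi(\g_+)$ as the attracting fixed point $\rho(\g)^\t_+$ of $\rho(\g)$ on $\scr F_\t$---it suffices to establish the following general claim: for every $g\in G$ that is proximal on $\scr F_\t$, one has $\bus_\t(g,g_+^\t)=\lambda_\t(g)$.

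The first step of the proof of the general claim is to show that both sides are invariant under conjugation of $g$. The Jordan projection $\lambda_\t$ is conjugation invariant by definition. For the left side, the cocycle property of $\bus_\t$ applied at the fixed point $g_+^\t$, combined with the identity $\bus_\t(h^{-1},hx)+\bus_\t(h,x)=0$ (which follows from $\bus_\t(e,\cdot)=0$), yields $\bus_\t(hgh^{-1},h\cdot g_+^\t)=\bus_\t(g,g_+^\t)$. Since $(hgh^{-1})_+^\t=h\cdot g_+^\t$, we are free to replace $g$ by any convenient conjugate.

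The second step is to put $g$ in Iwasawa form. I would use the real Jordan decomposition $g=g_e g_h g_u$ into commuting elliptic, $\R$-hyperbolic, and unipotent parts, conjugate so that $g_h=\exp(\lambda(g))$, and then further conjugate inside the centralizer $L_\t=P_\t\cap\wk{P_\t}$ of $\exp(\lambda(g))$ (using its own Iwasawa decomposition $L_\t=(K\cap L_\t)\exp(\frak a)(N\cap L_\t)$) to bring $g_e$ into $K\cap L_\t$ and $g_u$ into $N\cap L_\t$. This produces a representative of the form $g=l\exp(\lambda(g))n$ with $l\in K$ and $n\in N$, which is an Iwasawa decomposition of $g$ applied to the basepoint $k=e$. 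Hence $\bus_\Pi(g,[P])=\lambda(g)$ directly from the defining equation of the Busemann cocycle. Since the canonical projection $\scr F\to\scr F_\t$ sends $[P]$ to $[P_\t]$, and $[P_\t]$ is precisely $g_+^\t$ (because $g\in P_\t$ while $\a(\lambda(g))>0$ for $\a\in\t$), applying $p_\t$ and using $\bus_\t=p_\t\circ\bus_\Pi$ gives $\bus_\t(g,g_+^\t)=p_\t(\lambda(g))=\lambda_\t(g)$, as desired.

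The main obstacle is the reduction in the second step to this canonical Iwasawa form: one must verify that, after Jordan decomposition and a suitable further conjugation inside $L_\t$, the compact factor can be absorbed into $K$ and the unipotent factor into $N$, rather than remaining in larger compact or unipotent subgroups of the Levi. This relies on the compatibility between the global Iwasawa decomposition of $G$ and that of $L_\t$, together with the proximality hypothesis $\a(\lambda(g))>0$ for $\a\in\t$, which guarantees that the attracting flag in $\scr F_\t$ really is the base point $[P_\t]$ and not some point on a wall.
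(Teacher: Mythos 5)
Your reduction via Lemma~\ref{lema:lox} to the claim $\bus_\t(g,g_+^\t)=\lambda_\t(g)$ for a $\t$-proximal $g$, and your verification that both sides are invariant under conjugation of $g$, are correct and match the shape of the argument. But the route you then take differs from the paper's, which delegates the rest to \cite[Lemma~7.5]{quantitative}: there the identity is obtained coordinatewise through Tits's representations $\L_\a$ and Quint's Lemma~\ref{lema:busemanna}. One conjugates so that $g_+^\t=[P_\t]$, hence $g\in P_\t$; for $\a\in\t$ the highest-weight line $\xi_\a([P_\t])$ is preserved by $\L_\a(g)$ and, being the attracting line of the proximal transformation $\L_\a(g)$, carries an eigenvalue of modulus $e^{\chi_\a(\lambda(g))}$; Lemma~\ref{lema:busemanna} then gives $\chi_\a(\bus_\t(g,[P_\t]))=\chi_\a(\lambda(g))$ for each $\a\in\t$, and since $\{\chi_\a|_{\frak a_\t}\}_{\a\in\t}$ is a basis of $\frak a_\t^*$, the vector identity follows. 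This is a scalar eigenvalue computation and avoids Jordan and Iwasawa decompositions entirely.

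The step you yourself flag as the ``main obstacle'' is a genuine gap as written. After conjugating so that $g_h=\exp(\lambda(g))$ and then $g_e\in K\cap L$ (where $L=Z_G(\exp\lambda(g))$ --- note this is $L_{\t_0}$ with $\t_0=\{\a\in\Pi:\a(\lambda(g))>0\}$, a priori a proper subgroup of $L_\t$, not $L_\t$ itself), you still need to move $g_u$ into $N\cap L$ by a conjugation \emph{inside} $Z_L(g_e)$, so as not to disturb $g_e$ or $g_h$. For that you need $N\cap Z_L(g_e)$ to be a maximal unipotent subgroup of $Z_L(g_e)$, equivalently that $P\cap Z_L(g_e)$ be a minimal parabolic of $Z_L(g_e)$; this does not follow merely from $Z_L(g_e)$ being $\theta$-stable. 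Without it, the Iwasawa $\frak a$-component of $g_e\exp(\lambda(g))g_u$ need not equal $\lambda(g)$: a lower-triangular unipotent $\left(\begin{smallmatrix}1&0\\1&1\end{smallmatrix}\right)$ inside an $\SL_2$-block of $L$ already has a nontrivial $\frak a$-component in its Iwasawa decomposition. If you wish to keep your strategy, the cleanest repair is to stop after observing that, once $g_h=\exp(\lambda(g))$, one has $g\in L\subset P_\t$ and the domination of the $\exp(n\lambda(g))$ part forces $g_+^\t=[P_\t]$; from there run the eigenvalue computation above instead of trying to normalize the full Iwasawa form.
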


\begin{proof} The proof follows from Lemma \ref{lema:lox}. See \cite[Lemma 7.5]{quantitative} for details.
\end{proof}

Remark that a $\t$-convex representation is also (by definition), $\ii\t$-convex. Define then $\vo{\vect_\t}:\G\times\bord\G\to\frak a_\t$ by $\vo{\vect_\t}=\ii\vect_{\ii\t}.$ One has the following.

\begin{lema} The pair $\{\vect_\t,\vo{\vect_\t}\}$ is a pair of dual cocycles.
\end{lema}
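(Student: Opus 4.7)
The plan is to verify two things: first that $\vo{\vect_\t}=\ii\vect_{\ii\t}$ is itself a H\"older cocycle on $\bord\G$ with values in $\frak a_\t$, and second that its periods match those of $\vect_\t$ on inverses, i.e. $\l_{\vo{\vect_\t}}(\g)=\l_{\vect_\t}(\g^{-1})$ for every $\g\in\G-\{e\}$. The first point is essentially free: since $\rho:\G\to G$ is $\t$-convex it is automatically $\ii\t$-convex (with equivariant maps $\eta$ and $\xi$ swapping roles), so $\vect_{\ii\t}:\G\times\bord\G\to\frak a_{\ii\t}$ is a H\"older cocycle by the preceding construction, and $\ii:\frak a_{\ii\t}\to\frak a_\t$ is linear, so $\vo{\vect_\t}$ is a H\"older cocycle valued in $\frak a_\t$.

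For the duality of periods, the key is to apply Lemma \ref{lema:alpedo} to the representation viewed as $\ii\t$-convex: that gives
\[
\l_{\vo{\vect_\t}}(\g) \;=\; \ii\bigl(\vect_{\ii\t}(\g,\g_+)\bigr) \;=\; \ii\bigl(\lambda_{\ii\t}(\rho(\g))\bigr) \;=\; \ii\circ p_{\ii\t}\circ\lambda(\rho(\g)).
\]
Then I invoke Remark \ref{obs:formula}, which says $p_{\ii\t}=\ii\circ p_\t\circ\ii$, to rewrite
\[
\ii\circ p_{\ii\t}\circ\lambda(\rho(\g)) \;=\; p_\t\bigl(\ii\lambda(\rho(\g))\bigr).
\]

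It remains to identify $\ii\lambda(\rho(\g))$ with $\lambda(\rho(\g^{-1}))$. This is the standard identity $\lambda(g^{-1})=\ii(\lambda(g))$ for the Jordan projection, the analogue for $\lambda$ of the equality $\ii(a(g))=a(g^{-1})$ already recorded in the paper for the Cartan projection; it holds because $g^{-1}$ has Jordan data conjugate via the longest Weyl element. Plugging this in gives
\[
\l_{\vo{\vect_\t}}(\g) \;=\; p_\t\bigl(\lambda(\rho(\g^{-1}))\bigr) \;=\; \lambda_\t(\rho(\g^{-1})) \;=\; \l_{\vect_\t}(\g^{-1}),
\]
where the last equality is Lemma \ref{lema:alpedo} applied to $\g^{-1}$. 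This is exactly the duality relation, so $\{\vect_\t,\vo{\vect_\t}\}$ is a pair of dual cocycles. The only nontrivial step is the bookkeeping with $\ii$, $p_\t$ and $p_{\ii\t}$, which is handled cleanly by Remark \ref{obs:formula}; there is no serious obstacle.
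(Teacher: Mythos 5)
Your proof is correct and is exactly the argument the paper has in mind: the paper's own proof is a one-line citation of Remark \ref{obs:formula}, Lemma \ref{lema:alpedo}, and the identity $\ii(\lambda(g))=\lambda(g^{-1})$, and your computation simply carries out that composition in detail. The bookkeeping with $p_\t$, $p_{\ii\t}$, and $\ii$ is handled precisely as intended.
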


\begin{proof} The proof follows from Remark \ref{obs:formula}, together with Lemma \ref{lema:alpedo}, and the fact that $\ii(\lambda (g))=\lambda(g^{-1}),$ for every $g\in G.$
\end{proof}

Consider $\cone_{\vect^\t},$ the closed cone associated to $\vect_\t.$ Since $\cone_{\vect^\t}$ is contained in $p_\t(\frak a^+),$ it does not contain any line, and thus the dual cone $\cone_{\vect^\t}^*$ has non empty interior.


\begin{lema}\label{lema:finite} Let $\rho:\G\to G$ be a Zariski-dense $\t$-convex representation, and consider  $\varphi$ in the interior of the dual cone $\cone_{\vect^\t}^*,$ then the cocycle $\varphi\circ\vect_\t:\G\times\bord\G\to\R$ has finite exponential growth rate.
\end{lema}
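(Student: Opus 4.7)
The plan is to invoke the formal lemma stated right after Corollary \ref{cor:propia}, which reduces the task to exhibiting a single $\varphi_0\in\cone_{\vect^\t}^*$ with $h_{\varphi_0}<\infty$: once this is available, every $\varphi\in\inte(\cone_{\vect^\t}^*)$ automatically satisfies $h_\varphi<\infty$. I will produce such a $\varphi_0$ by combining the Anosov-type lower bound of $\t$-convex representations with a comparison to the geodesic flow on $M$.

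The H\"older-continuous transverse equivariant maps $\xi\colon\bord\G\to\scr F_\t$ and $\eta\colon\bord\G\to\scr F_{\ii\t}$ provide uniform contraction for $\rho(\g^n)$ on $\scr F_\t$ and $\scr F_{\ii\t}$; passing to the limit $n\to\infty$ in these contraction estimates strengthens the qualitative proximality of Lemma \ref{lema:lox} into a uniform linear lower bound
$$\a(\lambda(\rho(\g)))\geq\mu|\g|\qquad\text{for all }\a\in\t,\ \g\in\G-\{e\},$$
with $\mu>0$ independent of $\g$; here $|\g|$ is the length of the closed geodesic on $M$ associated to $[\g]$. For Hitchin representations this estimate is due to Labourie \cite{labourie}; in the general $\t$-convex setting it is part of the theory of Anosov representations. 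I then take $\varphi_0=\sum_{\a\in\t}\omega_\a|_{\frak a_\t}$, where $\omega_\a$ is the fundamental weight dual to $\a$. Since the inverse Cartan matrix has nonnegative entries with positive diagonal, one can write $\omega_\a=\sum_\beta a_{\a\beta}\beta$ with $a_{\a\beta}\geq 0$ and $a_{\a\a}>0$; thus $\omega_\a\geq 0$ on $\frak a^+$ (so $\varphi_0\in\cone_{\vect^\t}^*$) and $\omega_\a\geq a_{\a\a}\a$ there. Moreover, for $\a\in\t$ one has $\omega_\a(H_\beta)=\delta_{\a\beta}=0$ for every $\beta\in\Pi-\t$, so $\omega_\a$ vanishes on $\ker p_\t$, giving $\omega_\a(\lambda(\rho(\g)))=\omega_\a|_{\frak a_\t}(\lambda_\t(\rho(\g)))$. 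Summing,
$$\varphi_0(\lambda_\t(\rho(\g)))=\sum_{\a\in\t}\omega_\a(\lambda(\rho(\g)))\geq c|\g|,\quad c:=\mu\sum_{\a\in\t}a_{\a\a}>0.$$

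Since $M$ is compact the topological entropy $h_{\textrm{top}}(\phi)$ of its geodesic flow $\phi$ is finite, so
$$\#\{[\g]\in[\G]-\{e\}:\varphi_0(\lambda_\t(\rho(\g)))\leq s\}\leq\#\{[\g]:|\g|\leq s/c\}$$
grows at most like $e^{h_{\textrm{top}}(\phi)s/c}$, giving $h_{\varphi_0}\leq h_{\textrm{top}}(\phi)/c<\infty$; the lemma after Corollary \ref{cor:propia} then yields $h_\varphi<\infty$ for every $\varphi\in\inte(\cone_{\vect^\t}^*)$. The main obstacle is the Anosov-type estimate $\a(\lambda(\rho(\g)))\geq\mu|\g|$: this is the genuine representation-theoretic input where the $\t$-convex hypothesis is used essentially, whereas the remainder is soft linear algebra, a Cartan-matrix argument, and comparison with the finite-entropy geodesic flow.
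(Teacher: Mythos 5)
Your argument follows essentially the same skeleton as \cite[Lemma 7.7]{quantitative}, which the paper cites as its proof: produce one $\varphi_0\in\cone_{\vect^\t}^*$ satisfying a linear lower bound $\varphi_0(\lambda_\t(\rho(\g)))\geq c|\g|$, compare with the finite topological entropy of the geodesic flow of $M$, and propagate to all of $\inte(\cone_{\vect^\t}^*)$ via the unnamed lemma following Corollary \ref{cor:propia}. The Cartan-matrix computation showing $\varphi_0=\sum_{\a\in\t}\omega_\a|_{\frak a_\t}$ belongs to $\cone_{\vect^\t}^*$, factors through $p_\t$, and dominates $a_{\a\a}\a$ on $\frak a^+$ is correct, and the entropy comparison at the end is fine.

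The gap is the Anosov-type estimate $\a(\lambda(\rho(\g)))\geq\mu|\g|$ for every $\a\in\t$ and $\g\in\G-\{e\}$, which you correctly identify as "the genuine representation-theoretic input" but do not prove. You describe it as obtained by "passing to the limit $n\to\infty$" in uniform contraction estimates and then attribute it to "the theory of Anosov representations." But the paper's definition of $\t$-convex only asks for H\"older equivariant transverse maps $\xi,\eta$, with no contraction hypothesis, and the implication $\t$-convex $\Rightarrow$ Anosov (which would hand you exactly this estimate) was not available at the time of writing. The cited \cite{quantitative} does not take the Anosov estimate as input; it proves a uniform linear lower bound on $\chi_\a(\lambda(\rho(\g)))$ directly from the $\t$-convex hypotheses, combining the proximality of Lemma \ref{lema:lox}, the H\"older continuity of $\xi$ and $\eta$, compactness of $\bord\G$, and $\delta$-hyperbolicity of $\G$ to control the Gromov product at the pair $(\g_-,\g_+)$ uniformly over all $\g$. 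Your "limit $n\to\infty$" step recovers proximality of a fixed $\rho(\g)$ (already Lemma \ref{lema:lox}) but not the uniformity in $\g$, which is exactly the content that is missing. Without an actual argument for the linear lower bound, the proof is incomplete.
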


\begin{proof}The proof follows exactly as \cite[Lemma 7.7]{quantitative}.
\end{proof}

Applying Corollary \ref{cor:propia} to the cocycle $\vect_\t$ one directly obtains:

\begin{cor}Let $\rho:\G\to G$ be Zariski-dense $\t$-convex representation, then the action of $\G$ on $\bord^2\G\times\frak a_\t$ via $\vect_\t,$ is properly discontinuous.
\end{cor}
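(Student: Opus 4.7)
The proof is essentially an immediate assembly of the pieces already developed, so I will frame it as a short deduction rather than a strategic construction.

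The plan is to verify the hypothesis of Corollary \ref{cor:propia} for the cocycle $c=\vect_\t$ with $V=\frak a_\t$. That corollary requires producing a linear form $\varphi\in\cone_{\vect_\t}^*$ for which the real-valued cocycle $\varphi\circ\vect_\t$ has finite exponential growth rate $h_\varphi$. Once such a $\varphi$ is at hand, properness of the $\G$-action on $\bord^2\G\times\frak a_\t$ follows directly.

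To produce $\varphi$, I would first invoke the structural remark made just before Lemma \ref{lema:finite}: since $\cone_{\vect_\t}\subset p_\t(\frak a^+)$ and the latter is a salient (i.e.\ line-free) cone, the dual cone $\cone_{\vect_\t}^*$ has non-empty interior. Pick any $\varphi\in\inte(\cone_{\vect_\t}^*)$. Then Lemma \ref{lema:finite}, applied to the Zariski-dense $\t$-convex representation $\rho$, gives $h_\varphi<\infty$ for this choice of $\varphi$.

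With the hypothesis of Corollary \ref{cor:propia} verified, the $\G$-action on $\bord^2\G\times\frak a_\t$ given by
\[
\g(x,y,v)=(\g x,\g y,v-\vect_\t(\g,y))
\]
is properly discontinuous, which is exactly the claim.

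There is no real obstacle here; the work has all been done in Section \ref{section:mixingweyl} (Corollary \ref{cor:propia}) and in Lemma \ref{lema:finite}. The only point that deserves any care is ensuring that $\inte(\cone_{\vect_\t}^*)$ is non-empty, which is guaranteed by the geometric fact that $p_\t(\frak a^+)$ contains no line, a property inherited from the closed Weyl chamber $\frak a^+$.
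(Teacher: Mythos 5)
Your proposal is correct and follows exactly the same route as the paper: the paper obtains this corollary directly by applying Corollary \ref{cor:propia} to $\vect_\t$, with the finiteness hypothesis supplied by Lemma \ref{lema:finite} and the non-emptiness of $\inte(\cone_{\vect^\t}^*)$ coming from the salient cone observation made just before that lemma.
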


Even though we will not use it on this work, we remark that Lemma \ref{lema:finite}, together with \cite[Corollary 4.1]{quantitative}, imply the following counting result:

\begin{cor}Let $\rho:\G\to G$ be a Zariski-dense $\t$-convex representation, and consider $\varphi$ in the interior of $\cone_{\vect^\t}^*.$ Then there exists $h_\varphi>0,$ such that $$h_\varphi t e^{-h_\varphi t}\#\{[\g]\in[\G]\textrm{ primitive}:\varphi(\lambda_\t(\rho\g))\leq t\}\to1,$$ as $t\to\infty.$
\end{cor}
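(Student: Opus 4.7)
The corollary reduces to a prime orbit theorem for a suitable reparametrization of the geodesic flow, obtained through the Hölder cocycle machinery developed in Section \ref{section:cocyclosreales}. Here is the outline.

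First, I would translate the problem into one about a single real-valued Hölder cocycle on $\bord\G$. The map $c_\varphi = \varphi\circ\vect_\t : \G\times\bord\G\to\R$ is, by construction, a Hölder cocycle. By Lemma \ref{lema:alpedo}, its period at $\g\in\G-\{e\}$ equals
\[
\l_{c_\varphi}(\g)=\varphi(\vect_\t(\g,\g_+))=\varphi(\lambda_\t(\rho(\g))).
\]
Since $\varphi\in\inte(\cone_{\vect^\t}^*)$, these periods are nonnegative, and Lemma \ref{lema:finite} tells us that the exponential growth rate $h_\varphi:=h_{c_\varphi}$ is finite and positive. Thus the counting problem in the corollary is exactly the counting of primitive conjugacy classes $[\g]$ with $\l_{c_\varphi}(\g)\leq t$.

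Next, I would invoke the Reparametrizing Theorem \ref{teo:reparametrisation} applied to the cocycle $c_\varphi$: the translation flow on $\G\/(\bord^2\G\times\R)$ is conjugated, via a Hölder-continuous homeomorphism, to a Hölder reparametrization $\psi^\varphi$ of the geodesic flow on $T^1M$, whose topological entropy is exactly $h_\varphi$. Under this conjugation, periodic orbits of $\psi^\varphi$ are in bijection with primitive conjugacy classes $[\g]$ in $\G-\{e\}$, and the $\psi^\varphi$-period of the orbit corresponding to $[\g]$ is precisely $\l_{c_\varphi}(\g)=\varphi(\lambda_\t(\rho(\g)))$ (this is how the reparametrization is built from the positive function cohomologous to $F_{c_\varphi}$, cf.\ Remark \ref{obs:F}).

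With this dictionary, the corollary becomes the statement that for the flow $\psi^\varphi$, the number of primitive periodic orbits of period $\leq t$ satisfies
\[
\#\{\tau \text{ primitive periodic for }\psi^\varphi : p(\tau)\leq t\}\sim\frac{e^{h_\varphi t}}{h_\varphi t},\qquad t\to\infty,
\]
which is the classical prime orbit theorem for transitive Anosov flows of Parry--Pollicott, extended to Hölder reparametrizations via a Markov coding (Bowen--Ruelle thermodynamic formalism on $(\E,\pi,r)$). The author has packaged this statement as \cite[Corollary 4.1]{quantitative}, stated directly in terms of Hölder cocycles with nonnegative periods and finite exponential growth rate; invoking it with $c_\varphi$ yields
\[
h_\varphi t\,e^{-h_\varphi t}\#\{[\g]\in[\G]\text{ primitive}:\l_{c_\varphi}(\g)\leq t\}\to 1.
\]
The main (nontrivial) ingredient is the prime orbit theorem itself, which is handled by the cited result; everything else is the translation from $c_\varphi$ to the flow $\psi^\varphi$ supplied by the Reparametrizing Theorem and Lemma \ref{lema:alpedo}.
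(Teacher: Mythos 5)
Your proposal matches the paper's argument exactly: the paper states that the corollary follows directly from Lemma \ref{lema:finite} together with \cite[Corollary 4.1]{quantitative}, which is precisely the route you take. Your additional unpacking of how Corollary 4.1 of \cite{quantitative} rests on the Reparametrizing Theorem and the prime orbit theorem for Anosov flows is correct and consistent with the cited reference, just more explicit than the paper's one-line remark.
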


\subsubsection*{Gromov product}\label{productogromov}


The purpose of this section, is to define a Gromov product for the pair $\{\vect_\t,\vo{\vect_\t}\}.$ We begin with the following result of Tits \cite{tits} (see also Humphreys \cite[Chapter XI]{LAG}).

\begin{prop}[Tits \cite{tits}]\label{prop:titss} For each $\alpha\in\Pi,$ there exists a finite dimensional proximal irreducible representation $\L_\alpha:G\to\PGL(V_\alpha),$ such that the highest weight $\chi_\alpha$ of $\L_\alpha,$ is an integer multiple of the fundamental weight $\om_\alpha.$ Moreover, any other weight of $\L_\a,$ is of the form $$\chi_\a-\a- \sum_{\beta \in\Pi} n_\beta \beta,$$ with $n_\beta\in\N.$
\end{prop}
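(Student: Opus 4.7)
The plan is to invoke the theorem of the highest weight for complex semisimple Lie algebras, adapted to account for the fact that $G$ is a real algebraic group which need not be simply connected. For each simple root $\a\in\Pi$, let $\om_\a\in\frak a^*$ denote the fundamental weight dual to $\a$, characterised by $\langle\om_\a,\beta^\vee\rangle=\delta_{\a\beta}$ for all $\beta\in\Pi$. The first step is to let $n_\a\in\N$ be the smallest positive integer such that $\chi_\a:=n_\a\om_\a$ lifts to an algebraic character of the maximal torus $\exp(\frak a)$ of $G$, and then to apply the highest weight theorem to $\frak g_\C$ to obtain an irreducible representation $V_\a$ with highest weight $\chi_\a$. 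By construction this representation exponentiates to an algebraic homomorphism $\L_\a\colon G\to\PGL(V_\a)$ (passing to $\PGL$ absorbs the discrepancy between $G$ and its adjoint form).

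For proximality, I would use that the highest-weight space of any irreducible highest weight representation is one-dimensional. Any other weight $\mu$ of $V_\a$ is of the form $\mu=\chi_\a-\sum_{\beta\in\Pi}n_\beta\beta$ with $n_\beta\in\N$ not all zero, so for $X$ in the interior of $\frak a^+$ one has $\chi_\a(X)-\mu(X)=\sum_\beta n_\beta\beta(X)>0$. Hence $\exp(X)$ acts on $V_\a$ with a strictly dominant eigenvalue of multiplicity one, and $\L_\a(\exp(X))$ is proximal on $\P(V_\a)$.

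To establish the shape of the non-highest weights, it suffices to show that $\chi_\a-\beta$ occurs in $V_\a$ only for $\beta=\a$. By the $\frak{sl}_2$-theory applied to the root subalgebra attached to $\beta$, the weight $\chi_\a-\beta$ appears in $V_\a$ if and only if $\langle\chi_\a,\beta^\vee\rangle>0$. Since $\langle n_\a\om_\a,\beta^\vee\rangle=n_\a\delta_{\a\beta}$, the only such $\beta$ is $\a$ itself. Consequently, any weight distinct from $\chi_\a$ is reached from $\chi_\a$ by first subtracting $\a$ and then subtracting further simple roots, so it has the stated form $\chi_\a-\a-\sum_{\beta\in\Pi}n_\beta\beta$ with $n_\beta\in\N$.

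The main obstacle is the descent from the Lie-algebraic highest-weight construction (which lives naturally over $\C$ and over the simply-connected cover of $G$) to a genuine algebraic homomorphism out of the real algebraic group $G$. This is precisely the role played by the multiplier $n_\a$: since the character lattice of $\exp(\frak a)\subset G$ may be strictly coarser than the full integral weight lattice, one must take a sufficiently divisible multiple of $\om_\a$ to land in it. Everything else (existence of $V_\a$, proximality, and the shape of the weight diagram) is then a direct application of standard finite-dimensional representation theory as presented, e.g., in Humphreys \cite{LAG}.
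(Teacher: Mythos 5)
The paper does not supply a proof of this proposition; it is stated as an external result of Tits (with a pointer to Humphreys), so there is no ``paper's own proof'' to compare against. Evaluated on its own terms, your sketch is correct only in the $\R$-split case and has a genuine gap in general.

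The issue is that you conflate the restricted root/weight data on $\frak a$ with the full root/weight data on a Cartan of $\frak g_\C$. In the paper, $\Pi$ is the set of simple \emph{restricted} roots of $\frak a$ on $\frak g$, and $\om_\a$ is a restricted fundamental weight, an element of $\frak a^*$. When $G$ is not split, $\frak a$ is strictly smaller than a Cartan subalgebra of $\frak g_\C$, so $\om_\a$ is not a weight in the sense required by the highest weight theorem, and you cannot simply ``apply the highest weight theorem to $\frak g_\C$ to obtain an irreducible representation with highest weight $\chi_\a$.'' One has to choose a compatible Cartan $\frak h=\frak a\oplus\frak t$, select a dominant integral weight $\widetilde\chi$ of $\frak h$ restricting to $n_\a\om_\a$ on $\frak a$, and then verify that the resulting $\frak g_\C$-module (i) can be realised over $\R$ as a representation of the given real form $G$, and (ii) is \emph{proximal} in the restricted sense, i.e.\ the $\frak a$-weight space for the top restricted weight is one-dimensional. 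Point (ii) is not automatic: several $\frak h$-weights can restrict to the same $\frak a$-weight, and the top restricted weight space can a priori have dimension $>1$. Handling this requires the Satake/Tits diagram and the Galois action on the Dynkin diagram, which is precisely the technical content of Tits's theorem and is absent from your sketch. For the same reason, your $\frak{sl}_2$-argument for the shape of the non-highest weights is carried out in the full root system and needs a further step to descend to the restricted picture, since the restriction of a simple $\frak h$-root to $\frak a$ may vanish or coincide with that of another simple root. Your choice of $n_\a$ is also not quite the right condition: integrality of $n_\a\om_\a$ as a character of $\exp(\frak a)$ is necessary but does not by itself guarantee the existence of a $G$-representation with that restricted highest weight.

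Everything else (one-dimensionality of the top weight space giving proximality via an element $\exp(X)$ with $X\in\inter\frak a^+$, and the observation $\langle n_\a\om_\a,\beta^\vee\rangle=n_\a\delta_{\a\beta}$ forcing the next weight down to be $\chi_\a-\a$) is fine and matches the standard argument in the split case.
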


Fix a subset $\t$ of $\Pi$ and consider $\L_\a:G\to\PGL(V_\a),$ a representation given by Tits's proposition for $\a\in\t.$ Since $\L_\a$ is proximal, one obtains an equivariant map $\xi_\a:\scr F_\t\to\P(V_\a).$

The dual representation $\L_\a^*:G\to\PGL(V_\a^*)$ is also proximal and its highest weight is $\chi_\a\ii.$ Hence, one obtains another equivariant map $\eta_\a=\xi_{\ii\a}:\scr F_{\ii\t}\to\P(V_\a^*).$ Moreover, if $(x,y)\in\posgen_\t$ then $$\eta_\a(x)(\xi_\a(y))\neq0.$$

Consider a scalar product on $V_\a$ such that $\L_\a(K)$ is orthogonal and such that $\Lambda_\a(\exp\frak a)$ is symmetric. The euclidean norm $\|\ \|_\a,$ induced by this scalar product, verifies $$\log\|\L_\a(g)\|_\a=\chi_\a(a(g)),$$ for every $g\in G,$ where $a:G\to\frak a^+$ is the Cartan projection (remark that the operator norm only depends on $\R_+\|\ \|_\a$). 

\begin{lema}[{Quint \cite[Lemma 6.4]{quint1}}]\label{lema:busemanna} For every $\a\in\t$ and $v\in\xi_\a(x)$ one has, $$\chi_\a(\bus_\t(g,x))=\log\frac{\|\L_\a(g)v\|_\a}{\|v\|_\a}.$$
\end{lema}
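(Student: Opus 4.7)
The plan is to reduce the identity on $\frak a_\theta$ to the analogous identity on $\frak a$ for $\beta_\Pi$, and then verify the latter by a direct computation using the Iwasawa decomposition applied to a highest weight vector.

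\textbf{Step 1: Reduce to the full flag case.} The first observation is that the left-hand side $\chi_\alpha(\beta_\theta(g,x))$ actually equals $\chi_\alpha(\beta_\Pi(g,\tilde x))$ for any lift $\tilde x \in \scr F$ of $x \in \scr F_\theta$. Indeed, since $\chi_\alpha$ is a positive multiple of the fundamental weight $\omega_\alpha$, and $\alpha \in \theta$, every simple reflection $s_\beta$ with $\beta \in \Pi - \theta$ fixes $\omega_\alpha$ (because $\langle \omega_\alpha, \beta^\vee\rangle = \delta_{\alpha\beta} = 0$). Hence $\chi_\alpha$ is $W_\theta$-invariant, and since $p_\theta : \frak a \to \frak a_\theta$ is the $W_\theta$-invariant projection, $\chi_\alpha \circ p_\theta = \chi_\alpha$ on $\frak a$. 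Combined with the defining relation $\beta_\theta = p_\theta \circ \beta_\Pi$, this reduces the problem to the analogous statement for $\beta_\Pi$.

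\textbf{Step 2: Set up the Iwasawa computation.} Write $x = kM \in \scr F$ for some $k \in K$, and let $v_0 \in V_\alpha$ be a highest weight vector for $\Lambda_\alpha$ (spanning $\xi_\alpha([P])$). Since both sides of the identity only depend on the line $\xi_\alpha(x)$, it suffices to prove it for $v = \Lambda_\alpha(k) v_0$. By Iwasawa decomposition, write
\[
gk = l\,\exp\!\bigl(\beta_\Pi(g,kM)\bigr)\,n, \qquad l \in K,\ n \in N.
\]

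\textbf{Step 3: Apply $\Lambda_\alpha$ to the highest weight vector.} Two standard facts about the highest weight vector $v_0$ will do the work: (i) $\Lambda_\alpha(\exp H)\, v_0 = e^{\chi_\alpha(H)} v_0$ for $H \in \frak a$; and (ii) $\Lambda_\alpha(n) v_0 = v_0$ for every $n \in N$, because $\frak n = \bigoplus_{\beta \in \Sigma^+} \frak g_\beta$ raises weights and $\chi_\alpha$ is maximal. Consequently,
\[
\Lambda_\alpha(g)\, v \;=\; \Lambda_\alpha(gk)\, v_0 \;=\; e^{\chi_\alpha(\beta_\Pi(g,kM))}\, \Lambda_\alpha(l)\, v_0.
\]

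\textbf{Step 4: Use $K$-invariance of the norm.} By the choice of the scalar product, $\Lambda_\alpha(K)$ acts by orthogonal transformations, so $\|\Lambda_\alpha(l) v_0\|_\alpha = \|v_0\|_\alpha$ and $\|v\|_\alpha = \|\Lambda_\alpha(k) v_0\|_\alpha = \|v_0\|_\alpha$. Taking norms and logarithms,
\[
\log \frac{\|\Lambda_\alpha(g)v\|_\alpha}{\|v\|_\alpha} \;=\; \chi_\alpha\bigl(\beta_\Pi(g,kM)\bigr) \;=\; \chi_\alpha\bigl(\beta_\theta(g,x)\bigr),
\]
which is the desired identity.

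There is really no serious obstacle here: the main conceptual point is the $W_\theta$-invariance of $\chi_\alpha$ for $\alpha \in \theta$, which is exactly what allows the computation on $\scr F$ to descend to $\scr F_\theta$; everything else is a direct unwinding of the Iwasawa decomposition against a highest weight vector, using only the two normalizations on $\|\cdot\|_\alpha$ (that $K$ acts orthogonally and that $\exp \frak a$ acts by the weight characters). The only care needed is to check that the chosen $v = \Lambda_\alpha(k)v_0$ does span $\xi_\alpha(x)$, which follows from the equivariance of $\xi_\alpha$ and the fact that $\xi_\alpha([P_\theta])$ is the highest weight line.
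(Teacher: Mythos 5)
The paper does not prove this lemma: it is quoted directly as Quint's Lemma~6.4 from \cite{quint1}, and no argument for it appears in the text. So there is no in-paper proof to compare against, and your reconstruction has to be judged on its own.

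That said, your proof is correct, and it is essentially the argument Quint gives. The two load-bearing points are exactly the ones you isolate. First, since $\chi_\a$ is a multiple of the fundamental weight $\om_\a$ with $\a\in\t$, it vanishes on the coroots $\beta^\vee$ for $\beta\in\Pi-\t$, hence on the $W_\t$-stable complement $\ker p_\t$ of $\frak a_\t$; this gives $\chi_\a\circ p_\t=\chi_\a$ and reduces the identity to the one for $\bus_\Pi$ on $\scr F$ (this is also what makes $\chi_\a\circ\bus_\Pi$ descend to $\scr F_\t$ in the first place, via Quint's Lemmas 6.1--6.2). Second, applying the Iwasawa decomposition $gk=l\exp(\bus_\Pi(g,kM))n$ to a highest weight vector $v_0$ uses that $N$ fixes $v_0$ (positive root vectors annihilate the highest weight) and that $\exp\frak a$ acts by $e^{\chi_\a}$; the $K$-invariance of the norm then kills the $k$ and $l$ factors. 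Two small remarks worth keeping in mind but not gaps in your argument: you should take $v=\L_\a(k)v_0$ with $k$ chosen so that $\tilde x=kM\in\scr F$ lifts $x$, and it is worth noting (as you do implicitly) that $P_\t$ really does stabilize the highest weight line of $\L_\a$ when $\a\in\t$ --- this follows from Tits's description of the weights of $\L_\a$, since $\chi_\a-\beta$ is not a weight for $\beta\in\<\Pi-\t\>$, so the lowering operators in $\frak p_\t$ also kill $v_0$.
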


The set $\{\om_\a|_{\frak a_\t}:a\in\t\}$ is a basis of $\frak a_\t^*,$ and hence so is $\{\chi_\a|_{\frak a_\t}\}_{\a\in\t}.$ We will define the \emph{Gromov product} $\Gr_\t:\posgen_\t\to \frak a_\t,$ by specifying $\chi_\a\circ\Gr_\t$ for every $\a\in\t.$ Define by $$\chi_\a(\Gr_\t(x,y))= \log\frac{|\varphi(v)|}{\|\varphi\|_\a\|v\|_\a},$$ for any $\varphi\in\eta_\a(x)$ and $v\in\xi_\a(y).$

\begin{lema}\label{lema:gromov2} For every $g\in G$ and $(x,y)\in\posgen_\t,$ one has $$\Gr_\t(gx,gy)- \Gr_\t(x,y)=- (\ii\bus_{\ii\t}(g,x)+\bus_\t(g,y)).$$
\end{lema}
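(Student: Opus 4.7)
The plan is to verify the identity by computing both sides after applying each basis element $\chi_\a|_{\frak a_\t}$ (for $\a \in \t$) and invoking Lemma~\ref{lema:busemanna} twice: once for the representation $\L_\a$ and once for its dual $\L_\a^*$.

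Fix $\a \in \t$, pick $\varphi \in \eta_\a(x) \subset V_\a^*$ and $v \in \xi_\a(y) \subset V_\a$. By equivariance of $\xi_\a$ and $\eta_\a = \xi_{\ii\a}$, one has $\L_\a(g)v \in \xi_\a(gy)$ and $\L_\a^*(g)\varphi \in \eta_\a(gx)$, so these vectors may be used to compute $\chi_\a(\Gr_\t(gx,gy))$ from the definition. Using the tautological pairing identity $(\L_\a^*(g)\varphi)(\L_\a(g)v) = \varphi(v)$, I would split the logarithm:
\begin{equation*}
\chi_\a(\Gr_\t(gx,gy)) = \log\frac{|\varphi(v)|}{\|\varphi\|_\a\|v\|_\a} - \log\frac{\|\L_\a(g)v\|_\a}{\|v\|_\a} - \log\frac{\|\L_\a^*(g)\varphi\|_\a}{\|\varphi\|_\a}.
\end{equation*}
The first term is exactly $\chi_\a(\Gr_\t(x,y))$.

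Lemma~\ref{lema:busemanna} applied to $\L_\a$ and the point $y \in \scr F_\t$ identifies the second term with $\chi_\a(\bus_\t(g,y))$. For the third term, one applies the analogue of Lemma~\ref{lema:busemanna} to the dual representation $\L_\a^*$, whose highest weight is $\chi_\a \circ \ii = \chi_{\ii\a}$, whose proximal equivariant map is $\eta_\a = \xi_{\ii\a}$, and which is compatible with the norm $\|\cdot\|_\a$ on $V_\a^*$ (since $K$-orthogonality transposes to the dual and symmetry of $\L_\a(\exp\frak a)$ passes to the dual basis); this gives
\begin{equation*}
\log\frac{\|\L_\a^*(g)\varphi\|_\a}{\|\varphi\|_\a} = \chi_{\ii\a}(\bus_{\ii\t}(g,x)) = \chi_\a\bigl(\ii\,\bus_{\ii\t}(g,x)\bigr).
\end{equation*}
Adding up, $\chi_\a(\Gr_\t(gx,gy) - \Gr_\t(x,y)) = -\chi_\a\bigl(\ii\,\bus_{\ii\t}(g,x)+\bus_\t(g,y)\bigr)$.

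Since $\{\chi_\a|_{\frak a_\t}\}_{\a\in\t}$ is a basis of $\frak a_\t^*$ (as noted just before the definition of $\Gr_\t$), the identity of $\chi_\a$-values for every $\a \in \t$ upgrades to the vector identity in $\frak a_\t$, which is exactly the claim. The only step requiring a small check beyond a direct substitution is the dual version of Lemma~\ref{lema:busemanna}, i.e.\ that $\L_\a^*$ satisfies the hypotheses (proximal irreducible, $K$-orthogonal, symmetric on $\exp\frak a$) and that its equivariant map is indeed $\eta_\a$; this is the main obstacle, but it is routine once one unpacks how duality interacts with the Cartan involution.
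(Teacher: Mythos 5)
Your proof is correct and follows essentially the same route as the paper: decompose $\chi_\a(\Gr_\t(gx,gy))-\chi_\a(\Gr_\t(x,y))$ into the two norm-ratio logarithms via the tautological pairing, then identify each with the appropriate Busemann term using Quint's Lemma~\ref{lema:busemanna} for $\L_\a$ and its dual $\L_\a^*$ (the latter being the Tits representation for $\ii\a$, so no separate verification is really needed). The paper records the same algebraic identity (modulo a small sign typo in its displayed formula) and invokes Lemma~\ref{lema:busemanna} exactly as you do.
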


\begin{proof} For a norm on a vector space $V,$ every $g\in \PGL(V)$ and every $(\varphi,v) \in\P(V^*)\times\P(V) -\{(\varphi,v)\in\P(V^*)\times\P(V):\varphi(v)=0\}$ one has $$\log\frac{|\varphi\circ
g^{-1}(gv)|}{\|\varphi\circ g^{-1}\|\|gv\|}-\log \frac{|\varphi(v)|}{\|\varphi\|\|v\|}=-\log \frac{\|g\varphi\|}{\|\varphi\|}+\log\frac{\|gv\|}{\|v\|}.$$ The lemma follows from this formula together with the definition of $\Gr_\t$ and Quint's Lemma \ref{lema:busemanna}.
\end{proof}

The following corollary is immediate.

\begin{cor} Let $\rho:\G\to G$ be a Zariski-dense $\t$-convex representation. The function $[\cdot,\cdot]:\bord\G^{(2)}\to\frak a_\t$ defined by $$[x,y]=\Gr_\t(\eta(x),\xi(y)),$$ is a Gromov product for the pair $\{\vect_\t,\vo{\vect_\t}\}.$
\end{cor}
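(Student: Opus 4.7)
The plan is essentially a direct verification: unwind both sides of the Gromov product identity using the definitions of $\vect_\t$, $\vo{\vect_\t}$, and $[\cdot,\cdot]$, and then invoke Lemma \ref{lema:gromov2} applied to $g=\rho(\g)$.

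First, I would fix $\g\in\G$ and $(x,y)\in\bord^{(2)}\G$, and use the $\rho$-equivariance of the boundary maps $\xi:\bord\G\to\scr F_\t$ and $\eta:\bord\G\to\scr F_{\ii\t}$ to write
$$[\g x,\g y]=\Gr_\t(\eta(\g x),\xi(\g y))=\Gr_\t(\rho(\g)\eta(x),\rho(\g)\xi(y)).$$
Since $(\eta(x),\xi(y))\in\posgen_\t$ by the definition of $\t$-convex representation, Lemma \ref{lema:gromov2} applies to the pair $(\eta(x),\xi(y))$ with $g=\rho(\g)$, giving
$$\Gr_\t(\rho(\g)\eta(x),\rho(\g)\xi(y))-\Gr_\t(\eta(x),\xi(y))=-\bigl(\ii\bus_{\ii\t}(\rho(\g),\eta(x))+\bus_\t(\rho(\g),\xi(y))\bigr).$$

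Second, I would identify the right-hand side with the required expression. By the very definition of the cocycle associated to the representation, $\vect_\t(\g,y)=\bus_\t(\rho(\g),\xi(y))$, and since $\rho$ is also $\ii\t$-convex with boundary map $\eta$, the dual cocycle satisfies
$$\vo{\vect_\t}(\g,x)=\ii\vect_{\ii\t}(\g,x)=\ii\bus_{\ii\t}(\rho(\g),\eta(x)).$$
Combining these identifications with the displayed formula yields exactly
$$[\g x,\g y]-[x,y]=-\bigl(\vo{\vect_\t}(\g,x)+\vect_\t(\g,y)\bigr),$$
which is the Gromov product relation.

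There is no real obstacle here: all the heavy lifting has been done in Lemma \ref{lema:gromov2} (the transformation rule for $\Gr_\t$ under $G$) and in the preceding identifications of $\vect_\t$ and $\vo{\vect_\t}$ as pullbacks of Quint's Busemann cocycles by $\rho$ through the equivariant maps. The only points to be slightly careful about are (i) that the argument $(\eta(x),\xi(y))$ lies in $\posgen_\t$, which is precisely the hypothesis in the definition of $\t$-convex, and (ii) the $\ii$-twist in the definition $\vo{\vect_\t}=\ii\vect_{\ii\t}$, which matches exactly the $\ii\bus_{\ii\t}$ term that appears in Lemma \ref{lema:gromov2}.
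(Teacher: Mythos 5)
Your proof is correct and is exactly the direct verification the paper leaves implicit when it calls the corollary "immediate": pull back Lemma \ref{lema:gromov2} along the equivariant maps $(\eta,\xi)$, then recognize the two Busemann terms as $\vo{\vect_\t}(\g,x)$ and $\vect_\t(\g,y)$ using $\vo{\vect_\t}=\ii\vect_{\ii\t}$. Nothing is missing.
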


\subsubsection*{Mixing}

We need the following theorem of Benoist \cite{benoist2}:

\begin{teo}[Benoist {\cite[Main Proposition]{benoist2}}]\label{teo:densos} Consider a Zariski-dense subgroup $\grupo$ of $G.$ Then the group generated by $\{\lambda(g):g\in \grupo\}$ is dense in $\frak a.$
\end{teo}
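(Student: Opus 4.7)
My strategy is proof by contradiction. Suppose the closed additive subgroup $H:=\overline{\langle \lambda(g):g\in\grupo\rangle}\subset\frak a$ is proper. Then there exists a nonzero linear form $\chi\in\frak a^*$ with $\chi(H)\subset\Z$, equivalently $\chi(\lambda(g))\in\Z$ for every $g\in\grupo$; the plan is to derive a contradiction from this.

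First, by the Abels--Margulis--Soifer theorem on the existence of $\R$-regular elements in any Zariski-dense subgroup of $G$, I fix a loxodromic element $g_0\in\grupo$ with $\lambda(g_0)\in\inte(\frak a^+)$, attracting flag $g_0^+\in\scr F$, and repelling hyperplane $g_0^-\in\scr F_{\ii\Pi}$. The central technical input is an asymptotic expansion for Jordan projections of products, obtainable via Tits's representations $\L_\a$ together with Lemma~\ref{lema:busemanna}: for every $h\in G$ such that $hg_0^-$ and $g_0^+$ are in general position, the element $g_0^n h$ is loxodromic for $n$ large and
$$
\lambda(g_0^n h)\;=\;n\,\lambda(g_0)\;+\;w(h)\;+\;o(1), \qquad n\to\infty,
$$
where $w(h)\in\frak a$ is characterised by $\chi_\a(w(h))=\log|f_+^\a(\L_\a(h)\,v_+^\a)|$, with $v_+^\a\in V_\a$ a top eigenvector of $\L_\a(g_0)$ and $f_+^\a\in V_\a^*$ the dual top covector normalised by $f_+^\a(v_+^\a)=1$. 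In particular $w$ is real-analytic on the transversality-open subset of $G$.

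Since $H$ is closed and contains both $\lambda(g_0^n h)$ and $n\lambda(g_0)$, passing to the limit yields $w(h)\in H$ for every transverse $h\in\grupo$; applying $\chi$ gives $\chi(w(h))\in\Z$ on that set. By Zariski-density of $\grupo$ in $G$, these $h$'s form a topologically dense subset of the transversality-open real-analytic manifold, and a continuous $\Z$-valued function on a dense subset of a connected real-analytic manifold is locally constant, so $\chi\circ w\equiv c\in\Z$ on the transversality locus. But a direct computation with $\L_\a$ shows that $d_h w:T_hG\to\frak a$ is surjective at generic $h$ (essentially reducing to the trivial action of the Cartan subgroup on its Lie algebra by translation), so the image of $w$ contains an open subset of $\frak a$. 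Hence $\chi$ vanishes on a nonempty open subset of $\frak a$, forcing $\chi\equiv 0$, a contradiction.

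The main obstacle I anticipate is the rigorous derivation of the asymptotic expansion with the required pointwise control on $w(h)$. This demands careful handling of the Weyl-group ambiguity in the Cartan decomposition of $g_0^n h$ so that one obtains a genuine limit rather than bounded oscillation, together with explicit tracking of the top eigenspace of $\L_\a(g_0)$ and of its response to right multiplication by $h$; once these are under control, the local-surjectivity computation for $w$ reduces to standard facts about the Cartan subgroup and the transversality of $(g_0^-,g_0^+)$.
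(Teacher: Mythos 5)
The paper itself does not prove this statement; it is quoted as Benoist's result from \cite{benoist2}, so there is no in-paper proof to compare against. Your overall strategy (choose a loxodromic $g_0$, extract the asymptotic $\lambda(g_0^n h) = n\lambda(g_0) + w(h) + o(1)$ via Tits representations, deduce $w(h)\in H$, then try to exploit the size of the image of $w$) is genuinely in the spirit of Benoist's approach, and the asymptotic itself, together with the closed-subgroup reduction $\chi(H)\subset\Z$, is correctly set up.

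However, there is a genuine gap in the key step ``By Zariski-density of $\grupo$ in $G$, these $h$'s form a topologically dense subset of the transversality-open real-analytic manifold.'' Zariski-density does \emph{not} imply density in the Euclidean topology: a discrete group such as $\SL_d(\Z)$ is Zariski-dense in $\SL_d(\R)$ but is closed and nowhere dense. Consequently you cannot conclude that the continuous (in fact real-analytic) function $\chi\circ w$, known to be $\Z$-valued only on the \emph{countable} set $\grupo\cap G^{\mathrm{trans}}$, is locally constant on $G^{\mathrm{trans}}$. Since $\chi\circ w$ is not a regular algebraic function (it involves $\log|\cdot|$ of matrix coefficients and $\chi$ may have irrational coordinates), you also cannot fall back on the purely algebraic version of Zariski-density (``a regular function vanishing on a Zariski-dense set vanishes''). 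An honest repair has to exchange density in $G$ for a statement about the limit set $\Lambda_{\grupo}\subset\scr F$, where $\grupo$ does act minimally and attracting/repelling flags of loxodromic elements are topologically dense -- but then one must cope with $\Lambda_{\grupo}$ typically being totally disconnected, so ``locally constant'' no longer yields ``constant,'' and the local-submersion argument for $w$ does not apply directly to a Cantor set. This is exactly the delicate point that Benoist's original argument is organized around, and as written your proof does not address it.
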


Recall that the Bowen-Margulis measure for $\varphi\in\inte({\cone_\rho^\t}^*),$ is the measure $\Pats_\varphi$ on $\G\/(\bord^2\G\times\frak a_\t),$ induced on the quotient by $$e^{-h_\varphi[\cdot,\cdot]_\varphi}\vo{\mu_\varphi}\otimes\mu_\varphi\otimes\Leb_{\frak a_\t},$$
where $\mu_\varphi$ and $\vo{\mu_\varphi}$ are the Patterson-Sullivan probability measures with cocycles $\varphi\circ\vect_\t$ and $\varphi\circ\vo{\vect_\t},$ respectively. Benoist's theorem (and the continuity of $p_\t$) guarantees the missing hypothesis of Theorem \ref{teo:melange}, applied using $c=\beta_\t,$ and we obtain the following.

\begin{teo}\label{teo:mixingtheta} Let $\rho:\G\to G$ be a Zariski-dense $\t$-convex representation, and consider $\varphi$ in the interior of ${\cone^\t_\rho}^*,$ and $v_0\in\ker\varphi.$ Then there exists an euclidean norm $|\cdot|$ on $\frak a$ such that, for any two compactly supported continuous functions $f_0,f_1:\G\/(\bord^2\G\times\frak a_\t)\to\R$ one has $$(2\pi t)^{(\dim\frak a_\t-1)/2}\Pats_\varphi(f_0\cdot f_1\circ \om_t^{\varphi, v_0})\to e^{-\II(v_0)/2}\Pats_\varphi(f_0) \Pats_\varphi(f_1),$$ as $t\to\infty.$ 
\end{teo}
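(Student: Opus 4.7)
The plan is to obtain Theorem~\ref{teo:mixingtheta} as a direct application of the abstract mixing result Theorem~\ref{teo:melange}, applied to the H\"older cocycle $c=\vect_\t:\G\times\bord\G\to\frak a_\t$ associated to the $\t$-convex representation $\rho$. Under this identification, $V=\frak a_\t$, the Gromov product appearing in $\widetilde{\Pats_\varphi}$ is the one constructed in the previous subsection from Tits's representations $\{\Lambda_\a\}_{\a\in\t}$, and the flow $\om^{\varphi,v_0}_t$ of the theorem coincides with the one built from $c=\vect_\t$ in Theorem~\ref{teo:melange}. It therefore suffices to verify the two hypotheses of Theorem~\ref{teo:melange}: density of the group generated by the periods of $\vect_\t$ in $\frak a_\t$, and finiteness of $h_\varphi$.

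\emph{Density of the group of periods.} By Lemma~\ref{lema:alpedo}, the period of $\vect_\t$ at $\g\in\G-\{e\}$ is $\lambda_\t(\rho(\g))=p_\t(\lambda(\rho(\g)))$. Since $\rho$ is Zariski-dense, Benoist's Theorem~\ref{teo:densos} applies to $\rho(\G)$ and guarantees that the additive subgroup of $\frak a$ generated by $\{\lambda(\rho(\g)):\g\in\G\}$ is dense in $\frak a$. The projection $p_\t:\frak a\to\frak a_\t$ is a continuous surjective linear map, hence a continuous surjective homomorphism of abelian topological groups, and therefore sends this dense subgroup onto a dense subgroup of $\frak a_\t$; this image is precisely the group generated by the periods of $\vect_\t$.

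\emph{Finiteness of $h_\varphi$.} This is immediate from Lemma~\ref{lema:finite}: since $\varphi$ lies in the interior of ${\cone^\t_\rho}^{*}=\cone_{\vect^\t}^{*}$, the real-valued cocycle $\varphi\circ\vect_\t$ has finite exponential growth rate, i.e.\ $h_\varphi<\infty$.

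With both hypotheses in place, Theorem~\ref{teo:melange} produces an euclidean norm $|\cdot|$ on $\frak a_\t$ (which we extend arbitrarily to a euclidean norm on $\frak a$) and yields the stated asymptotic with the function $\II$ defined by~(\ref{eq:I}); the multiplicative constant can be absorbed into the normalization of the Lebesgue measure $\Leb_{\frak a_\t}$ used to define $\Pats_\varphi$. The step that plays the role of the main obstacle is already hidden in the earlier work: all the analytic difficulty lies in Proposition~\ref{prop:topo} (straightening the $V$-action on $\G\backslash(\bord^2\G\times V)$ into a skew-product over a reparametrization of the geodesic flow) and in the subshift mixing result of Thirion. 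The specific input for the present setting is Benoist's density theorem, which is what prevents the conclusion from degenerating and which turns the abstract statement into a concrete mixing theorem on $\rho(\G)\backslash G/M$ along the direction $u_\varphi$.
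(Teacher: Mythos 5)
Your proposal is correct and is exactly the paper's argument: the paper compresses it to the single sentence ``Benoist's theorem (and the continuity of $p_\t$) guarantees the missing hypothesis of Theorem~\ref{teo:melange}, applied using $c=\beta_\t$,'' and your write-up simply unpacks this — Lemma~\ref{lema:alpedo} identifying periods with $\lambda_\t(\rho(\g))$, Theorem~\ref{teo:densos} plus surjectivity and continuity of $p_\t$ for density in $\frak a_\t$, and Lemma~\ref{lema:finite} for $h_\varphi<\infty$ — along with the (correct) observation that the constant $c$ from Theorem~\ref{teo:melange} can be normalized away through the choice of $\Leb_{\frak a_\t}$.
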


\noindent (Recall the definition of $\II$ on equation (\ref{eq:I})).


\subsubsection*{The growth indicator function}

Consider a $G$-invariant Riemannian metric on $X,$ and $\|\ \|$ the induced Euclidean norm on $\frak a,$ invariant under the Weyl group. Recall that if $g\in G,$ then  $\|a(g)\|=d_X([K],g[K]).$ Consider a Zariski-dense discrete subgroup $\grupo$ of $G,$ and define by $$h_\grupo=\limsup_{s\to \infty} \frac{\log \#\{g\in \grupo: \|a(g)\|\leq s\}}{s}.$$ Recall that in the introduction we have defined $\psi_\grupo,$ the growth indicator of $\grupo.$ 

\begin{lema}[Quint {\cite[Corollaire 3.1.4]{quint2}}] Let $\grupo$ be a Zariski-dense subgroup of $G,$ then one has $$\sup_{v\in\frak a-\{0\}} \frac{\psi_\grupo(v)} {\|v\|}=h_\grupo.$$ 
\end{lema}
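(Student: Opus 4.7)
The plan is to prove the two inequalities separately. The upper bound is essentially immediate from the definitions, while the lower bound requires a finite covering (pigeonhole) argument on the Weyl chamber, together with a compactness argument to extract a distinguished direction.

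For the inequality $\sup_{v \neq 0} \psi_\grupo(v)/\|v\| \leq h_\grupo$, I would simply observe that for any $v \in \frak a - \{0\}$ and any open cone $\scr C \subset \frak a^+$ containing $v$, the set $\{g \in \grupo : a(g) \in \scr C,\ \|a(g)\| \leq s\}$ is contained in $\{g \in \grupo : \|a(g)\| \leq s\}$, hence $h_{\scr C} \leq h_\grupo$. Taking the infimum over such $\scr C$ gives $\psi_\grupo(v)/\|v\| \leq h_\grupo$ for every $v$, and the supremum follows.

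For the converse inequality I would argue as follows. Fix $\eps > 0$ and cover the unit sphere of $\frak a^+$ (for $\|\cdot\|$) by finitely many open sets of diameter at most $\eps$; the positive $\R$-cones over these open sets form open cones $\scr C_1^\eps, \ldots, \scr C_{N_\eps}^\eps$ in $\frak a$ whose union covers $\frak a^+ - \{0\}$. Then
\[
\#\{g \in \grupo : \|a(g)\| \leq s\} \leq \#\{g \in \grupo : a(g) = 0\} + \sum_{i=1}^{N_\eps}\#\{g \in \grupo : a(g) \in \scr C_i^\eps,\ \|a(g)\| \leq s\},
\]
and since $N_\eps$ is fixed, taking the $\limsup$ of $(\log \cdot)/s$ yields $h_\grupo \leq \max_{i} h_{\scr C_i^\eps}$. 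Pick an index $i(\eps)$ attaining the maximum and a unit vector $v^\eps \in \scr C_{i(\eps)}^\eps$.

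Now take $\eps_n \to 0$; by compactness of the unit sphere in $\frak a^+$, a subsequence of $(v^{\eps_n})$ converges to some unit vector $v_\infty \in \frak a^+$. For any open cone $\scr C$ containing $v_\infty$, the angular diameter of $\scr C_{i(\eps_n)}^{\eps_n}$ is at most $\eps_n$ and its direction $v^{\eps_n}$ converges to $v_\infty$, so for $n$ large enough $\scr C_{i(\eps_n)}^{\eps_n} \subset \scr C$. Therefore $h_{\scr C} \geq h_{\scr C_{i(\eps_n)}^{\eps_n}} \geq h_\grupo$. Taking the infimum over all open cones containing $v_\infty$ gives $\psi_\grupo(v_\infty) \geq h_\grupo = h_\grupo \|v_\infty\|$, completing the proof.

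The only real technical point is the angular argument ensuring $\scr C_{i(\eps_n)}^{\eps_n} \subset \scr C$ eventually; this is where one must be careful to produce cones whose angular size and whose distinguished direction can both be controlled simultaneously, but once the cover is chosen to have diameter $\leq \eps$ on the unit sphere this is straightforward. Note that the argument needs no Zariski-density of $\grupo$ — hypothesis used throughout the paper to ensure $\cone_\grupo$ has nonempty interior — but implicitly we are using that the limiting direction $v_\infty$ lies in $\cone_\grupo$, which is automatic since otherwise $h_{\scr C} = 0$ for some open cone $\scr C$ around $v_\infty$, contradicting $h_{\scr C} \geq h_\grupo$ when $h_\grupo > 0$ (and if $h_\grupo = 0$ the statement reduces to Theorem \ref{teo:teoquint}).
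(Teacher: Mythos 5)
Your proof is correct, and since the paper merely cites this as Quint's Corollaire 3.1.4 without reproducing an argument, there is no in-paper proof to compare against; your covering-plus-pigeonhole argument is the standard one for this statement. The upper bound is immediate from monotonicity of $h_{\scr C}$ in $\scr C$, and the lower bound is handled correctly: the key step that $\limsup_s \max_i (\log F_i(s))/s = \max_i \limsup_s (\log F_i(s))/s$ for finitely many $i$ holds by pigeonholing a sequence realizing the outer $\limsup$, and the angular control ($\scr C_{i(\eps_n)}^{\eps_n}$ is contained in a ball of radius $\eps_n$ about $v^{\eps_n}$ on the unit sphere, hence eventually inside any fixed cone around $v_\infty$) is stated with exactly the care it needs.

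One small point of interpretation worth flagging: you work with open cones in $\frak a$, whereas the paper's phrasing ``open cone in $\frak a^+$'' could be misread to mean a cone that is open \emph{relative to} $\frak a^+$ or contained in $\frak a^+$. Since $v_\infty$ may lie on the boundary of $\frak a^+$, no open cone of $\frak a$ contained in $\frak a^+$ can contain it, so under the stricter reading the infimum in the definition of $\psi_\grupo(v_\infty)$ would be vacuous. The convention you adopt (cones open in $\frak a$, with the constraint $a(g)\in\scr C\cap\frak a^+$ automatic because $a$ takes values in $\frak a^+$) is the one in Quint's paper and makes the statement well posed; it is good that your argument is consistent with it. Your closing remark is also right: the argument uses only discreteness of $\grupo$ (to make $h_\grupo$ meaningful and to ensure $\#\{g:a(g)=0\}<\infty$), not Zariski-density, and the conclusion that the limiting direction lies in $\cone_\grupo$ comes out of the estimate $\psi_\grupo(v_\infty)\geq h_\grupo>-\infty$ rather than being needed as input.
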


If $\varphi\in \frak a^*$ is such that $\varphi(v)\geq\psi_\grupo(v),$ then $\|\varphi\|\geq h_\grupo.$ One is thus interested in the set $$D_\grupo=\{\varphi\in \frak a^*:\varphi\geq\psi_\grupo\}.$$ Since $\psi_\grupo$ is homogeneous and concave (recall Theorem \ref{teo:teoquint}), the set $D_\grupo$ is convex. The linear form $\ta_\grupo\in D_\grupo$ closest to the origin, is called the \emph{the growth form} of $\grupo,$ and verifies \begin{equation}\label{equation:normaexp}\|\ta_\grupo\|=h_\grupo.\end{equation} 

Again, since $\psi_\grupo$ is concave, and the balls of $\|\ \|$ are strictly convex, one obtains a unique direction $\R_+u_\grupo$ in $\cone_\grupo,$ which realizes the upper bound $$\sup_{v\in\frak a-\{0\}}\frac{\psi_\grupo(v)}{\|v\|},$$ this is called \emph{the growth direction} of $\grupo.$ Choose $u_\grupo$ in the growth direction such that $\ta_\grupo(u_\grupo)=1.$ 

The following remarks are direct consequences of the definitions:

\begin{obs}\label{obs:ortogonal} For every $v\in\R_+ u_\grupo$ one has $|\ta_\grupo(v)|=\|\ta\|\|v\|,$ consequently the hyperplane $\ker\ta_\grupo$ and $\R_+u_\grupo$ are orthogonal for $\|\cdot\|.$ 
\end{obs}

\begin{obs}\label{obs:fuera}The number of elements of $a(\grupo)$ that lie outside a given open cone containing $u_\grupo,$  has exponential growth rate strictly smaller than $h_\grupo.$ 
\end{obs}

A linear form $\varphi\in \frak a^*$ is \emph{tangent} to $\psi_\grupo$ at $x$ if $\varphi\in D_\grupo,$ and $\varphi(x)=\psi_\grupo(x).$ We say that $\psi_\grupo$ has \emph{vertical tangent} at $x,$ if for every $\varphi\in D_\grupo,$ one has $\varphi(x)>\psi_\grupo(x).$ 
Remark that $\ta_\grupo,$ is tangent to $\psi_\grupo,$ in the growth direction $\R_+ u_\grupo.$

Fix from now on a Zariski-dense hyperconvex representation $\rho:\G\to G,$ and denote by $\z:\bord\G\to\scr F$ its $\rho$-equivariant map. The image $\z(\bord\G)$ is the limit set $\Li_{\rho(\G)},$ and thus $$\z\times\z:\bord^2\G\to\Li_{\rho(\G)}^{(2)}$$ is a $\rho$-equivariant H\"older-continuous homeomorphism. Also, the cone $\cone_{\vect^\Pi}$ is the limit cone $\cone_\rho=\cone_{\rho(\G)}$ of $\rho(\G).$ One has the following result.

\begin{teo}[{\cite[Theorem A + Corollary 4.9]{exponential}}]\label{teo:tangente} The growth indicator $\psi_{\rho(\G)},$ of a Zariski-dense hypercon\-vex representation $\rho,$ is strictly concave, analytic on the interior of $\cone_{\rho(\G)},$ and with vertical tangent on the boundary. If $\varphi$ belongs to the interior of $\cone_\rho^*,$ then $h_\varphi\varphi$ is tangent to $\psi_\rho,$ in the dual direction $\uu\varphi.$
\end{teo}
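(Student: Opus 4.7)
The strategy is to push the entire analysis to $T^1M$ and invoke the thermodynamic formalism of the Anosov geodesic flow $\phi$. Applying Ledrappier's Theorem \ref{teo:ledrappier} component-wise to the Busemann cocycle $\vect_\Pi:\G\times\bord\G\to\frak a$, I obtain a H\"older-continuous map $F:T^1 M\to\frak a$ whose period on the closed orbit labeled by $[\g]$ equals $\lambda(\rho(\g))$. For $\varphi\in\inte(\cone_\rho^*)$, Lemmas \ref{lema:finite} and \ref{lema:funcionpositiva} imply that $\varphi\circ F$ is Liv\v sic-cohomologous to a positive function, so I can characterize $h_\varphi$ as the unique positive real satisfying $P(\phi,-h_\varphi\varphi\circ F)=0$; this agrees with the topological entropy of the reparametrized flow on $\G\backslash(\bord^2\G\times\R)$ given by Theorem \ref{teo:reparametrisation}.

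Next I would establish the tangency. Let $m_\varphi$ be the unique equilibrium state (Proposition \ref{prop:ruellebowen}) of $-h_\varphi\varphi\circ F$ and set $v_\varphi=\int F\,dm_\varphi$, so that $\uu\varphi=\R_+v_\varphi$ by construction. The pressure identity gives, for every $\phi$-invariant probability $m$,
$$h(\phi,m)\;\leq\;h_\varphi\,\varphi\!\Big(\!\int F\,dm\Big),$$
with equality if and only if $m=m_\varphi$. Coupled with the variational description $\psi_\rho(v)=\sup\{h(\phi,m):\int F\,dm\in\R_+v\}$, which one derives from Quint's Patterson-Sullivan theory (Theorem \ref{teo:JF-PS}) transferred to the geodesic flow through Ledrappier's correspondence, this yields $h_\varphi\varphi\geq\psi_\rho$ everywhere, with equality precisely along the ray $\uu\varphi$. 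Hence $h_\varphi\varphi\in D_\rho$ is tangent to $\psi_\rho$ at $\uu\varphi$, and the contact direction is unique for each $\varphi$.

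For analyticity, I would invoke Ruelle's real-analyticity of pressure: $(h,\varphi)\mapsto P(\phi,-h\varphi\circ F)$ is jointly analytic and $\partial_h P=-\int\varphi\circ F\,dm_\varphi<0$ by positivity of the periods of $\varphi\circ F$. The implicit function theorem makes $\varphi\mapsto h_\varphi$ analytic on $\inte(\cone_\rho^*)$, and $\uu\varphi$ is essentially the gradient of $\varphi\mapsto h_\varphi\varphi$, so equally analytic. The uniqueness of tangency upgrades $\varphi\mapsto\uu\varphi$ to an analytic bijection of projective cones $\P(\inte\cone_\rho^*)\to\P(\inte\cone_\rho)$; combined with the identity $\psi_\rho(\uu\varphi)=h_\varphi\varphi(\uu\varphi)$, this forces $\psi_\rho$ to be real-analytic on $\inte\cone_\rho$.

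Strict concavity on $\inte\cone_\rho$ then follows formally: were $\psi_\rho$ affine on a segment $[v_1,v_2]$ joining two distinct rays, the tangent form at the midpoint would contact $\psi_\rho$ at both endpoints, violating uniqueness of the contact direction. The vertical tangent on $\partial\cone_\rho$ is immediate afterwards, since every element of $D_\rho$ that touches $\psi_\rho$ somewhere must be of the form $h_\varphi\varphi$ for some $\varphi\in\inte\cone_\rho^*$, whose contact $\uu\varphi$ lies in the interior. The main obstacle in this program is the rigorous derivation of the variational principle for $\psi_\rho$ relating orbital growth in cones of $\frak a^+$ to topological pressure; it is here that Benoist's density Theorem \ref{teo:densos} and the Zariski density of $\rho(\G)$ enter in an essential way, ensuring that $\cone_\rho$ has non-empty interior and that $\vect_\Pi$ is suitably non-degenerate.
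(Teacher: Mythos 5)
The theorem is cited from the external reference \cite[Theorem A + Corollary 4.9]{exponential}; the present paper does not reprove it, so there is no in-paper argument to compare against. Your plan nevertheless tracks the thermodynamic-formalism strategy that underlies the cited result, and most of the formal manipulations (pressure equation $P(\phi,-h_\varphi\varphi\circ F)=0$, the inequality $h(\phi,m)\leq h_\varphi\,\varphi(\int F\,dm)$ with equality only at $m_\varphi$, Ruelle analyticity plus the implicit function theorem, and the tangency-at-a-single-ray argument for strict concavity) are sound modulo one central input that you correctly flag as the main obstacle but do not supply.

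That input is the variational identity $\psi_\rho(v)=\sup\{h(\phi,m):\int F\,dm\in\R_+v\}$. The growth indicator is defined by counting \emph{Cartan} projections $a(\rho(\g))$ in cones, while Ledrappier's correspondence and the pressure of the geodesic flow only see \emph{Jordan} projections $\lambda(\rho(\g))$, i.e., periods of closed orbits. Bridging the two is exactly where hyperconvexity enters the argument in the cited paper: one uses the equivariant map $\z$ and the associated shadow estimates to control $a(\rho(\g))$ up to bounded error by data living on $T^1M$, a Morse-lemma-type comparison that is not automatic for a general Zariski-dense discrete subgroup and does not follow from Quint's Theorem~\ref{teo:JF-PS} alone. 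Your one-line appeal to ``Quint's Patterson-Sullivan theory transferred to the geodesic flow through Ledrappier's correspondence'' therefore elides the hardest and most representation-specific step. A secondary, smaller gap: for vertical tangency on $\partial\cone_\rho$ and to close your strict-concavity argument you need \emph{surjectivity} of $\varphi\mapsto\uu\varphi$ onto $\P(\inte\cone_\rho)$, not merely that it is a well-defined analytic map; this requires a separate argument (e.g.\ properness/openness plus connectedness of the projectivized cones, or directly minimizing $\varphi\mapsto h_\varphi\varphi(u)$ over $\cone_\rho^*$ for fixed $u\in\inte\cone_\rho$) which your sketch does not provide.
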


\begin{obs}\label{obs:direction}Hence, for a Zariski-dense hyperconvex representation $\rho$ of $\G,$ the growth direction $\R_+u_{\rho(\G)},$ is the dual direction (in the sense of Section \ref{section:mixingweyl}) of its growth form $\ta_\rho=\ta_{\rho(\G)}$. Moreover, since $\psi_{\rho(\G)}$ has vertical tangent on the boundary of $\cone_{\rho(\G)},$ the growth direction $\R_+ u_{\rho(\G)},$ is contained in the interior of the limit cone.
\end{obs}

Recall that if $\varphi$ is in the interior of $\cone_\rho^*,$ then the H\"older cocycle $\varphi\circ\vect^\Pi$ has finite exponential growth rate. Ledrappier's Theorem \ref{teo:medidas} guarantees the existence of a Patterson-Sullivan probability measure $\mu_\varphi$ on $\bord\G,$ with cocycle $h_\varphi\varphi\circ\vect^\Pi.$ The following corollary is hence direct.

\begin{cor}\label{cor:medidaspaterson}Let $\rho:\G\to G$ be a Zariski-dense hyperconvex representation. For each $\varphi$ tangent to $\psi_{\rho(\G)},$ there exists a unique Patterson-Sullivan measure for $\varphi,$ denoted by $\nu_\varphi.$ Moreover, $\z$ induces an isomorphism between $\mu_\varphi$ and $\nu_\varphi.$
\end{cor}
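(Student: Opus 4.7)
The plan is to transport the unique Patterson-Sullivan measure on $\bord\G$ provided by Ledrappier's theory to the flag variety $\scr F$ via pushforward by $\z,$ and to verify uniqueness by pulling any $\varphi$-Patterson-Sullivan measure back via $\z^{-1}$ and applying Ledrappier's uniqueness.

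Given $\varphi$ tangent to $\psi_{\rho(\G)},$ Theorem \ref{teo:tangente} lets us write $\varphi=h_{\varphi_0}\varphi_0$ for some $\varphi_0$ in the interior of $\cone_\rho^*,$ and Lemma \ref{lema:finite} ensures that the real cocycle $\varphi_0\circ\vect_\Pi:\G\times\bord\G\to\R$ has finite exponential growth rate $h_{\varphi_0}.$ Ledrappier's Theorem \ref{teo:medidas} then produces a unique Patterson-Sullivan probability measure $\mu_\varphi$ on $\bord\G$ of cocycle $h_{\varphi_0}(\varphi_0\circ\vect_\Pi)=\varphi\circ\vect_\Pi.$ Hyperconvexity implies that $\z$ is injective (distinct points of $\bord\G$ go to pairs in $\posgen$), hence a homeomorphism from $\bord\G$ onto $\Li_{\rho(\G)},$ and $\nu_\varphi:=\z_*\mu_\varphi$ is a probability measure on $\scr F$ supported on $\Li_{\rho(\G)}.$

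Using $\rho$-equivariance of $\z$ and the identity $\vect_\Pi(\g,x)=\bus_\Pi(\rho(\g),\z(x)),$ one checks that for every $\g\in\G$ and $x\in\bord\G,$
$$\frac{d\rho(\g)_*\nu_\varphi}{d\nu_\varphi}(\z(x))=\frac{d\g_*\mu_\varphi}{d\mu_\varphi}(x)=e^{-\varphi(\vect_\Pi(\g^{-1},x))}=e^{-\varphi(\bus_\Pi(\rho(\g^{-1}),\z(x)))},$$
which is the $\varphi$-Patterson-Sullivan equation on $\scr F.$ Conversely, if $\nu$ is any $\varphi$-Patterson-Sullivan measure, it is supported on $\Li_{\rho(\G)},$ so $\mu:=(\z^{-1})_*\nu$ is well defined on $\bord\G$; the same equivariance computation run backwards shows that $\mu$ is a Patterson-Sullivan measure for the cocycle $\varphi\circ\vect_\Pi,$ and Ledrappier's uniqueness in Theorem \ref{teo:medidas} forces $\mu=\mu_\varphi,$ hence $\nu=\z_*\mu_\varphi=\nu_\varphi.$ The ``isomorphism'' claim is tautological from $\nu_\varphi=\z_*\mu_\varphi$ and the fact that $\z$ is a homeomorphism onto its image.

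The only genuinely delicate point is the scalar bookkeeping: Ledrappier's uniqueness statement is for Patterson-Sullivan measures whose cocycle parameter equals the exponential growth rate of the underlying real cocycle. Theorem \ref{teo:tangente} is precisely what converts a tangent linear form $\varphi$ on $\psi_{\rho(\G)}$ into a $\varphi_0\in\inte(\cone_\rho^*)$ with $\varphi=h_{\varphi_0}\varphi_0,$ making the parameters match; once this identification is in place, the rest is pushforward/pullback along a homeomorphism.
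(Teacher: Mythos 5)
Your proof is correct and follows essentially the same route the paper takes: push forward Ledrappier's unique Patterson-Sullivan measure on $\bord\G$ by the equivariant homeomorphism $\z$ onto $\Li_{\rho(\G)}$, translating the cocycle condition via $\vect_\Pi(\g,x)=\bus_\Pi(\rho(\g),\z(x))$, and pull back any competing $\varphi$-Patterson-Sullivan measure (which is supported on $\Li_{\rho(\G)}$ by definition) to invoke Ledrappier's uniqueness. Your ``scalar bookkeeping'' remark is the relevant point the paper also leans on implicitly: Theorem~\ref{teo:tangente} (strict concavity, analyticity, vertical tangent at the boundary) forces every $\varphi$ tangent to $\psi_{\rho(\G)}$ to lie in $\inte(\cone_\rho^*)$ with $h_\varphi=1$, so the cocycle exponent in Ledrappier's Theorem~\ref{teo:medidas} matches.
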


Consequently, the function $$\z\times\z\times \id:\bord^2\G\times\frak a\to \Li_{\rho(\G)}^{(2)}\times\frak a$$ is a $\rho$-equivariant homeomorphism, and induces on the quotients, a map still denoted by $\z\times\z\times \id:\G\/(\bord^2\G\times\frak a)\to \rho(\G)\/(\Li_{\rho(\G)}^{(2)}\times\frak a),$ which is a measurable isomorphism between the Bowen-Margulis measures of $\varphi$ on each side: $$(\z\times\z\times\id)_*\Pats_\varphi=\Pat_\varphi,$$ where $\Pats_\varphi$ is the Bowen-Margulis measure for $\varphi,$ defined on the introduction.

Theorem \ref{teo:mixingtheta}, together with Remark \ref{obs:direction}, imply the following mixing property of the Weyl chamber flow. Recall that the rank of $G$ is the dimension of $\frak a,$ and that the Weyl chamber flow is the right action by translations of $\exp(\frak a)$ on $\rho(\G)\/G/M.$

\begin{teo}\label{teo:mixing} Let $\rho:\G\to G$ be a Zariski-dense hyperconvex representation. Consider $\varphi$ in the interior of $\cone_\rho$ and consider $v_0 \in\ker\varphi,$ then there exists an euclidean norm $|\cdot|$ on $\frak a,$ such that for all compactly supported continuous functions $f_0,f_1:\rho(\G)\/G/M\to\R,$ one has $$(2\pi t)^{(\rank(G)-1)/2}\Pat_\varphi(f_0\cdot f_1\circ \exp({tu_\varphi+\sqrt tv_0}))$$ converges, as $t\to\infty,$ to $$ce^{-\II(v_0)/2} \Pat_{\varphi}(f_0) \Pat_\varphi(f_1),$$ for a constant $c>0.$ 
\end{teo}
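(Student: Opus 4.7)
The strategy is to transport the problem upstairs to $\G\/(\bord^2\G\times\frak a)$ and invoke Theorem \ref{teo:mixingtheta} applied with $\t=\Pi$, in which case $\frak a_\Pi=\frak a$ and the exponent $(\dim\frak a_\Pi-1)/2$ equals $(\rank(G)-1)/2$. There are three pieces to line up: the measures, the identification between the Weyl chamber flow downstairs and the translation flow $\om^{\varphi,v_0}$ upstairs, and the verification that the hypotheses of Theorem \ref{teo:mixingtheta} are met for the cocycle $\vect_\Pi$.

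For the first piece, I would use Hopf's parametrization $G/M=\posgen\times\frak a$ recalled in the introduction. Under this identification, right translation by $\exp(v)$, $v\in\frak a$, acts on $\posgen\times\frak a$ by $(x,y,w)\mapsto(x,y,w-v)$, so the Weyl chamber flow on $\rho(\G)\/G/M$ becomes translation by $-v$ on the $\frak a$-factor. Since $f_0,f_1$ are compactly supported and $\Pat_\varphi$ is supported in $\rho(\G)\/(\Li_{\rho(\G)}^{(2)}\times\frak a)$, only the restriction of the flow to this subset matters.

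For the second piece, Corollary \ref{cor:medidaspaterson} together with the discussion that follows it produces a $\G$-equivariant H\"older homeomorphism $\z\times\z\times\id:\bord^2\G\times\frak a\to\Li_{\rho(\G)}^{(2)}\times\frak a$ that commutes with the $\frak a$-action, descends to a measurable isomorphism on the quotients, and sends $\Pats_\varphi$ to $\Pat_\varphi$. Consequently it intertwines $\om^{\varphi,v_0}_t$ with the restriction of the Weyl chamber flow to the support of $\Pat_\varphi$, so the integral $\Pat_\varphi(f_0\cdot f_1\circ\exp(tu_\varphi+\sqrt{t}v_0))$ equals $\Pats_\varphi(\widetilde f_0\cdot\widetilde f_1\circ\om^{\varphi,v_0}_t)$, with $\widetilde f_i=f_i\circ(\z\times\z\times\id)$ compactly supported and continuous.

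For the third piece, the hypothesis that $\varphi$ sits in the interior of $\cone^*_\rho$ is precisely what is needed to apply Theorem \ref{teo:mixingtheta} with $c=\vect_\Pi$: Lemma \ref{lema:finite} yields $h_\varphi<\infty$, while Benoist's density Theorem \ref{teo:densos}, combined with $\lambda_\Pi=\lambda$ and the identification of $\vect_\Pi$-periods with Jordan projections from Lemma \ref{lema:alpedo}, guarantees that the group generated by the periods of $\vect_\Pi$ is dense in $\frak a$. Theorem \ref{teo:mixingtheta} then delivers the asymptotic, with a positive constant $c$, the Euclidean norm $|\cdot|$ it provides, and $\II$ as in equation (\ref{eq:I}).

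The only delicate point in this reduction is the sign convention: one must check that $\om^{\varphi,v_0}_t$, which quotients the map $(x,y,w)\mapsto(x,y,w-tu_\varphi-\sqrt{t}v_0)$, genuinely matches right translation by $\exp(tu_\varphi+\sqrt{t}v_0)$ under Hopf's parametrization, a direct computation once the conventions from Section \ref{section:convex} are fixed. Beyond this bookkeeping the statement is a clean transfer of Theorem \ref{teo:mixingtheta}, and no new ingredients beyond those developed in Sections \ref{section:mixingweyl} and \ref{section:convex} are required.
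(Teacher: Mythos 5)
Your proposal matches the paper's argument: the paper deduces Theorem \ref{teo:mixing} directly from Theorem \ref{teo:mixingtheta} (applied with $\t=\Pi$, so $\frak a_\Pi=\frak a$) together with the measurable isomorphism $(\z\times\z\times\id)_*\Pats_\varphi=\Pat_\varphi$ and Hopf's parametrization identifying the Weyl chamber flow with the translation flow $\om^{\varphi,v_0}$. You correctly supply the verifications (Lemma \ref{lema:finite} for $h_\varphi<\infty$, Benoist's Theorem \ref{teo:densos} plus Lemma \ref{lema:alpedo} for density of periods) that the paper leaves implicit, so this is the same proof, just spelled out in more detail.
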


The following corollary will be most useful to us. 

\begin{cor}\label{cor:mixingroblin}Let $\rho:\G\to G$ be a Zariski-dense hyperconvex representation. Then, given two compactly supported continuous functions $f_0,f_1:\rho(\G)\/G/M,$ one has  $$e^{-\|\ta_\rho\|T}\int_{B(0,T)\cap\frak a^+} e^{\ta_{\rho}(u)}\Pat_\TT( f_0\cdot f_1\circ \exp(u))d\Leb_{\frak a}(u)\rightarrow C \Pat_\TT( f_0)\Pat_\TT(f_1),$$ as $T\to\infty,$ for a constant $C>0.$
\end{cor}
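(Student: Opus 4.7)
The idea is to decompose $u \in \frak a$ in coordinates adapted to the mixing theorem, then apply Theorem \ref{teo:mixing} pointwise and extract the main term by a controlled integration. Using the orthogonality of $\R u_\rho$ and $\ker \ta_\rho$ (Remark \ref{obs:ortogonal}), I would write $u = s u_\rho + v$ with $v\in\ker\ta_\rho$, so that $\ta_\rho(u)=s$, $\|u\|^2 = s^2/\|\ta_\rho\|^2 + \|v\|^2$, and $d\Leb_{\frak a}(u) = \|\ta_\rho\|^{-1} ds\,dv$; the constraint $u\in B(0,T)$ becomes $s^2/\|\ta_\rho\|^2 + \|v\|^2 \leq T^2$. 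By Remark \ref{obs:direction}, $u_\rho\in\inte(\cone_\rho)\subset\inte(\frak a^+)$, so I can pick an open cone $\scr C\ni u_\rho$ with $\overline{\scr C}\subset\inte(\frak a^+)$ and localize the analysis to $u\in\scr C$; on $B(0,T)\cap(\frak a^+-\scr C)$ the angle between $u$ and $u_\rho$ is bounded below by some $\alpha>0$, so $\ta_\rho(u)\leq \|\ta_\rho\|\cdot T\cos\alpha$, and the contribution, bounded via $\Pat_\TT(f_0\cdot f_1\circ\exp(u))\leq\|f_0\|_\infty\|f_1\|_\infty\Pat_\TT(\mathrm{supp}(f_0))$, is $O(T^{\rank(G)}e^{\|\ta_\rho\|T\cos\alpha})$, vanishing after the $e^{-\|\ta_\rho\|T}$ normalization.

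On the core region $u\in\scr C$ I would substitute $v=\sqrt{s}\,v_0$ with $v_0\in\ker\ta_\rho$ and $s>0$, so that $dv = s^{(\rank(G)-1)/2}\,dv_0$. Applying Theorem \ref{teo:mixing} to $\varphi=\ta_\rho$ (for which $h_\varphi=1$ by the strict concavity of $\psi_\rho$ in Theorem \ref{teo:tangente}, and whose dual direction is $u_\rho$ by Remark \ref{obs:direction}) yields the pointwise asymptotic
\[
(2\pi s)^{(\rank(G)-1)/2}\,\Pat_\TT\!\left(f_0\cdot f_1\circ\exp(su_\rho+\sqrt{s}\,v_0)\right)\longrightarrow c\,e^{-\II(v_0)/2}\,\Pat_\TT(f_0)\,\Pat_\TT(f_1)
\]
as $s\to\infty$, and the Jacobian $s^{(\rank(G)-1)/2}$ exactly cancels the normalizing factor. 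Changing variables once more via $s=\|\ta_\rho\|T-r$, so that $e^{-\|\ta_\rho\|T}e^s=e^{-r}$ and the ball constraint becomes $\|v_0\|^2\leq 2r/\|\ta_\rho\|^2+O(r^2/T)$, and passing to the limit $T\to\infty$, the claim follows with
\[
C = \frac{c}{\|\ta_\rho\|(2\pi)^{(\rank(G)-1)/2}}\int_0^\infty e^{-r}\int_{\|v_0\|^2\leq 2r/\|\ta_\rho\|^2}e^{-\II(v_0)/2}\,dv_0\,dr > 0.
\]

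The main obstacle is upgrading the pointwise mixing into a genuine integral limit, i.e.\ justifying the exchange of $\lim_{T\to\infty}$ with the $(r,v_0)$-integral. This would be done by a dominated convergence argument: the uniform sup-norm bound on $\Pat_\TT(f_0\cdot f_1\circ\exp(u))$ combined with the $e^{-r}$ weight gives integrability in $r$, while the Gaussian-like factor $e^{-\II(v_0)/2}$ (together with the estimate from Step~1 for $u$ outside $\scr C$) controls the $v_0$-tail. One must also treat separately the small-$s$ slab, where the mixing asymptotic is not yet active; there the trivial bound shows the contribution is at most $e^{s_0}\Leb_{\frak a}(B(0,T))\|f_0\|_\infty\|f_1\|_\infty = O(T^{\rank(G)})$, negligible after multiplication by $e^{-\|\ta_\rho\|T}$.
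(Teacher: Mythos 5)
Your proposal follows essentially the same route as the paper. The change of variables is the same in spirit: the paper parametrizes $\frak a$ by $G(t,v)=t\frac{u_\rho}{\|u_\rho\|}+\sqrt t\,v$ with $v\in\ker\ta_\rho$ and Jacobian $(\sqrt t)^{\rank(G)-1}$, integrates in $t$ first over $0<t<R(T,v)=\frac12\bigl(\sqrt{\|v\|^4+4T^2}-\|v\|^2\bigr)$ and uses $R(T,v)-T\to-\|v\|^2/2$, then applies Theorem \ref{teo:mixing} pointwise and dominated convergence; your version bundles the shift by $T$ into the extra substitution $s=\|\ta_\rho\|T-r$ and then $v=\sqrt s\,v_0$, which is the same computation written in ``shifted'' coordinates. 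Both steps you emphasize — localization to a cone around $u_\rho$ and the crude $O(T^{\rank(G)}e^{-c'T})$ bound off it, the small-$s$ slab, cancellation of the $s^{(\rank G-1)/2}$ Jacobian against the $(2\pi s)^{(\rank G-1)/2}$ normalization, and the Gaussian limit — are exactly the ingredients of the paper's proof; your explicit formula for $C$ is a reparametrized form of the paper's $c\int_H e^{-(\|\ta\|\|v\|^2+\II(\sqrt{\|\ta\|}v))/2}dv$.

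One point to tighten. In the dominated-convergence step you write that ``the uniform sup-norm bound on $\Pat_\TT(f_0\cdot f_1\circ\exp(u))$'' together with the $e^{-r}$ weight gives integrability, and that ``the Gaussian-like factor $e^{-\II(v_0)/2}$ controls the $v_0$-tail.'' Neither quite supplies a dominating function: the factor $e^{-\II(v_0)/2}$ only appears in the \emph{limit}, not in a uniform bound valid for finite $T$; and the raw sup-norm bound on $\Pat_\TT(\ldots)$ is useless after multiplication by the unbounded Jacobian $s^{(\rank G-1)/2}$. What one really needs is (a) the statement — uniform in the transverse variable, for $s$ past a fixed threshold — that $(\sqrt s)^{\rank G-1}\Pat_\TT(f_0\cdot f_1\circ\exp(su_\rho+\sqrt s\,v_0))\leq K$, which the paper invokes as an output of Theorem \ref{teo:mixing}, and (b) the geometric fact that on $B(0,T)\cap\frak a^+$ the transverse variable is confined: in the paper's coordinates this is Lemma \ref{lema:alpedoN}, $R(T,v)-T\leq-\kappa\|v\|^2/2$, giving the integrable majorant $Ke^{-\kappa\|\ta\|\|v\|^2/2}$; in your coordinates it is the constraint $\|v_0\|^2\leq 2r/\|\ta_\rho\|^2+O(r^2/T)$ you already computed, but you should note that this is uniform, say $\|v_0\|^2\leq 4r/\|\ta_\rho\|^2$, only once you also restrict to $r\leq\|\ta_\rho\|T/2$ (the complementary strip being negligible by an argument like your small-$s$ one). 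Once (a) and (b) are spelled out, the DCT goes through and your constant $C$ is correct; as stated, the $v_0$-tail control is a genuine gap, even though it is easily filled by exactly the lemma the paper proves.
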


The proof of the corollary follows the exact same lines as Thirion \cite[\S 12.k]{thirion1} for Ping-Pong groups. We give a sketch of this proof for completeness.

\begin{proof} In order to simplify notation, denote by $\ta=\TT,$ $H=\ker\ta$ and $u_\rho=\UU.$ Consider the change of variables $G:\R\times H\to\frak a$ given by $$G(t,v)=t\frac{u_\rho}{\|u_\rho\|}+\sqrt t v.$$ It's Jacobian is $(\sqrt t)^{\dim H}=t^{(\rank(G)-1)/2}.$ The integral we are interested in becomes $$\int_He^{-\|\ta\|T}\int_0^\infty e^{\|\ta\|t}(\sqrt t)^{\dim H}\Pat_\ta(f_0\cdot f_1(\exp(G(t,v)))\1_{B(T)}(t,v)dtd\Leb_H(v),$$ where $B(T)=\{(t,v)\in\R\times H:G(t,v)\in \frak a^+,\,\|G(t,v)\|\leq T\},$ and $\1_A(x)$ is the characteristic function.

Recall that $H$ and $u_\rho$ are orthogonal for $\|\cdot\|$ (Remark \ref{obs:ortogonal}). The conditions $t>0$ and $\|G(t,v)\|\leq T$ imply then $$0<t<\frac12(\sqrt{\|v\|^4+4T^2}-\|v\|^2)=R(T,v)$$ Remark that $R(T,v)-T\to -\|v\|^2/2$ as $T\to\infty.$ This, together with Theorem \ref{teo:mixing}, implies that $$e^{-\|\ta\|T}\int_0^{R(T,v)}e^{\|\ta\|t}(\sqrt t)^{\dim H}\Pat_\ta(f_0\cdot f_1(\exp(G(t,v)))\1_{B(T)}(t,v)dt$$ converges to $$ce^{-(\|\ta\|\|v\|^2+\II^{|\cdot|}(\sqrt{\|\ta\|}v))/2}\Pat_\ta(f_0)\Pat_\ta(f_1)$$ as $t\to\infty,$ for a constant $c>0,$ and the euclidean norm $|\cdot|$ (remark that for every $v\in H$ there exists $t_0$ such that, for all $t\geq t_0,$ one has $G(t,v)\in\frak a^+$).

In order to apply the dominated convergence theorem, we need to find an integrable function $F:H\to\R,$ such that for every $v\in H$ one has $$e^{\|\ta\|T}\int_0^{R(T,v)}e^{\|\ta\|t}(\sqrt t)^{\dim H}\Pat_\ta(f_0\cdot f_1(\exp(G(t,v)))\1_{B(T)}(t,v)dt\leq F(v).$$ Remark that, since $\II(v)<0$ for all $v\in H,$ Theorem \ref{teo:mixing} implies that for $t$ large enough, one has $$(\sqrt t)^{\dim H}\Pat_\ta(f_0\cdot f_1(\exp(G(t,v)))\leq K,$$ for a constant $K$ independent of $v.$

Lemma \ref{lema:alpedoN} below, states that there exists a constant $\kappa>0,$ such that for all $(t,v)\in H\times\R,$ with $G(t,v)\in B(T),$ one has $R(T,v)\leq -\kappa\|v\|^2/2.$ Hence $$e^{-\|\ta\|T}\int_0^{R(T,v)}e^{\|\ta\|t}(\sqrt t)^{\dim H}\Pat_\ta(f_0\cdot f_1(\exp(G(t,v)))\1_{B(T)}(t,v)dt \leq$$ $$Ke^{-\|\ta\|T}\int_0^{R(T,v)}e^{\|\ta\|t}\1_{B(T)}(t,v)dt\leq Ke^{\|\ta\|(R(T,v)-T)}\leq K e^{-\kappa\|\ta\|\|v\|^2/2},$$ for a constant $K>0.$ This last function is clearly integrable on $H.$ This finishes the proof.

\end{proof}

\begin{lema}\label{lema:alpedoN} There exists $\kappa>0$ such that, if $(t,v)\in B(T)$ then $R(T,v)\leq-\kappa \|v\|^2/2.$
\end{lema}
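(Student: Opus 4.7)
The plan is to establish the estimate actually used in the corollary's proof, namely $R(T,v) - T \leq -\kappa\|v\|^2/2$; the key ingredient is that the growth direction $u_\rho = u_{\rho(\G)}$ lies in the interior of the Weyl chamber $\frak a^+$ (Remark \ref{obs:direction}), together with the orthogonality $v\perp u_\rho$ (Remark \ref{obs:ortogonal}).

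First, I would exploit the interior position of $u_\rho$ as follows. Since the intersection of $\frak a^+$ with the unit sphere of $\frak a$ is compact and disjoint from the hyperplane orthogonal to $u_\rho$, there exists $\epsilon \in (0,1)$ such that every $w \in \frak a^+$ satisfies $\langle w,u_\rho\rangle \geq \epsilon \|w\|\,\|u_\rho\|$. Applying this to $w = G(t,v) = tu_\rho/\|u_\rho\| + \sqrt{t}\,v$ and using $v\perp u_\rho$, which gives $\langle G(t,v),u_\rho\rangle = t\|u_\rho\|$ and $\|G(t,v)\|^2 = t^2 + t\|v\|^2$, the inequality becomes $t^2 \geq \epsilon^2(t^2 + t\|v\|^2)$, yielding a quadratic lower bound $t \geq c_1\|v\|^2$ for some $c_1 > 0$.

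Second, I would verify that for fixed $v$ the set $\{t>0 : (t,v)\in B(T)\}$ is an interval with right endpoint $R(T,v)$. The norm constraint is monotone via $\|G(t,v)\|^2 = t^2+t\|v\|^2$, while the chamber constraint is preserved under increasing $t$: writing $G(t,v) = \sqrt t(\sqrt t\,u_\rho/\|u_\rho\| + v)$, if the bracketed vector lies in $\frak a^+$ at $t_0$, then adding a nonnegative multiple of $u_\rho/\|u_\rho\|\in\frak a^+$ keeps it there. Hence the quadratic lower bound applies at $t = R(T,v)$; combined with the defining identity $R(T,v)(R(T,v)+\|v\|^2) = T^2$, this gives $c_1(1+c_1)\|v\|^4 \leq T^2$, hence $\|v\|^2 \leq K T$ for a constant $K>0$ depending only on $\epsilon$.

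Finally, I would rationalize
\[
R(T,v) - T = \frac{-2T\|v\|^2}{\sqrt{\|v\|^4 + 4T^2} + 2T + \|v\|^2}
\]
and use $\|v\|^2 \leq KT$, which gives $\sqrt{\|v\|^4+4T^2}\leq T\sqrt{K^2+4}$, to bound the denominator by $T(K + 2 + \sqrt{K^2+4})$; this yields the conclusion with $\kappa = 4/(K+2+\sqrt{K^2+4})$. The principal obstacle is the first step — converting the interior position of $u_\rho$ in $\frak a^+$ into the quadratic estimate $R(T,v) \geq c_1\|v\|^2$; the remaining manipulations are elementary algebra.
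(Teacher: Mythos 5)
Your proof is correct and follows essentially the same route as the paper's: both rest on the observation that, because $u_\rho$ lies in the interior of $\frak a^+$, the angle between any $G(t,v)\in\frak a^+$ and $u_\rho$ is bounded away from $\pi/2$, which yields the quadratic lower bound $t\geq c_1\|v\|^2$ at $t=R(T,v)$; the paper then defers the remaining algebra to Thirion, while you carry it out correctly in full (including the correct reading that the conclusion is really $R(T,v)-T\leq -\kappa\|v\|^2/2$). The one step you assert without justification is the disjointness of $\frak a^+\cap S(\frak a)$ from $u_\rho^\perp$: this is \emph{not} automatic for an arbitrary closed convex cone with $u_\rho$ interior (it fails for obtuse cones), and the paper explicitly invokes the fact that the angle between the walls of a Weyl chamber is at most $\pi/2$ — i.e.\ that $\frak a^+$ is an acute, self-dual cone — which is precisely what forces $\langle w,u_\rho\rangle>0$ for every nonzero $w\in\frak a^+$.
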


\begin{proof} Recall that the angle between the walls of $\frak a^+$ is at most $\pi/2,$ hence, since $u_\rho\in\inte\frak a^+,$ there exists $\t_0\in(0,\pi/2)$ such that if $G(t,v)\in \frak a^+,$ then the angle between $G(t,v)$ and $tu_\rho/\|u_\rho\|$ is at most $\t_0,$ i.e. $$\frac{\|\sqrt t v\|}t\leq\tan(\t_0).$$ From now on, standard computations imply the lemma, see Thirion \cite[page 184]{thirion1} for details.

\end{proof}



\section{The orbital counting problem}\label{section:bowenmargulis}

\subsubsection*{General aspects}

The standard reference for this subsection is the book by Guivarc'h-Ji-Taylor \cite{bordeF}. Recall that $G$ is a noncompact real algebraic semisimple group, and $\G$ is the fundamental group of a closed connected negatively curved Riemannian manifold. 

Recall we have denoted by $a:G\to\frak a^+$ the Cartan projection of $G.$ We will define a new projection $\aa:X\times X\to\frak a^+$ by $\aa(g\cdot o,h\cdot o)=a(g^{-1}h).$ Remark that $\aa$ is $G$-invariant, that $\|\aa(p,q)\|=d_X(p,q),$ and that \begin{equation} \label{equation:iii} \ii(\aa(p,q))=\aa(q,p). \end{equation} By definition, one has $q\in K_p \exp(\aa(p,q))\cdot p,$ where $K_p$ is the stabilizer in $G$ of $p.$ 

\begin{obs}\label{obs:asimpt} Remark that there exists $K\in\R_+,$ such that for every $g\in G$ one has $\|\aa(p,g q)-a(g)\|\leq K.$ 
\end{obs}





Similarly (and abusing notation), we will define the \emph{Busemann cocycle} $\buss:\scr F\times X\times X\to\frak a$ by $$\buss(x,g\cdot o,h\cdot o)=\buss_x(g\cdot o,h\cdot o)=\bus(g^{-1},x)-\bus(h^{-1},x).$$ 

A \emph{parametrized flat} is a map $\p:\frak a\to X,$ defined by $\p(v)=g\exp(v)\cdot o,$ for some $g\in G.$ Observe that $G$ acts transitively on parametrized flats, and that the stabilizer of $\p_0:v\mapsto \exp(v)\cdot o$ is the group $M$ of elements in $K$ commuting with $\exp(\frak a).$ We will hence identify the space of parametrized flats with $G/M.$

A \emph{maximal flat} is the image on $X$ of a parametrized flat i.e. the maximal flat associated to $\p,$ is defined by $\mm\p=\p(\frak a)=\{g\exp(v)\cdot o:v\in\frak a\}.$ The space of maximal flats is naturally identified with $G/MA=\posgen$ (recall Hopf's parametrization of $G$ on the Introduction). Denote by $(\wk\zz,\zz):G/M\to\posgen=G/MA$ the canonical projection.

The following proposition is standard.

\begin{prop}[see {\cite[Chapter III]{bordeF}}]\item\begin{enumerate}\item Let $\p,{\sf g}$ be two parametrized flats, then the function $\frak a\to\R,$ defined by $$v\mapsto d_X(\p(v),{\sf g}(v)),$$ is bounded on the Weyl chamber $\frak a^+,$ if and only if $\zz(\p)=\zz({\sf g}).$\item A pair $(p,x)\in X\times\scr F$ determines a unique parametrized flat $\p,$ such that $\p(0)=p$ and $\zz(\p)=x.$ \item A point $(x,y)\in\posgen$ determines a unique maximal flat $\mm\p$ such that $\wk\zz(\p)=x$ and $\zz(\p)=y.$ 
\end{enumerate}
\end{prop}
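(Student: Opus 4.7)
All three statements flow from the standard homogeneous-space dictionary between parametrized flats ($G/M$), maximal flats ($G/MA$), and Furstenberg points ($\scr F=G/P$), via Iwasawa's decomposition $G=K\exp(\frak a)N$ and the identity $P=MAN$.

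For (2), I would observe that the orbit map $g\mapsto(g\cdot o,g[P])$ induces a $G$-equivariant bijection $G/M\to X\times\scr F$: surjectivity follows from $G$-transitivity on $X$ combined with $K$-transitivity on $\scr F=K/M$, while the stabilizer of $(o,[P])$ is $K\cap P=M$. Since by definition $\p(0)=g\cdot o$ and $\zz(\p_g)=g[P]$, the unique coset $gM$ mapping to $(p,x)$ supplies the parametrized flat. For (3), the analogous orbit map $g\mapsto(g[\vo P],g[P])\in\posgen$ factors through a $G$-equivariant bijection $G/MA\leftrightarrow\posgen$, and the maximal flat $\mm{\p_g}=g\exp(\frak a)\cdot o$ depends only on the class $gMA$, yielding both existence and uniqueness.

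For (1), write $\p(v)=g\exp(v)\cdot o$ and ${\sf g}(v)=h\exp(v)\cdot o$, so that $\zz(\p)=\zz({\sf g})$ is equivalent to $g^{-1}h\in P$. Decomposing $g^{-1}h=m\exp(w)n$ with $m\in M$, $w\in\frak a$, $n\in N$, and using that $M$ centralizes $\exp(\frak a)$,
$$d_X(\p(v),{\sf g}(v))=d_X\bigl(o,\,m\exp(w)(\exp(-v)n\exp(v))\cdot o\bigr),$$
which stays bounded on $\frak a^+$ because $\exp(-v)n\exp(v)$ lies in a compact subset of $N$ (and in fact converges to $e$ as $v\to\infty$ in $\inte\frak a^+$, by contraction of the unipotent radical under positive Weyl chamber elements). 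Conversely, if $g^{-1}h\notin P$, then the Iwasawa/Bruhat decomposition forces a nontrivial $K$-component outside $M$; conjugating by $\exp(v)$ for $v\in\inte\frak a^+$ produces an element whose Cartan projection $a(\exp(-v)g^{-1}h\exp(v))$ grows linearly in some direction transverse to the flow. Remark \ref{obs:asimpt} then translates this into $d_X(\p(v),{\sf g}(v))\to\infty$.

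The one delicate point is the divergence in the converse of (1): it requires quantitative control of the Cartan projection under conjugation by $\exp(\frak a^+)$. Everything else is formal bookkeeping with the homogeneous-space identifications recalled at the outset.
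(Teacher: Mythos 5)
The paper does not prove this proposition; it cites it as standard from Guivarc'h--Ji--Taylor, so there is no in-paper argument to compare with, and I can only assess your proof on its own terms. Your treatment of (2) and (3), via the $G$-equivariant identifications $G/M\cong X\times\scr F$ (stabilizer of $(o,[P])$ is $K\cap P=M$) and $G/MA\cong\posgen$ (stabilizer of $([\vo P],[P])$ is $P\cap\vo P=MA$), is correct, as is the forward direction of (1): $M$ centralizes $\exp(\frak a)$, and $\Ad(\exp(-v))$ contracts each $\frak g_\a$ with $\a\in\E^+$ by the factor $e^{-\a(v)}\leq1$ for $v\in\frak a^+$, so $\exp(-v)n\exp(v)$ stays in a compact subset of $N$.

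The converse of (1) is the part you flag as delicate, and as written it is a gesture, not an argument: ``grows linearly in some direction transverse to the flow'' does not establish divergence, and the appeal to Remark~\ref{obs:asimpt} is misplaced, since in fact $d_X(\p(v),{\sf g}(v))=\|a(\exp(-v)g^{-1}h\exp(v))\|$ holds \emph{exactly}. Here is how to close the gap. Write $g^{-1}h=k\exp(w)n$ by Iwasawa, so $g^{-1}h\in P$ iff $k\in M$. Since $\exp(w)$ and $\exp(-v)n\exp(v)$ lie in a fixed compact set for $v\in\frak a^+$, boundedness of the distance forces boundedness of $\{\exp(-v)k\exp(v):v\in\frak a^+\}$. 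Fix $u\in\inte(\frak a^+)$ and let $t\to\infty$. The $(\frak g_\a,\frak g_\beta)$-block of $\Ad(\exp(-tu)k\exp(tu))$ equals $e^{t(\beta(u)-\a(u))}$ times the corresponding block of $\Ad(k)$, so boundedness forces $\Ad(k)\frak g_\beta\subset\bigoplus_{\a(u)\geq\beta(u)}\frak g_\a$ for every $\beta$. Since $u$ is regular, $\bigoplus_{\beta(u)\geq0}\frak g_\beta=\frak m\oplus\frak a\oplus\frak n=\lie(P)$, and the above containments give $\Ad(k)(\lie P)\subset\lie P$. Parabolic subgroups are self-normalizing, so $k\in P\cap K=M$, hence $g^{-1}h\in P$, which is the desired converse.
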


The usual relation between the Cartan projection and Busemann's cocycle, is given the following lemma of Quint \cite{quint1}. Observe that if $p,q\in X$ are such that $\aa(p,q)\in\inte(\frak a^+),$ then there is a unique parametrized flat $\p_{pq}$ such that $\p_{pq}(0)=p,$ and $\p_{pq}(\aa(p,q))=q.$ Denote by $x_{pq}=\zz(\p_{pq}),$ and recall that $\Pi$ is the set of simple roots of $G.$

\begin{lema}[{Quint \cite[Lemma 6.6]{quint1}}]\label{lema:bus} Fix $p,q\in X,$ then $$\aa(p,z)- \aa(q,z) -\buss_{x_{pz}}(p,q)\to0,$$ as $\min_{\a\in\Pi}\a(\aa(p,z))\to\infty.$
\end{lema}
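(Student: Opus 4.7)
By $G$-equivariance we may assume $p = o = [K]$, and write $q = g\cdot o$, $z = h\cdot o$. My plan is to compare the Cartan and Iwasawa decompositions of $g^{-1}h$. Use Cartan on $h$ to write $h = k\exp(a(h))k'$ with $k,k'\in K$; since eventually $a(h)$ lies in the interior of $\frak a^+$, the element $k$ is determined modulo $M$, and the parametrized flat $v\mapsto k\exp(v)\cdot o$ joins $p$ to $z$, so $x_{pz} = kP$. Now Iwasawa-decompose $g^{-1}k = l\exp(\bus(g^{-1},kM))n$ with $l\in K$, $n\in N$. Then
$$g^{-1}h = l\,\exp\!\bigl(a(h)+\bus(g^{-1},kM)\bigr)\,n'\,k',\qquad n' := \exp(-a(h))\,n\,\exp(a(h)).$$
As $k$ varies in $K$ the element $n$ varies continuously in a compact subset of $N$; and since $\Ad(\exp(-a(h)))$ acts on $\frak g_\alpha$ by $e^{-\alpha(a(h))}$ for $\alpha\in\Sigma^+$, one has $n'\to e$ uniformly in $k$ when $\min_{\alpha\in\Pi}\alpha(a(h))\to\infty$.

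Set $v := a(h)+\bus(g^{-1},kM)$, a vector that remains in $\frak a^+$ and satisfies $\min_{\alpha\in\Pi}\alpha(v)\to\infty$ (the Busemann term is bounded uniformly in $k\in K$). The heart of the argument is to show $a(g^{-1}h)-v\to 0$, and for this I would invoke Tits's representations (Proposition~\ref{prop:titss}) together with Quint's Lemma~\ref{lema:busemanna}, which tells us $\chi_\alpha\circ a = \log\|\Lambda_\alpha(\cdot)\|_\alpha$ for each $\alpha\in\Pi$. Since the fundamental weights $\{\chi_\alpha|_{\frak a}\}_{\alpha\in\Pi}$ form a basis of $\frak a^*$, it suffices to check $\chi_\alpha(a(g^{-1}h)) - \chi_\alpha(v)\to 0$ for each $\alpha$.

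Fix $\alpha$ and let $e_\alpha\in V_\alpha$ be a unit highest weight vector. Using that $\Lambda_\alpha(K)$ is orthogonal, $\chi_\alpha(a(g^{-1}h)) = \log\|\Lambda_\alpha(\exp(v))\Lambda_\alpha(n')\|_\alpha$. The upper bound is immediate from submultiplicativity: $\|\Lambda_\alpha(\exp(v))\Lambda_\alpha(n')\|_\alpha \le e^{\chi_\alpha(v)}\|\Lambda_\alpha(n')\|_\alpha = e^{\chi_\alpha(v)}(1+o(1))$. For the lower bound, $\Lambda_\alpha(n')e_\alpha = e_\alpha + \varepsilon$ with $\varepsilon$ living in the sum of weight spaces for weights $\chi_\alpha - \alpha - \sum n_\beta\beta$ (by Proposition~\ref{prop:titss}) and $\|\varepsilon\|_\alpha\to 0$; therefore
$$\Lambda_\alpha(\exp(v))\Lambda_\alpha(n')e_\alpha = e^{\chi_\alpha(v)}e_\alpha + \Lambda_\alpha(\exp(v))\varepsilon,$$
and the last term has norm at most $e^{(\chi_\alpha-\alpha)(v)}\|\varepsilon\|_\alpha = e^{\chi_\alpha(v)}\cdot e^{-\alpha(v)}\|\varepsilon\|_\alpha = o(e^{\chi_\alpha(v)})$ because $\alpha(v)\to\infty$. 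Hence $\log\|\Lambda_\alpha(\exp(v))\Lambda_\alpha(n')\|_\alpha = \chi_\alpha(v)+o(1)$, completing the key estimate.

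Putting everything together, $a(g^{-1}h) - \bigl(a(h)+\bus(g^{-1},x_{pz})\bigr)\to 0$, and since $\buss_{x_{pz}}(p,q) = \bus(e,x_{pz}) - \bus(g^{-1},x_{pz}) = -\bus(g^{-1},x_{pz})$, we conclude
$$\aa(p,z) - \aa(q,z) - \buss_{x_{pz}}(p,q) = a(h) - a(g^{-1}h) + \bus(g^{-1},x_{pz}) \longrightarrow 0.$$
I expect the delicate point to be the spectral estimate for $\Lambda_\alpha(\exp(v))\Lambda_\alpha(n')$: one needs the contraction rate $e^{-\alpha(v)}$ of subdominant weights to dominate the decay rate of $\|\varepsilon\|_\alpha$, but this works because $n'$ decays at rate at least $e^{-\min_\beta\beta(a(h))}$ while the spectral gap is of the same order, so uniformity in $k\in K$ follows from the compactness of the Iwasawa data.
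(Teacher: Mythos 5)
Your argument is correct and uses the natural technique: comparing the Cartan decomposition of $z$ with the Iwasawa decomposition of $g^{-1}k$, and then controlling $a(g^{-1}h)$ through Tits's proximal representations (Proposition~\ref{prop:titss}) and Quint's formula (Lemma~\ref{lema:busemanna}). The paper itself only cites this lemma from Quint without reproducing the proof, so there is no in-text argument to compare against, but your route is the standard one and presumably the one Quint follows.

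One small simplification you overlook, which removes the ``delicate point'' you flag at the end: the conjugate $n' = \exp(-a(h))\,n\,\exp(a(h))$ lies in $N$ (because $A$ normalizes $N$), and the highest-weight vector $e_\alpha$ is annihilated by every positive root operator of $V_\alpha$, so $\Lambda_\alpha(n')e_\alpha = e_\alpha$ \emph{exactly}, for every $n'\in N$. Your $\varepsilon$ is therefore identically zero, not just decaying. The lower bound is then immediate: $\|\Lambda_\alpha(\exp v)\Lambda_\alpha(n')\|_\alpha \geq \|\Lambda_\alpha(\exp v)\Lambda_\alpha(n')e_\alpha\|_\alpha = e^{\chi_\alpha(v)}$, with no competition between the contraction of $n'$ and the spectral gap. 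The convergence $n'\to e$ (uniformly in $k\in K$) is only needed for the \emph{upper} bound $\|\Lambda_\alpha(n')\|_\alpha\to 1$, and there your argument is sound.
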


Given $r>0,$ define the \emph{shadow (on $\scr F$) of $q$ seen from $p$ of size $r,$} by $$\cal O_r(p,q)=\{\zz(\p):\p\in G/M,\ \p(0)=p,\ \exists v\in\inte(\frak a^+),\;d_X(\p(v),q)<r\}.$$ Denote by $B_X(p,r),$ the ball on $X$ of radius $r$ centered at $p,$ and define by $$\cal O^+_r(p,q)=\bigcup_{p_0\in B_X(p,r)}\cal O(p_0,q),$$ and $$\cal O^-_r(p,q)=\bigcap_{p_0\in B_X(p,r)}\cal O(p_0,q).$$ Finally, for $x\in\scr F$ define the \emph{shadow of $q$ seen from $x$ of size $r,$} by $$\cal O_r(x,q)=\{\zz(\p):\p\in G/M,\ d_X(\p(0),q)<r,\ \wk\zz( \p)=x\}.$$

\begin{lema}[{Thirion \cite[Proposition 8.66]{thirion1}}]\label{lema:buseman-cartan} Given $r,\eps>0$ there exists $R>0,$ such that if $d_X(p,q)\geq R,$ and $x\in \cal O_r^+(p,q),$ then $$\|\buss_x(p,q)-\aa(p,q)\|\leq\eps.$$
\end{lema}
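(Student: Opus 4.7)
The plan is to introduce the boundary direction $x_{pq}\in\scr F$ of the unique parametrized flat $\p_{pq}$ through $p$ and $q$ and split
\[
\buss_x(p,q)-\aa(p,q)=\bigl[\buss_x(p,q)-\buss_{x_{pq}}(p,q)\bigr]+\bigl[\buss_{x_{pq}}(p,q)-\aa(p,q)\bigr],
\]
aiming to show that the second bracket vanishes exactly for $R$ large, and the first tends to $0$ as $d_X(p,q)\to\infty$ uniformly for $x\in\cal O^+_r(p,q)$.

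First I would unfold the definition: there are $p_0\in B_X(p,r)$, a parametrized flat $\p$ with $\p(0)=p_0$ and $\zz(\p)=x$, and some $v\in\inte\frak a^+$ with $d_X(\p(v),q)<r$ and $\|v\|\geq d_X(p,q)-2r$. By continuity of the Cartan projection under bounded perturbations of the endpoints, $\aa(p,q)$ stays at bounded distance (depending only on $r$) from $v$, hence lies deep in $\inte\frak a^+$ once $R$ is large enough. In particular $\p_{pq}$ is well defined and $x_{pq}=\zz(\p_{pq})$ makes sense.

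Next I would apply Quint's Lemma \ref{lema:bus} to the triple $(p,q,z_t)$ with $z_t=\p_{pq}(t\aa(p,q))$ and $t\to\infty$. Because $p$, $q$ and $z_t$ all lie on $\p_{pq}$ in the positive Weyl chamber, one has \emph{exactly} $\aa(p,z_t)=t\aa(p,q)$, $\aa(q,z_t)=(t-1)\aa(p,q)$ and $x_{pz_t}=x_{pq}$ for every $t>1$. The quantity
\[
\aa(p,z_t)-\aa(q,z_t)-\buss_{x_{pz_t}}(p,q)=\aa(p,q)-\buss_{x_{pq}}(p,q)
\]
is therefore independent of $t$, while Lemma \ref{lema:bus} forces it to tend to $0$; it must equal $0$, giving the exact identity $\aa(p,q)=\buss_{x_{pq}}(p,q)$ and killing the second bracket.

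For the first bracket, both $x$ and $x_{pq}$ are boundaries at infinity of parametrized flats emanating from points within $r$ of $p$ and passing within $r$ of $q$, so their flag-space displacement is of order $r/d_X(p,q)$. A standard ``cosine law'' computation, which in rank one reads $\buss_x(p,q)=d_X(p,q)\cos\theta$ with $\theta$ the angle at $p$ between the rays toward $x$ and $q$, then yields a second-order bound of the form $\|\buss_x(p,q)-\buss_{x_{pq}}(p,q)\|=O(r^2/d_X(p,q))$. In higher rank one proves this by writing the Busemann cocycle via the Tits representations (Lemma \ref{lema:busemanna}) and evaluating the difference as a log-ratio of norms $\|\Lambda_\alpha(g)v\|/\|\Lambda_\alpha(g)v'\|$ for $v\in\xi_\alpha(x)$, $v'\in\xi_\alpha(x_{pq})$, exploiting the asymptotic attraction of both lines to the same image direction under $\Lambda_\alpha(g_p^{-1})$ and $\Lambda_\alpha(g_q^{-1})$. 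The main obstacle is this last estimate: a naive Lipschitz bound on $y\mapsto\buss_y(p,q)$ gives only $O(r)$, and the required second-order improvement really uses the shadow hypothesis on $x$ (not merely proximity in $\scr F$).
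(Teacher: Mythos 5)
The paper does not prove this lemma---it is quoted from Thirion's thesis with no argument supplied---so there is no in-paper proof to compare against; I evaluate your sketch on its own terms. The plan of splitting off the exact identity $\buss_{x_{pq}}(p,q)=\aa(p,q)$ and then controlling the variation of the Busemann cocycle as $x$ moves from $x_{pq}$ to a point of the shadow is a natural strategy, and your limit argument for the identity (take $z_t=\p_{pq}(t\,\aa(p,q))$ in Quint's Lemma~\ref{lema:bus} and note that the left-hand side is constant in $t$) is correct. It is, however, roundabout: writing $q=g_pk_0\exp(\aa(p,q))\cdot o$ so that $x_{pq}=g_pk_0M$, one has $g_p^{-1}(g_pk_0)=k_0\in K$ and $g_q^{-1}(g_pk_0)=\exp(-\aa(p,q))$, so $\bus(g_p^{-1},x_{pq})=0$ and $\bus(g_q^{-1},x_{pq})=-\aa(p,q)$ by reading off the Iwasawa factors, and the identity drops out in one line without any limit or appeal to Lemma~\ref{lema:bus}.

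The real work is in the first bracket, and there the proof has two genuine gaps. First, your claim that ``$\aa(p,q)$ lies deep in $\inte\frak a^+$ once $R$ is large enough'' does not follow from the shadow hypothesis: you only get some $v\in\inte\frak a^+$ within $O(r)$ of $\aa(p,q)$ with $\|v\|\geq R-2r$, but $v$ may lie arbitrarily close to a wall even when $\|v\|$ is large, so $\aa(p,q)$ may well be singular and $x_{pq}$ then fails to be uniquely defined; your $z_t$ argument also collapses in that case since $\min_{\a\in\Pi}\a(\aa(p,z_t))$ stays bounded. Second, and more importantly, the bound $\|\buss_x(p,q)-\buss_{x_{pq}}(p,q)\|=O(r^2/d_X(p,q))$ is the entire content of the lemma and is asserted rather than proved. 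You correctly diagnose that a Lipschitz estimate on $x\mapsto\buss_x(p,q)$ only yields $O(r)$ and that a genuine second-order cancellation is needed; the Euclidean ``cosine law'' computation $|q-p|(\cos\theta-1)\approx -\tfrac12|q-p|\theta^2$ with $\theta=O(r/d_X(p,q))$ is the right heuristic, but the higher-rank version---via the log-ratios of norms in the Tits representations of Lemma~\ref{lema:busemanna}, or via a horospherical-coordinate computation as in Thirion---is only gestured at. As written, the argument controls $\buss_x(p,q)-\aa(p,q)$ only up to a constant of order $r$, which is not the statement of the lemma.
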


Let $\grupo$ be a Zariski-dense discrete subgroup of $G,$ and consider a linear form $\varphi\in\frak a^*,$ tangent to $\psi_\grupo$ on a direction in the interior of $\cone_\grupo.$ Denote by $\nu_\varphi$ the $\varphi$-Patterson-Sullivan measure (recall Quint's Theorem \ref{teo:JF-PS}). Define the $\varphi$-\emph{Patterson-Sullivan density} $\{\mu_p\}_{p\in X}$ by $\mu_o=\nu_\varphi$ and $$\frac{d\dens_p}{d\dens_o}(x)=e^{-\varphi(\buss_x(p,o))}.$$

Since $\scr F$ is $K_p$-homogeneous and $K_p$ is compact, there is a unique $K_p$-invariant probability measure on $\scr F.$ This gives an embedding of $X$ on the space of probability measures $\cal M(\scr F).$ The closure of this embedding, denoted by $\F,$ is called \emph{the Furstenberg compactification of $X$.} Observe that if $v\in\inte(\frak a^+)$ and $k\in K,$  then $$ke^{tv}\cdot o\to \delta_{kM},$$ as $t\to\infty,$ on $\cal M(\scr F),$ where $\delta_{kM}$ is the Dirac delta on $kM.$

A pair $(p,x)\in X\times\scr F$ is \emph{in good position} (w.r.t. $\grupo$), if the parametrized flat determined by $p$ and $x$ verifies $\wk\zz(\p)\in\Li_\grupo.$ Given $a,b\in\R$ and $\eps>0,$ we will say that $a\simbolo\eps b$ if $e^{-\eps}a\leq b\leq e^\eps a.$

\begin{lema}[{Thirion \cite[Lemma 10.7]{thirion1}}]\label{lema:sombras} Fix a pair in good position $(p,x)\in X\times\scr F.$ Then for every $\eps,r>0$ there exists $0<r_1<r,$ and a neighborhood $V_x$ of $x$ on $\F,$ such that for all $z\in V_x$ one has $$\dens_p(\cal O_{r_1}(z,p))\simbolo\eps\dens_p(\cal O_{r_1}(x,p)).$$ 
\end{lema}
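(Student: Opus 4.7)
The plan is to establish the multiplicative estimate by sandwiching both shadows between shadows of slightly different radii sharing a common first argument $x$, and then bounding the $\dens_p$-mass of the resulting thin annuli. Two ingredients are needed: a geometric continuity statement describing how $\cal O_{r_1}(z,p)$ varies with the observer $z$ in the topology of $\F$, and a comparison (in the spirit of the shadow lemma) bounding $\dens_p$ of thin shadow annuli around the good-position point $x$. The role of the constraint $0<r_1<r$ is precisely to provide freedom to pick a radius $r_1$ at which the comparison has no jump.

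For the geometric step, I would split the approach to $z\to x$ in $\F$ into two cases. If $z\in\scr F$ tends to $x$ in the flag topology, then by the correspondence between pairs $(p,z)\in X\times\scr F$ and parametrized flats, the flat $\p_z$ with $\p_z(0)=p$ and $\wk\zz(\p_z)=z$ converges to $\p_x$ uniformly on compact subsets of $\frak a$ through continuity of the Iwasawa decomposition, so the set of terminal boundary points $\zz(\p)$ with $d_X(\p(0),p)<r_1$ and $\wk\zz(\p)=z$ moves continuously with $z$. If $z=q_n\in X$ with $q_n\to x$ in $\F$, write $q_n$ in $KAK$-form: the $K$-component converges to a representative of $x\in\scr F=K/M$, the $A$-component lies eventually in $\exp(\inte(\frak a^+))$ and diverges to infinity, so the flat $\p_{p,q_n}$ from $p$ through $q_n$ accumulates, uniformly on bounded subsets of $\frak a$, onto the flat from $p$ toward $x$. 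In both cases, for any $\delta>0$, shrinking the neighborhood $V_x$ we obtain the two-sided inclusion
\begin{equation*}
\cal O_{r_1-\delta}(x,p)\subset\cal O_{r_1}(z,p)\subset\cal O_{r_1+\delta}(x,p).
\end{equation*}

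For the measure step, use the defining relation $d\dens_p/d\dens_o(y)=e^{-\varphi(\buss_y(p,o))}$ of the Patterson-Sullivan density. Lemma \ref{lema:buseman-cartan} shows that on a shadow $\cal O_\rho(x,p)$, once $d_X(o,p)$ is large enough relative to $\rho$, the Busemann cocycle $\buss_y(p,o)$ depends on $y$ only up to $\eps/4$; hence the Radon--Nikodym density is essentially a constant there, and the estimate reduces to comparing $\dens_o$-masses of $\cal O_{r_1\pm\delta}(x,p)$. Since $(p,x)$ is in good position, the flat through $p$ directed by $x$ has opposite endpoint in $\Li_\grupo$, which forces $\dens_o(\cal O_{r_1}(x,p))>0$. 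The function $\rho\mapsto \dens_o(\cal O_\rho(x,p))$ is monotone, hence admits continuity points densely in $(0,r)$; picking $r_1$ at (or near) such a point guarantees that a suitable $\delta$ makes the two nested shadows multiplicatively $e^{\eps/2}$-close in $\dens_o$-mass, and by the previous paragraph in $\dens_p$-mass as well. Combining with the sandwich yields the lemma.

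The main obstacle is the geometric continuity in the mixed case where $z\in X$ approaches $x\in\scr F\subset\F$, since the definition of $\cal O_{r_1}(z,p)$ then differs formally from that of $\cal O_{r_1}(x,p)$ (one uses flats based at $z$ passing near $p$, the other flats based near $p$ with opposite endpoint $x$). The reconciliation is the $KAK$ description of convergence in the Furstenberg compactification: up to a bounded correction, the flat from $q_n$ through $p$ traced backwards realizes, in the limit, the flat from $x$ through $p$. Once this is in place, the Patterson-Sullivan comparison is a direct application of the density formula, Lemma \ref{lema:buseman-cartan}, and a choice of $r_1$ avoiding jumps of $\rho\mapsto\dens_o(\cal O_\rho(x,p))$.
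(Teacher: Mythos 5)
The paper does not prove this lemma: it is cited directly from Thirion's thesis \cite[Lemma 10.7]{thirion1}, so there is no in-paper argument to compare against. Your proposal can therefore only be judged on its own terms, and its overall architecture is reasonable: sandwich $\cal O_{r_1}(z,p)$ between $\cal O_{r_1-\delta}(x,p)$ and $\cal O_{r_1+\delta}(x,p)$ for $z$ in a small neighborhood $V_x$, then use monotonicity of $\rho\mapsto\dens_p(\cal O_\rho(x,p))$ and the freedom to pick $r_1\in(0,r)$ at a continuity point to conclude. Positivity of $\dens_p(\cal O_{r_1}(x,p))$ from the good-position hypothesis is also correctly invoked.

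However, the measure step as you wrote it contains a genuine error. You write ``once $d_X(o,p)$ is large enough relative to $\rho$,'' but $o$ and $p$ are both fixed in the statement; there is no parameter here you are free to send to infinity. Moreover Lemma \ref{lema:buseman-cartan} controls $\buss_x(p,q)$ for $x$ in the shadow $\cal O_r^+(p,q)$ of a \emph{point} $q$ seen from $p$ as $d_X(p,q)\to\infty$; you are applying it to the shadow $\cal O_\rho(x,p)$ of $p$ seen from a \emph{flag} $x$, which is a different geometric configuration and to which the lemma as stated does not speak. The good news is that the whole detour through $\dens_o$ is unnecessary and should simply be removed: once you have the sandwich, apply monotonicity and density of continuity points directly to $\rho\mapsto\dens_p(\cal O_\rho(x,p))$. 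This gives $r_1$ and $\delta$ with $\dens_p(\cal O_{r_1+\delta}(x,p))\leq e^{\eps}\dens_p(\cal O_{r_1-\delta}(x,p))$, and the conclusion follows for both $z\in V_x\cap X$ and the limiting flag. Excising the faulty step leaves a correct proof sketch, though the geometric sandwich (in particular uniformity over the compact ball $\cl B_X(p,r_1)$ in the $KAK$ limit) still deserves to be spelled out.
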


If $p=g\cdot o\in X,$ define the $\varphi$-\emph{Gromov product} (or simply \emph{Gromov product}) \emph{based on} $p,$ ${\Gromov\cdot\cdot}_p={\Gromov\cdot\cdot}_p^\varphi:\posgen\to\R,$ by $${\Gromov xy}_{g\cdot o}=\varphi(\Gr_\Pi(g^{-1}x,g^{-1}y)).$$ 

\begin{obs}\label{obs:gromov0} Remark that ${\Gromov\cdot\cdot}_p$ is continuous, and that if $p$ belongs to the maximal flat determined by $(x,y)\in\posgen,$ then $[x,y]_p =0.$ 
\end{obs}

Denote by $\{\vo\dens_p\}_{p\in X}$ the $\varphi\circ\ii$-Patterson-Sullivan density of $\grupo.$ The following remark follows from the definitions.

\begin{obs}\label{obs:punto} The measure $e^{{-\Gromov\cdot\cdot}_p} \vo\dens_p\otimes \dens_p\otimes\Leb_{\frak a}$ is independent of $p.$ This is the $\varphi$-\emph{Bowen-Margulis measure} of $\grupo,$ and is denoted by $\w\chi_\varphi.$
\end{obs}


\subsection*{The main theorem}

This section is completely devoted to the proof of the following theorem. The method is that of Roblin \cite{roblin} which was also adapted to some higher rank situations, by Thirion \cite{thirion1}. 

If $\rho:\G\to G$ is a Zariski-dense hyperconvex representation, denote by $\ta=\ta_\rho=\ta_{\rho(\Gamma)}$ its growth form. Recall that $\|\ta\|=h_{\rho(\G)},$ and that $\ta$ is $\ii$-invariant, and tangent to the growth indicator of $\rho(\G).$ Denote by $\{\mu_p\}_{p\in X}$ the $\ta$-Patterson-Sullivan density of $\rho(\G).$ 

\begin{teo}\label{teo:quiensabe}Let $\rho:\G\to G$ be a Zariski-dense hyperconvex representation, and consider $p,q\in X,$ then $$e^{-\|\ta\|T}\sum_{\g\in\G: d_X(p,\rho(\g) q)\leq T}\delta_{\rho(\g) q}\otimes\delta_{\rho(\g^{-1}) p}\to c\mu_p\otimes\mu_q,$$ for the weak-star convergence on $C(\F\times\F),$ as $T\to\infty,$ for a constant $c=c(p,q)>0.$
\end{teo}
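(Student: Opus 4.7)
The plan is to adapt the Roblin \cite{roblin}--Thirion \cite{thirion1} strategy, with the mixing statement Corollary \ref{cor:mixingroblin} as the key analytic input. By weak-star density of product functions in $C(\F\times\F)$, it suffices to establish the convergence against each product $\psi_0\otimes\psi_1$, where $\psi_i\in C(\F)$ is a bump supported in a small neighborhood $V_i$ of a point $x_i\in\z(\bord\G)$ of the limit set, viewed inside $\scr F\subset\F$; the candidate limit $\mu_p\otimes\mu_q$ is in any case supported on such a product.

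For these test functions I would construct two compactly supported bump functions $f_0,f_1$ on $\rho(\G)\backslash G/M=\rho(\G)\backslash(\scr F^{(2)}\times\frak a)$, where $f_0$ is a smooth flow box based at $p$ whose Hopf lift consists of parametrized flats passing close to $p$ with forward endpoint in $V_0$ and backward endpoint in a complementary dual neighborhood, and $f_1$ is the analogous object based at $q$ with forward endpoint in $V_1$. Combining Lemma \ref{lema:sombras} (shadow estimates for the Patterson-Sullivan density), Lemma \ref{lema:buseman-cartan} together with Remark \ref{obs:asimpt} (identification of the Busemann cocycle with the Cartan projection for directions asymptotic to the interior of $\frak a^+$), and Remark \ref{obs:gromov0} (vanishing of the Gromov product at a basepoint on the flat), one shows that $\Pat_\ta(f_0\cdot f_1\circ\exp u)$ is, up to small errors and a fixed constant depending only on the bump sizes, proportional to $\sum_\gamma\psi_0(\rho(\gamma)q)\psi_1(\rho(\gamma^{-1})p)e^{-\ta(\aa(p,\rho(\gamma)q))}$, the sum running over those $\gamma\in\G$ whose Cartan projection $\aa(p,\rho(\gamma)q)$ lies in a prescribed small neighborhood of $u$.

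Integrating this approximate identity against $e^{\ta(u)}d\Leb_{\frak a}(u)$ over $B(0,T)\cap\frak a^+$ and multiplying by $e^{-\|\ta\|T}$, the exponential weight $e^{\ta(u)}$ cancels $e^{-\ta(\aa(p,\rho(\gamma)q))}$ and the constraint $\|u\|\leq T$ becomes $d_X(p,\rho(\gamma)q)\leq T$, so the right-hand side becomes a positive multiple of $e^{-\|\ta\|T}\sum_{\gamma:d_X(p,\rho(\gamma)q)\leq T}\psi_0(\rho(\gamma)q)\psi_1(\rho(\gamma^{-1})p)$; meanwhile the left-hand side converges, by Corollary \ref{cor:mixingroblin}, to $C\Pat_\ta(f_0)\Pat_\ta(f_1)$, and the Bowen-Margulis disintegration (Remark \ref{obs:punto}) expresses this product as a positive multiple of $\mu_p(V_0)\mu_q(V_1)$, with the bump-size normalization factors matching on the two sides. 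The main obstacle is a uniform control of the errors. First, the Busemann-Cartan comparison of Lemma \ref{lema:buseman-cartan} degenerates when $\aa(p,\rho(\gamma)q)$ approaches a wall of $\frak a^+$; but by Remark \ref{obs:direction} the growth direction $\R_+\UU$ lies in the interior of the limit cone, so Remark \ref{obs:fuera} shows that these wall contributions have strictly smaller exponential order and can be discarded. Second, to pass from the box-averaged identity to a sharp pointwise count one runs the classical Roblin sandwich argument, bracketing from above and below with enlarged and shrunken shadows, and checking that the two resulting constants coincide in the limit as the bump parameter shrinks to zero; the strict concavity of $\psi_{\rho(\G)}$ (Theorem \ref{teo:tangente}) is what guarantees this agreement of constants.
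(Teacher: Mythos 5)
Your proposal is essentially the paper's own Roblin--Thirion argument: flow boxes $f_0,f_1$ based at $p$ and $q$, conversion of $\Pat_\TT(f_0\cdot f_1\circ\exp u)$ into a weighted orbital sum via Lemma \ref{lema:sombras}, Lemma \ref{lema:buseman-cartan} and Remark \ref{obs:gromov0}, mixing through Corollary \ref{cor:mixingroblin}, and a sandwich to close. Two caveats worth noting. First, you attribute the coincidence of the upper and lower bracketing constants to the strict concavity of $\psi_{\rho(\G)}$; in the paper that agreement actually comes from the shadow continuity supplied by Lemma \ref{lema:sombras}, while strict concavity (through Remark \ref{obs:direction} and Remark \ref{obs:fuera}) serves to discard the orbit points whose Cartan projections stray from the growth direction, which you do invoke correctly in the paragraph about wall degeneracies. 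Second, your flow-box construction tacitly requires the pairs $(p,x_0)$ and $(q,x_1)$ to be in good position --- the flat through $p$ asymptotic to $V_0$ must have backward endpoint in $\Li_{\rho(\G)}$ --- which fails for generic $p,q$. The paper addresses this by proving Proposition \ref{tutti} under a good-position hypothesis and then passing to arbitrary $p,q$ via auxiliary basepoints $p_0,q_0$ on suitable flats, Quint's Lemma \ref{lema:bus}, and the cone comparison of Lemma \ref{lema:novo}; your sketch should include this reduction, as it occupies a substantial part of the proof.
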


A Zariski-dense hyperconvex representation $\rho:\G\to G$ is fixed from now on. In order to simplify notation, we will identify $\G$ with $\rho(\G),$ i.e. if $p\in X$ then $\g p$ means $\rho(\g) p.$ 

For $T\in\R_+,$ define the measure $\LL^T(p,q)$ on $\F\times\F,$ by $$\LL^T(p,q)=e^{-\|\ta\|T}\sum_{\g\in\G:d_X(p,\g q)\leq T} \delta_{\g q}\otimes \delta_{\g^{-1}p}.$$


If $A$ is a subset of $\scr F,$ and $r>0,$ consider the subset $C^+_r(p,A)$ of $X,$ defined by the $r$-neighborhood of $$\{\p(\frak a^+): \p\in G/M,\,d_X(\p(0),p)\leq r,\ \zz(\p)\in A\},$$ and consider the set $C^-_r(p,A)$ defined by $$\{y\in X: B_X(y,r)\subset\bigcap_{\{q\in X:d_X(q,p)\leq r\}}\bigcup_{\{\p\in G/M:\p(0)=q,\ \zz(\p)\in A\}} \p(\frak a^+)\}.$$

The following proposition, is the main step of the proof of Theorem \ref{teo:quiensabe}.

\begin{prop}\label{tutti} Consider $p,q\in X$ and $x,y\in\scr F,$ such that $(p,x)$ and $(q,y)$ are in good position. Then, for every $\eps>0,$ there exists a neighborhood $W$ of $(x,y)$ on $\F\times\F,$ such that for every Borel sets $A,B\subset\scr F$ with $A\times B\subset W,$ one has $$\limsup_{T\to\infty} \LL^T(p,q)(C^-_1(p,A)\times C^-_1(q,B))\leq e^\eps \ca\dens_p(A)\vov\dens_q(B)$$ and $$\liminf_{T\to\infty} \LL^T(p,q)(C^+_1(p,A)\times C^+_1(q,B))\geq e^{-\eps}\ca \dens_p(A)\vov\dens_q(B),$$ for a constant $c>0.$
\end{prop}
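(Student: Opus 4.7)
The strategy, due to Roblin \cite{roblin} and Thirion \cite{thirion1}, is to deduce Proposition \ref{tutti} from Corollary \ref{cor:mixingroblin} by unfolding a Bowen--Margulis integral over a well-chosen flow box into a sum indexed by $\rho(\G)$, and then translating the resulting conditions into shadow conditions. Since $(p,x)$ is in good position, the parametrized flat $\p$ with $\p(0)=p$, $\zz(\p)=x$ has $\wk\zz(\p)=:x^-\in\Li_{\rho(\G)}$; similarly $(q,y)$ yields $y^-\in\Li_{\rho(\G)}$. Working in Hopf's parametrization $G/M\cong\posgen\times\frak a$, I would choose small product boxes
$$\Omega_0=U_0\times A_0\times I\ni(x^-,x,0),\qquad \Omega_1=V_0\times B_0\times J\ni(y^-,y,0),$$
where $A_0\subset A,\ B_0\subset B$, $U_0,V_0$ are neighborhoods of $x^-,y^-$ in $\scr F_{\ii\Pi}$, and $I,J\subset\frak a$ are small cubes around the origin, shrunk so that their projections to $\rho(\G)\backslash G/M$ are injective. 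Take $f_0,f_1\colon\posgen\times\frak a\to\R_+$ to be non-negative continuous inner or outer approximations of $\1_{\Omega_0},\1_{\Omega_1}$, depending on whether the $\liminf$ or the $\limsup$ is sought, and denote by $F_0,F_1$ the induced functions on $\rho(\G)\backslash G/M$.

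Using $\rho(\G)$-invariance of $\w{\Pat_\ta}=e^{-\ta[\cdot,\cdot]}\mu\otimes\mu\otimes\Leb_{\frak a}$ and the action $\g(x',y',v)=(\g x',\g y',v-\vect_\Pi(\g,y'))$, a fundamental-domain manipulation unfolds the left-hand side of Corollary \ref{cor:mixingroblin} as
$$e^{-\|\ta\|T}\int_{B(0,T)\cap\frak a^+}e^{\ta(u)}\sum_{\g\in\G}\int_{\posgen\times\frak a}f_0(\xi)\,f_1\bigl(\g\exp(u)\xi\bigr)\,d\w{\Pat_\ta}(\xi)\,d\Leb(u),$$
and the $\g$-th summand is non-zero precisely when $\g x\in B_0$, $\g x^-\in V_0$ and $\vect_\Pi(\g,x)$ lies within $I+J$ of $-u$. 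Lemmas \ref{lema:bus} and \ref{lema:buseman-cartan} identify $\vect_\Pi(\g,x)$ with the Cartan projection $\aa(p,\g^{-1}p)$ up to an $o(1)$ error once $\g x$ lies in a sufficiently narrow shadow of $\g^{-1}p$ from $p$, so the constraint $\|u\|\leq T$ matches, up to bounded perturbation, $d_X(p,\g q)\leq T$. The conditions $\g x\in B_0$, $\g x^-\in V_0$ translate, via the duality $\ii\vect_\Pi=\vo{\vect_\Pi}$ and the correspondence between flat endpoints and shadows, into $\g q\in C^\pm_1(p,A)$ together with $\g^{-1}p\in C^\pm_1(q,B)$, the $\pm$ sign matching inner/outer approximation. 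The weight $e^{\ta(\vect_\Pi(\g,x))}$ is precisely the Radon--Nikodym cocycle of the $\ta$-Patterson--Sullivan equivariance, and Lemma \ref{lema:sombras} (with Remark \ref{obs:punto} to change basepoints) assembles the summed weights into $\mu_p(A)\mu_q(B)$ up to $e^{\pm\eps}$ and the irrelevant box volumes $|I|,|J|$, which cancel against the corresponding factors in $\Pat_\ta(F_0)\Pat_\ta(F_1)$ on the right-hand side of Corollary \ref{cor:mixingroblin}.

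The main technical obstacle is synchronising several simultaneous approximations: (i) the Busemann--Cartan identification of Lemma \ref{lema:bus} is only asymptotic deep inside $\frak a^+$, forcing the sum to be restricted to a regular subcone; (ii) the shadow--vs--Furstenberg comparison of Lemma \ref{lema:sombras} must hold uniformly on a neighborhood of $(x,y)$, which dictates the size of $W$; and (iii) orbit elements whose Cartan projection strays far from the growth direction $\R_+u_{\rho(\G)}$ must be shown negligible, which is exactly the content of Remark \ref{obs:fuera} combined with the strict concavity of $\psi_{\rho(\G)}$ (Theorem \ref{teo:tangente}), and is also what makes the $e^{\ta(u)}$-weighted integral in Corollary \ref{cor:mixingroblin} converge. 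Once these are controlled, taking $f_i$ as outer approximations of $\1_{\Omega_i}$ yields the $e^{\eps}$ upper bound in the $\limsup$, and as inner approximations yields the $e^{-\eps}$ lower bound in the $\liminf$, with a common constant $c$ proportional to the mixing constant $C$ and depending on $p,q$ only through the normalisation of the Bowen--Margulis masses.
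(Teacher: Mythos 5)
Your overall strategy is exactly the paper's: fix flow boxes adapted to the flats through $(p,x)$ and $(q,y)$, feed their (inner/outer approximated) indicators into Corollary~\ref{cor:mixingroblin}, unfold the resulting integral over $\rho(\G)\backslash G/M$ into a sum over $\G$ using the $\rho(\G)$-invariance of $\w{\Pat_\ta}$, translate the non-emptiness of each summand into shadow conditions via Lemmas~\ref{lema:bus}, \ref{lema:buseman-cartan}, \ref{lema:sombras}, and reassemble the weights into the Patterson--Sullivan masses of $A$ and $B$. This is the Roblin--Thirion argument as carried out in the paper (where the flow boxes are the sets $K^+_r(p,A)$ and $K^-_r(q,B)$ and the two-way computation of $\sum_\g\Xi(\g,T)$ plays the role of your unfolding), so I would classify the approach as essentially identical.

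One concrete slip in the bookkeeping: your box $\Omega_1=V_0\times B_0\times J$ places $B_0\subset B$ in the \emph{second} Hopf slot (the $\zz$-coordinate, weighted by $\dens_q$) and a free neighbourhood $V_0$ of $y^-$ in the \emph{first} slot. The set $K^-_r(q,B)$ in the paper restricts the \emph{first} slot, $\wk\zz(\p)\in B$, which is weighted by $\vov\dens_q$. This asymmetry between $K^+$ and $K^-$ is not an accident: the flat witnessing $\g q\in C^+_1(p,A)$ also witnesses $\g^{-1}p\in C^+_1(q,B)$ only after you reverse it, and reversing exchanges $\zz$ and $\wk\zz$; equivalently, $\aa$ satisfies equation~(\ref{equation:iii}) with the opposition involution $\ii$, which pairs $\dens_q$ with $\vov\dens_q$. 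As written your box produces $\dens_q(B)$ on the right-hand side instead of $\vov\dens_q(B)$, which is the quantity the Proposition (and the downstream Theorem~\ref{teo:quiensabe}, where $\delta_{\g q}\otimes\delta_{\g^{-1}p}$ must converge to $\mu_p\otimes\mu_q$ rather than $\mu_p\otimes\vov\mu_q$) actually requires. The fix is to swap the roles: constrain the $\wk\zz$-coordinate of $\Omega_1$ by $B$ and leave the $\zz$-coordinate free near $\zz(\p_{q,y})$. A smaller remark: the regular-cone reduction you list as obstacle (iii), resting on Remark~\ref{obs:fuera} and Theorem~\ref{teo:tangente}, is really used in passing from Proposition~\ref{tutti} to Theorem~\ref{teo:quiensabe} (Lemmas~\ref{lema:afuera} and~\ref{lema:novo}); inside Proposition~\ref{tutti} the regularity needed for Lemma~\ref{lema:buseman-cartan} is already encoded in the definition of the shadows, and the radial concentration is handled inside the proof of Corollary~\ref{cor:mixingroblin}.
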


\begin{proof} For a maximal flat $\mm\p$ and $p\in X,$ denote by $\p^p$ the parametrized flat such that $[\p^p]=[\p],$ and such that $\p^p(0)$ is the orthogonal projection of $p$ on $\mm\p.$ If $v\in\frak a,$ denote by $\tran_v:\frak a\to\frak a$ the translation by $v,$ i.e. $\tran_v(u)=u+v.$ For a given subset $A$ of $\scr F,$ consider the subsets of $G/M$ defined by $$K_r^+(p,A)=\{\p^p\circ\tran_v:v\in B_\frak a(0,r),\ d_X(\p^p(0),p)\leq r,\ \zz(\p)\in A\},$$ and $$K_r^-(p,A)=\{\p^p\circ\tran_v:v\in B_\frak a(0,r),\ d_X(\p^p(0),p)\leq r,\ \wk\zz(\p)\in A \},$$ where $B_{\frak a}(0,r)$ is the ball on $\frak a$ of radius $r,$ centered at $0,$ for the euclidean norm $\|\cdot\|.$

Denote by $V_x$ and $V_y$ neighborhoods on $\F$ of $x$ and $y$ respectively, given by Lemma \ref{lema:sombras} for $(p,x)$ and $(q,y).$ We can assume that Lemma \ref{lema:buseman-cartan} holds, for $p$ and every point of $V_x\cap X,$ and for $q$ and every point of $ V_y\cap X.$ We will show that $V_x\times V_y$ is the desired neighborhood. Consider then $A,B$ Borel subsets of $\scr F$ such that $A\times B\subset V_x\times V_y.$ Let us simplify notation and denote by $K^+=K_r^+(p,A)$ and $K^-=K^-_r(q,B).$

Given $\g\in\G$ and $T>0,$ define $\Xi(\g,T)$ by $$\Xi(\g,T)=\int_{B_\frak a(0,T)\cap\frak a^+}e^{\ta(v)}\BM(K^+\cdot \exp(v)\cap \g\cdot K^-)d\Leb_{\frak a}(v).$$ Following Roblin's \cite{roblin} method (see also Thirion \cite{thirion1}), we will compute $$e^{-\|\ta\|T}\sum_{\g\in\G}\Xi(\g,T),$$ in two different ways. Observe first that Corollary \ref{cor:mixingroblin} gives $$e^{-\|\ta\|T }\sum_{\g\in\G}\Xi(\g,T)\to \ca \BM(K^+)\BM(K^-),$$ as $T\to\infty,$ for a constant $c>0.$ Let's compute then $\BM(K^+)$ and $\BM(K^-).$ Remark \ref{obs:punto} states that $$\BM=e^{-{\Gromov\cdot\cdot}_p} \vov\dens_p\otimes \dens_p\otimes\Leb_{\frak a},$$ hence $$\BM(K^+)=\int_{\frak a}\int_A\int_{\cal O_r(z,p)}e^{-{\Gromov wz}_p}\1_{B_\frak a(0,r)}(v)d\vov\dens_p(w)d\dens_p(z)d\Leb_\frak a(v).$$ 

\noindent
Since $w\in \cal O_r(z,p),$ Remark \ref{obs:gromov0} implies that $e^{-{\Gromov wz}_p}\simbolo\eps1,$ and thus $$\BM(K^+)\simbolo{\eps}\int_{\frak a}\int_A\vov\dens_p(\cal O_r(z,p))\1_{B(0,r)}(v)d\dens_p(z)d\Leb_\frak a(v)$$ $$=r^{\rank(G)}\int_A\vov\dens_p(\cal O_r(z,p))d\dens_p(z).$$ Since $z\in A\subset V_x,$ Lemma \ref{lema:sombras} applied to the pair $(p,x),$ implies that $\vov\dens_p(\cal O_r(z,p))\simbolo\eps\vov\dens_p(\cal O_r(x,p)),$ and hence $$\BM(K^+)\simbolo{2\eps} r^{\rank(G)}\vov\dens_p(\cal O_r(x,p))\dens_p(A).$$ Analogous reasoning, using the equality $\BM=e^{-{\Gromov\cdot\cdot}_q} \vov\dens_q\otimes \dens_q\otimes\Leb_{\frak a},$ gives $\BM(K^-)\simbolo\eps r^{\rank(G)}\dens_q(\cal O_r(y,q))\vov\dens_q(B).$ Hence, if we denote by $${\sf H}=r^{2\rank(G)}\dens_q(\cal O_r(y,q))\vov\dens_p(\cal O_r(x,p)),$$ one has \begin{equation}\label{equation:mix}e^{-\|\ta\|T} \sum_{\g\in\G} \Xi(\g,T) \simbolo{4\eps}\ca\dens_p(A)\vov\dens_q(B){\sf H},\end{equation} for all big enough $T.$ Remark that, since $(p,x)$ and $(q,y)$ are in good position, one has ${\sf H}\neq0.$ This will allow us later to divide by ${\sf H}.$

We will now explicitly compute $\sum_{\g\in\G}\Xi(\g,T).$ Observe that, if $d_X( p,\g q)\geq T-r$ then, for all $v\in B_\frak a(0,T)\cap \frak a^+,$ one has $$K^+\cdot\exp(v)\cap \gamma\cdot K^-=\vacio,$$ and hence $\Xi(\g,T)=0.$ Thus, $$\sum_{\g\in\G}\Xi(\g,T-r)=\sum_{\g\in\G: d_X(p,\g q)\leq T}\Xi(\g,T-r)$$

\begin{afi}\label{afi:hecho} Denote by $V_A^+=C^+_r(p,A)\cap V_x$ and $V^+_B=C^+_r(q,B)\cap V_y,$ then there exist constants $C$ and $K,$ such that for all big enough $T$ one has $$\sum_{\g\in\G}\Xi(\g,T-r)\leq C+ e^{K\eps}{\sf H}\sum \1_{V_A^+}(\g q)\1_{V_B^+}(\g^{-1} p),$$ where the sum is over all $\g\in\G$ such that, $d_X(p,\g q)\leq T$ and $(\g q,\g^{-1}p)\in  C^+_r(p,A)\times C^+_r(q,B).$ Moreover, define by $V_A^-=C^-_r(p,A)\cap V_x$ and $V^-_B=C^-_r(q,B)\cap V_y,$ then $$\sum_{\g\in\G} \Xi(\g,T+r)\geq -C+e^{-K\eps}{\sf H}\sum \1_{V_A^-}(\g q)\1_{V_B^-}(\g^{-1} p),$$ where the sum is over all $\g\in\G$ such that, $d_X(p,\g q)\leq T$ and $(\g q,\g^{-1}p)\in  C^+_r(p,A)\times C^+_r(q,B).$
\end{afi}

\begin{proof} We will only show the upper bound (the lower bound being analogous). Observe that, if for some $\g\in\G$ one has $\Xi(\g,T-r)\neq0,$ then $K^+\cdot\exp(v)\cap \gamma\cdot K^-\neq\vacio,$ for some $v\in B_{\frak a}(0,T)\cap\frak a^+.$ This intersection is contained in $$(\cal O^+_r(\g q,p)\cap \g B)\times (\cal O_r^+(p,\g q)\cap A)\times\frak a,$$ and necessarily one has:\begin{enumerate}\item[i)] $v\in B_\frak a(\aa(p,\g q),r),$ i.e. $d_X(p,\g q)\leq T,$ and \item[ii)] $(\g q,\g^{-1}p)\in C^+_1(p,A)\times C^+_1(q,B).$ \end{enumerate}

Using the explicit definition of $\BM,$ one obtains $\sum_{\g\in\G}\Xi(\g,T)\leq $ $$C+ e^{K_0\eps}r^{2\rank(G)}\sum e^{\ta(\aa(p,\g q))}\vov\dens_p(\cal O^+_r(\g q,p))\dens_p(\cal O_r^+(p,\g q))\1_{V_A}(\g q)\1_{V_B}(\g^{-1} p),$$ where the sum is over all $\g\in\G$ that verify i) and ii) above, and $C$ is a constant independent on $T,$ determined by the (finitely many) $\g\in\G$ such that $$(\g q,\g^{-1}p)\in C^+_r(p,A)\times C^+_r(q,B)-V_x\times V_y.$$ Since $\g q\in V_x$ one has $\vov\dens_p(\cal O^+_r(\g q,p))\simbolo\eps\vov\dens_p(\cal O_r(x,p)),$ and the last equation becomes $$C+ e^{K_0\eps}r^{2\rank(G)}\vov\dens_p(\cal O_r(x,p))\sum e^{\ta(\aa(p,\g q))}\dens_p(\cal O_r^+(p,\g q))\1_{V_A}(\g q)\1_{V_B}(\g^{-1} p).$$

Using Lemma \ref{lema:buseman-cartan}, and the fact that $\g q$ belongs to $V_A^+\subset V_x,$ one obtains $$e^{\ta(\aa(p,\g q))}\simbolo{\eps}e^{\ta(\buss_z(p,\g q))},$$ for any $z\in \cal O_r^+(p, \g q).$  Applying the definition of $\{\dens_m\}_{m\in X},$ one has that  $$e^{\ta(\aa(p,\g q))}\dens_p(\cal O_r^+(p,\g q))\simbolo{\eps}e^{\ta(\buss_z(p,\g q))}\dens_p(\cal O_r^+(p,\g q))\simbolo\eps\dens_{\g q}(\cal O_r^+(p,\g q))$$ $$=\dens_q(\cal O_r^+(\g^{-1}p, q))\simbolo{\eps}\dens_q(\cal O^+_r(y,q)),$$ since $\g^{-1} p\in V^+_B \subset V_y.$ Hence, $$\sum_{\g\in\G}\Xi(\g,T-r) \leq C+ e^{K\eps}{\sf H} \sum \1_{V_A^+}(\g q)\1_{V_B^+}(\g^{-1} p),$$ where the sum is over all $\g\in\G$ that verify i) and ii) above. This finishes the proof of the claim.
\end{proof}

The proof of the proposition will be completed when we compute $$e^{-\|\ta\|T}\sum_{\g\in\G}\Xi(\g,T),$$ assembling equation (\ref{equation:mix}) and Claim \ref{afi:hecho}. For all big enough $T,$ one has $$e^{-4\eps}c\dens_p(A) \vov\dens_q(B){\sf H}\leq e^{-\|\ta\|T}\sum_{\g\in\G}\Xi(\g,T)\leq$$  $$e^{-\|\ta\|T}(C+e^{K\eps}{\sf H}\sum \1_{V_A^+}(\g q)\1_{V_B^+}(\g^{-1} p)),$$ for some $K$ and $C$ independent of $T,$ where the sum is  over all $\g\in\G$ that verify i) and ii) above. Since $C$ is independent of $T$ and since ${\sf H}\neq0,$ one obtains  $$\liminf_{T\to\infty} \LL^T(p,q)(C^+_1(p,A)\times C^+_1(q,B))\geq e^{-K_0\eps}\ca \dens_p(A)\vov{\dens}_q(B).$$ The other inequality follows similarly.
\end{proof}

We can now prove Theorem \ref{teo:quiensabe}. For $\scr A\subset\frak a^+,$ define the measure $\LL^T(p,q,\scr A)$ on $\F\times\F,$ by $$\LL^T(p,q,\scr A)=e^{-\|\ta\|T}\sum_{\g\in\G: \aa(p,\g q)\in\scr A,\; d_X(p,\g q)\leq T} \delta_{\g q}\otimes \delta_{\g^{-1}p}.$$ Remark that $\LL^T(p,q)=\LL^T(p,q,\frak a^+).$

We will need the following lemma.

\begin{lema}\label{lema:afuera} Let $\grupo$ be a Zariski-dense subgroup of $G.$ Consider a continuous function $f:\F\times\F\to\R,$ and an open cone $\scr C$ with $u_\grupo\in\scr C.$ Then $$e^{-h_\grupo t}\sum f(g q,g^{-1}p)\to0$$ as $t\to\infty,$ where the sum is over all $g\in\grupo$ such that  $d_X(p,gq)\leq t$ and $\aa(p,g q)\notin\scr C.$
\end{lema}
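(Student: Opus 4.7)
The plan is to reduce the statement to the purely asymptotic information contained in Remark \ref{obs:fuera}, by a trivial sup-norm estimate on $f$ and a cone-comparison between $\aa(p,g\cdot q)$ and $a(g)$.

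First, since $\F\times\F$ is compact and $f$ is continuous, there is a constant $M$ with $|f|\leq M$. Hence
\[
\Bigl|\,e^{-h_\grupo t}\sum f(g\cdot q,g^{-1}\cdot p)\Bigr|
\;\leq\; M\,e^{-h_\grupo t}\,\#\bigl\{g\in\grupo : d_X(p,g\cdot q)\leq t,\ \aa(p,g\cdot q)\notin\scr C\bigr\}.
\]
So it suffices to show that the cardinality on the right grows at an exponential rate strictly smaller than $h_\grupo$.

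Next, I would translate the condition from $\aa(p,g\cdot q)$ to $a(g)$. By Remark \ref{obs:asimpt}, there exists $K>0$ (depending only on $p,q$) such that $\|\aa(p,g\cdot q)-a(g)\|\leq K$ for every $g\in G$. Because $\scr C$ is an open cone containing $u_\grupo$, one can choose a slightly smaller open cone $\scr C'\subset\scr C$ still containing $u_\grupo$, together with $R_0>0$, such that every $v\in\frak a^+$ with $\|v\|\geq R_0$ and $v\in\scr C'$ satisfies $B_{\frak a}(v,K)\subset\scr C$. Consequently, for $t$ large enough, $\aa(p,g\cdot q)\notin\scr C$ forces $a(g)\notin\scr C'$; and $d_X(p,g\cdot q)\leq t$ gives $\|a(g)\|\leq t+K$. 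Thus
\[
\#\bigl\{g\in\grupo : d_X(p,g\cdot q)\leq t,\ \aa(p,g\cdot q)\notin\scr C\bigr\}
\;\leq\; \#\bigl\{g\in\grupo : \|a(g)\|\leq t+K,\ a(g)\notin\scr C'\bigr\}+C_0,
\]
where $C_0$ absorbs the finitely many small-norm contributions.

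Finally, apply Remark \ref{obs:fuera} to the open cone $\scr C'$ containing $u_\grupo$: the exponential growth rate of the right-hand side is some $h'<h_\grupo$. Hence for any $\eps>0$ with $h'+\eps<h_\grupo$ there is a constant $C_1$ with
\[
\#\bigl\{g\in\grupo : d_X(p,g\cdot q)\leq t,\ \aa(p,g\cdot q)\notin\scr C\bigr\}
\;\leq\; C_1\,e^{(h'+\eps)(t+K)},
\]
and therefore $e^{-h_\grupo t}M\cdot C_1 e^{(h'+\eps)(t+K)}\to 0$ as $t\to\infty$, which concludes the argument. The main (very mild) point is the cone-shrinking step, which is needed because $\aa(p,g\cdot q)$ only approximates $a(g)$ up to a bounded error and we must keep $u_\grupo$ inside the shrunken cone in order to invoke Remark \ref{obs:fuera}.
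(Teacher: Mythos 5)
Your proof is correct and follows exactly the route the paper intends: the paper simply says the lemma ``follows directly from Remark \ref{obs:fuera}, together with Remark \ref{obs:asimpt},'' and your argument is precisely the honest unpacking of that, including the necessary cone-shrinking step to pass from $\aa(p,g\cdot q)$ to $a(g)$ up to the bounded error $K$.
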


\begin{proof}The lemma follows directly from Remark \ref{obs:fuera}, together with Remark \ref{obs:asimpt}.
\end{proof}

Consequently one has the following.
\begin{samepage}
\begin{lema}\label{lema:novo}  Let $\scr C\subset \frak a^+$ be an open cone with $u_\G\in \scr C,$ then $$\LL^T(p,q,\scr C)-\LL^T(p,q)\to0,$$ for the weak-star convergence on $C(\F\times\F),$ as $T\to\infty.$
\end{lema}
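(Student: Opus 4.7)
The plan is to reduce the statement directly to Lemma \ref{lema:afuera}, which was recorded precisely to handle contributions coming from directions far from the growth direction $u_\G$. Fix an arbitrary test function $f\in C(\F\times\F)$. Since $\aa(p,\g q)\in\frak a^+$ for every $\g$, the atoms of $\LL^T(p,q)$ that fail to appear in $\LL^T(p,q,\scr C)$ are exactly those indexed by $\g\in\G$ with $d_X(p,\g q)\leq T$ and $\aa(p,\g q)\in\frak a^+\setminus\scr C$. Thus
\[
\int f\,d\LL^T(p,q)-\int f\,d\LL^T(p,q,\scr C)
= e^{-\|\ta\|T}\sum f(\g q,\g^{-1}p),
\]
where the sum runs over all $\g\in\G$ with $d_X(p,\g q)\leq T$ and $\aa(p,\g q)\notin\scr C$.

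Next I invoke Lemma \ref{lema:afuera}, applied to the subgroup $\rho(\G)$ of $G$ (which is Zariski-dense by hypothesis), the continuous function $f$, and the open cone $\scr C$. The hypothesis $u_{\rho(\G)}\in\scr C$ is precisely the one assumed here (under the identification of $\G$ with $\rho(\G)$), and we have $h_{\rho(\G)}=\|\ta_\rho\|=\|\ta\|$ by equation \eqref{equation:normaexp}. The conclusion of Lemma \ref{lema:afuera} is that the right-hand side of the displayed identity tends to $0$ as $T\to\infty$. Since $f\in C(\F\times\F)$ was arbitrary, this establishes the weak-star convergence $\LL^T(p,q,\scr C)-\LL^T(p,q)\to 0$.

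There is no real obstacle: the lemma is essentially a bookkeeping statement that lets us, in the proof of the main theorem, freely replace the full counting measure $\LL^T(p,q)$ by its restriction to any open cone containing the growth direction. The only thing worth remarking is that Lemma \ref{lema:afuera} rests on Remarks \ref{obs:fuera} and \ref{obs:asimpt}: the first says that elements $g\in\rho(\G)$ with $a(g)$ outside an open cone containing $u_{\rho(\G)}$ have exponential growth rate strictly less than $h_{\rho(\G)}$, and the second says that $\|\aa(p,gq)-a(g)\|$ is uniformly bounded, so the two conditions $\aa(p,\g q)\notin\scr C$ and $a(\rho(\g))\notin\scr C'$ (for a slightly smaller open cone $\scr C'$ still containing $u_{\rho(\G)}$) are equivalent up to a bounded correction. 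Consequently, one may freely pre-multiply by the bounded function $f$ and the exponential cancellation $e^{-\|\ta\|T}$ still wins over the number of relevant terms.
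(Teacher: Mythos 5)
Your proof is correct and is exactly the intended argument: the paper itself marks Lemma \ref{lema:novo} with a bare $\square$ immediately after Lemma \ref{lema:afuera}, so the entire content is the observation you spell out — that for any $f\in C(\F\times\F)$, $\int f\,d\LL^T(p,q)-\int f\,d\LL^T(p,q,\scr C)$ is precisely the sum in Lemma \ref{lema:afuera} with $h_\grupo=\|\ta\|$, hence tends to $0$. Nothing to add.
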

\begin{flushright}$\square$\end{flushright}
\end{samepage}

\begin{proof}[Proof of Theorem \ref{teo:quiensabe}] It remains to overpass the good position hypothesis on Proposition \ref{tutti}.

Remark that if $x\in\scr F,$ then one can choose $z\in\bord\G$ such that $(x,\z(z))\in\posgen.$ Fix then $(x,y)$ in $\scr F\times\scr F,$ and consider $(z,w)\in\z(\bord\G)^2$ such that $(x,z)$ and $(y,w)$ belong to $\posgen.$ Choosing $p_0$ on the maximal flat determined by $(x,z),$ and $q_0$ on the maximal flat determined by $(y,w),$ one gets that $(p_0,x)$ and $(q_0,y)$ are both in good position. 

Applying Proposition \ref{tutti} to the pairs $(p_0,x)$ and $(q_0,y),$ and a given $\eps>0,$ one obtains a neighborhood $W$ of $(x,y)\in\F^2$ such that if $A\times B$ is a Borel set contained in $\scr F^2\cap W,$ then \begin{equation}\label{equation:tutti2}\liminf_{T\to\infty} \LL^T(p_0,q_0)(C^+_1(p_0,A)\times C^+_1(q_0,B)) \geq e^{-K_0\eps}\ca \dens_{p_0}(A)\vov{\dens}_{q_0}(B).\end{equation}

If $W$ is small enough, then for every $(s,t)\in X^2\cap W,$ Quint's Lemma \ref{lema:bus} implies that $$\|\aa(p_0,s)-\aa(p,s)-\buss_x(p_0,p)\|\leq \eps,$$ and $$\|\aa(q_0,t)-\aa(q,t)-\buss_y(q_0,q)\|\leq \eps.$$

Discarding finitely many $\g\in\G,$ we can assume that if $\g q_0\in  C^+_1(p_0,A)$ and $\g^{-1}p\in C^+_1(q,B),$ then $(\g q_0,\g^{-1} p)\in  W.$ The last inequalities then give $$\|\aa(p_0,\g q_0)-\aa(p,\g q_0)-\buss_x(p_0,p)\|\leq \eps$$ and  $$\|\aa(q_0,\g^{-1}p)-\aa(q,\g^{-1}p)-\buss_y(q_0,q)\|\leq \eps.$$

Equation (\ref{equation:iii}), and $G$-invariance of $\aa,$ implies that $\aa(q_0,\g^{-1}p)=\ii(\aa(p,\g q_0)),$ and since $\ii^2=\id$ one has $$\|\aa(p,\g q_0)-\aa(p,\g q)-\ii\buss_y(q_0,q)\|<\eps.$$

\noindent
Consequently, $$\|\aa(p_0,\g q_0)-\aa(p,\g q)-(\buss_x(p_0,p)+\ii\buss_y(q_0,q))\|\leq 2\eps.$$ Hence, $$\ta(\aa(p_0,\g q_0))\leq \ta(\aa(p,\g q))+\ta (\buss_x(p_0,p) +\ii \buss_y(q_0,q)) +\delta,$$ for some $\delta$ ($\ta$ is continuous at 0).

Recall that if $v\in \R\cdot u_{\rho(\G)}$ then $|\ta(v)|=\|\ta\|\|v\|$ (Remark \ref{obs:ortogonal}). Consider then a closed cone $\scr C,$ with $u_{\rho(\G)}\in\inte\scr C,$ such that for all $v\in\scr C$ one has $$\ta(v)\simbolo\eps\|\ta\|\|v\|.$$ Remark that, since $\aa(p_0,\g q_0)$ is at bounded distance from $\aa(p,\g q),$ we can consider an open cone $\scr C'$ with $u_{\rho(\G)}\in\scr C',$ such that if $\g$ is big enough, and $\aa(p,\g q)\in\scr C',$ then $\aa(p_0,\g q_0)\in\scr C.$ Hence, for all big enough $\g\in\G$ such that $\aa(p,\g q)\in \scr C'$ one has $$d_X(p_0,\g q_0)\leq d_X(p,\g q)+\frac1{\|\ta\|}(\ta (\buss_x(p_0,p) +\ii (\buss_y(q_0,q))) +\delta).$$

\noindent Lemma \ref{lema:novo} together with equation (\ref{equation:tutti2}), imply that $$ \liminf_{T\to\infty} \LL^T(p_0,q_0,\scr C)(C^+_r(p_0,A)\times C^+_r(q_0,B)\geq e^{-\eps} \ca\dens_{p_0}(A)\vov\dens_{q_0}(B).$$

\noindent
Denoting by $$T'=T+\frac1{\|\ta\|}\ta(\buss_x(p_0,p)+\ii(\buss_y(q_0,q)))+\delta,$$ one concludes that $\LL^T(p,q)( C^+_r(p,A)\times C^+_r(q,B))\geq \LL^T(p,q,\scr C')(C^+_r(p,A)\times C^+_r(q,B))\geq$ $$ e^{\ta(\buss_x(p_0,p)+ \ii(\buss_y(q_0,q)))+ \delta}\LL^{T'}(p_0,q_0,\scr C)(C^+_r(p_0,A)\times C^+_r(q_0,B)).$$ Thus, $\lim\inf_T \LL^T(p,q)( C^+_r(p,A)\times C^+_r(q,B))\geq$ $$ e^{-\eps} \ca e^{\ta(\buss_x(p_0,p)+\ii\buss_y(q_0,q)))}\dens_{p_0}(A)\vov\dens_{q_0}(B).$$ Finally, by definition of $\{\mu_m\}_{m\in X},$ one has $$e^{\ta(\buss_x(p_0,p))}\dens_{p_0}(A)\simbolo\eps \dens_p(A),$$ and $$e^{\ta(\ii\buss_y(q_0,q))}\vov\dens_{q_0}(B)\simbolo\eps \vov\dens_q(B).$$ One concludes that $$\liminf_{T\to\infty} \LL^T(p,q)(C^+_r(p,A)\times C^+_r(q,B))\geq e^{-\eps} \ca\dens_p(A)\vov\dens_q(B),$$ as desired. The other inequality is analogous, and a standard partition of unity argument finishes the proof of the theorem.
\end{proof}


\bibliography{stage}
\bibliographystyle{plain}

\author{$\ $ \\
Andr\'es Sambarino\\
 Institut de Math\'ematiques de Jussieu\\
Universit\'e Pierre et Marie Curie\\
  4 place Jussieu, 75252 Paris Cedex\\
  \texttt{andres.sambarino@gmail.com}}

\end{document}